\numberwithin{equation}{section}
\newcommand{\comm}[1]{}
\newcommand{\ud}{\,\mathrm{d}}
\def\diam{\operatorname{diam}}
\def\dist{\operatorname{dist}}
\def\ti{\tilde}
\def\({\left(}
\def\){\right)}
\def\oli{\overline}
\def\uli{\underline}
\def\raw{\rightarrow}
\def\no={\neq}
\def\sm{\setminus}
\def\C{{\mathbb C}}
\def\P{{\mathbb P}}
\def\BB{{\mathcal B}}
\def\EE{{\mathcal E}}
\def\OO{{\mathcal O}}
\def\RR{{\mathcal R}}
\def\al{\alpha}
\def\be{\beta}
\def\ga{\gamma}
\def\de{\delta}
\def\vep{\varepsilon}
\def\th{\theta}
\def\ka{\kappa}
\def\la{\lambda}
\def\si{\sigma}
\def\om{\omega}
\def\Ga{\Gamma}
\def\De{\Delta}
\def\La{\Lambda}
\def\Om{\Omega}
\theoremstyle{plain}
\newtheorem{Main}{Theorem}
\newtheorem{Thm}{Theorem}[section]
\newtheorem{Prop}[Thm]{Proposition}
\newtheorem{Lem}[Thm]{Lemma}
\theoremstyle{remark}
\newtheorem{Rem}[Thm]{Remark}
\newtheorem{Def}[Thm]{Definition}
\begin{document}

\title{Slowly recurrent Collet-Eckmann maps on the Riemann sphere}

\author{Magnus Aspenberg}
\address{Department of Mathematics, Lund University, Box 118, 221 00 Lund, Sweden}

\maketitle

\begin{abstract}
  In this paper we study perturbations of rational Collet-Eckmann maps for which the Julia set is the whole sphere, and for which the critical set is allowed to be slowly recurrent. Generically, if each critical point is simple, we show that each such Collet-Eckmann map is a Lebesgue point of Collet-Eckmann maps in the space of rational maps of the same degree $d \geq 2$. The same result holds in each subspace, where we fix the multiplicities of the critical points.
 %The result is a generalisation of earlier papers by the author \cite{MA, MA-Z} and is related to \cite{Tsuji} by  M. Tsuji, for real maps. 
\end{abstract}

\section{Introduction}

The Collet-Eckmann condition stems from J-P. Eckmann and P. Collet in the 1980s \cite{Collet-Eckmann1, Collet-Eckmann2}, and was used to show abundance of chaotic behaviour for certain maps on an interval. Chaotic behaviour of a system is usually associated to the property of sensitive dependence on initial conditions, meaning that two points $x,y$ sufficiently close to each other repel each other under iteration up to some large scale. Hence it is natural that such maps possess some kind of expanding property. A map satisfying the Collet-Eckmann condition is expansive along the forward critical orbit(s), and it turned out to be sufficient for chaotic behaviour in many situations, not only the pioneering case studied by J-P. Eckmann and P. Collet. Shortly after their works, M. Jakobson proved in \cite{MJ} that the set of parameters $a \in (0,2)$ for which $f_a(x) = 1-ax^2$ admits an invariant absolutely continuous measure (acim) has positive Lebesgue measure. A corresponding celebrated result for complex rational maps was obtained by M. Rees in \cite{MR}. These maps also exhibit chaotic behaviour. The existence of an acim describes the typical orbits of a map in a probabilistic way. It does not immediately imply chaotic behaviour, but it is often very closely related to it and with some additional properties (such as expansion, ergodicity, positive entropy etc) this is usually the case.

It was quite early realised that the Collet-Eckmann condition, or even weaker conditions, are sufficient for the existence of an (ergodic) acim, see e.g. \cite{Collet-Eckmann3}, \cite{BC1, BC2}, \cite{Keller}, \cite{Nowicki-Strien}, \cite{BSS}, \cite{Misse-IHES}, \cite{Benedicks-Misse}, \cite{GS2}, \cite{FP2}. In the fundamental papers \cite{BC1,BC2}, M. Benedicks and L. Carleson showed that the Collet-Eckmann condition is satisfied for a set of positive Lebesgue measure in the quadratic family. Despite of the fact that the Collet-Eckmann condition in general is stronger than the existence of an acim, the two conditions are metrically the same in the real quadratic family. This was a deep result by M. Lyubich and A. Avila and C. G. Moreira, see \cite{AM}, \cite{ML}. Conjecturally it holds more generally. In contrast to the chaotic, non-regular (sometimes called stochastic) parameters stands the regular parameters, for which the map has an attracting orbit. These maps were proven to be open and dense in the real case (the famous real Fatou conjecture), \cite{ML2}, \cite{GSW}, \cite{SSK}. The complex Fatou conjecture is still open.
%Hence the Collet-Eckmann condition is closely connected to chaotic behaviour. 

In the complex rational setting, not as much is known. A similar result to the papers \cite{BC1, BC2} was obtained by the author \cite{MA-Z}, improving the result by M. Rees \cite{MR}. Apart from implying the existence of an ergodic acim, the Collet-Eckmann condition induces more nice properties, see e.g. \cite{PRS}, \cite{GS2}. It has geometric implications, and there are several papers studying perturbations of Collet-Eckmann maps (or similar expanding maps); see \cite{FP3}, \cite{GS}, \cite{BC1}, \cite{BC2}, \cite{Tsuji}, \cite{Gao-Shen} for real maps on an interval and families of H\'enon maps, and \cite{MR}, \cite{MA-Z}, \cite{Mih-Gaut-etal},  \cite{GSW-Fine}, \cite{GSW-Struc} in the complex setting. 

%In this paper we consider perturbations of rational Collet-Eckmann maps that have their Julia set equal to the whole sphere.
The result in this paper is related to \cite{Tsuji} (see also \cite{Gao-Shen}) in the complex setting. 
We study perturbations of complex rational Collet-Eckmann maps which have their Julia set equal to the whole sphere, and where the starting map is allowed to be critically slowly {\em recurrent} (see \cite{Tsuji} and \cite{Le-Pr-Sh}).
%In the complex setting, a similar condition is satisfied for the maps studied in \cite{GSW-Fine}.   

Let $Crit$ be the set of critical points for $f$ and let $J(f)$ and $F(f)$ be the Julia set and Fatou set of $f$ respectively. Let $Crit'$ be the set of critical points $c$ such that there are no other critical points in the forward orbit of $c$. Derivatives are always in the spherical metric unless otherwise stated. We write $Df(z) = f'(z)$ as the space derivative throughout the paper. 
\begin{Def}
Let $f$ be a non-hyperbolic rational map without parabolic periodic points. Then $f$ satisfies the {\em Collet-Eckmann condition} (CE), if there exist constants $C > 0$ and $\ga > 0$ such that, for each critical point $c \in Crit' \cap J(f)$, we have
\[
|Df^n(fc)| \geq Ce^{\ga n}, \text{  for all $n \geq 0$}.
\]
\end{Def}
%However, flexible Latt\'es maps are the only CE-maps which are structually stable (\cite{GS}). 

%Latt\'es maps are very special and have Lebesgue measure zero in the space $\RR_d$ of rational functions of a fixed degree $d$, whereas CE-maps maps have positive %Lebesgue measure in $\RR_d$ (see \cite{MR} and \cite{MA}).

Let us define the upper and lower Lyapunov exponents for the critical point $c$ respectively as
\begin{equation}
\uli{\ga}(c) = \liminf\limits_{n \raw \infty} \frac{\log |Df^n(fc)|}{ n}, \quad \text{and} \quad
\oli{\ga}(c) = \limsup\limits_{n \raw \infty} \frac{\log |Df^n(fc)|}{ n}. \nonumber
\end{equation}
Then the CE-condition can be reformulated as the condition that the lower Lyapunov exponent is strictly positive for all critical points $c \in Crit' \cap J(f)$.  We write $\uli{\ga} = \min \uli{\ga}(c)$ where the minimum is taken over all critical points $c \in Crit' \cap J(f)$. 
In this paper we are going to study perturbations of rational CE maps for which the Julia set is the whole sphere, but we expect that the techniques can be used in other situations as well. A critical point $c \in J(f)$ is {\em slowly recurrent}, cf. \cite{Le-Pr-Sh}, if for each $\al > 0$ there is some $C > 0$ such that
\begin{equation} \label{slowrecur}
\text{dist}(f^n(Jrit),Jrit) \geq Ce^{-\alpha n}, \text{ for all $n \geq 0$}.
\end{equation}
We say that $f$ is critically slowly recurrent if all critical points in the Julia set are slowly recurrent. 
Collet-Eckmann maps possess a (unique) conformal measure $\nu$ supported on the Julia set and a unique ergodic invariant measure $\mu$, which is absolutely continuous w.r.t. $\nu$ (e.g. \cite{FP2}, \cite{PRS}, \cite{GS}). If the map $f$ satisfies $J(f) = \hat{\C}$, then $\nu$ is the standard Lebesgue measure and hence for such maps there exists an invariant absolutely continuous measure w.r.t. Lebesgue measure.  We say that the critical points are {\em typical} with respect to this measure if the Birkhoff means converges for all critical points $c \in Jrit$, i.e.,
\[
\frac{1}{n} \sum_{k=0}^{n-1} \varphi(f^k(c)) \raw \int \varphi  \ud \mu,  \quad \text{ as $n \raw \infty$},
\]
for $\varphi \in L^1(\mu)$. Setting $\varphi(z) = \log |Df(z)|$, which belongs to $L^1 (\mu)$ by \cite{PRS}, we see that if the critical points are typical, then $\oli{\ga} = \uli{\ga}$. It follows that the map is slowly recurrent. The condition $\oli{\ga} = \uli{\ga}$ implies that $f$ is slowly recurrent but it is not clear if the converse holds. Conjecturally almost all CE-maps have the slow recurrence property. At least it is true in the real quadratic family (see \cite{AM}).

The space of rational maps of degree $d$ is denoted by $\RR_d$. We will explain later what is meant by a non-degenerate real analytic family in Section \ref{L-parameter-space}. 

\begin{Main} \label{sats1}
Let $f$ be a critically slowly recurrent rational Collet-Eckmann map in $\RR_d$, of degree $d \geq 2$, such that the Julia set is the whole sphere and let $f_a$, $a \in (-\vep,\vep)$ be a non-degenerate real analytic family of maps around $f=f_0$ for some $\vep > 0$. Then $f_0$ is a Lebesgue density point of Collet-Eckmann maps in $(-\vep,\vep)$. 
\end{Main}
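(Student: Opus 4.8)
The plan is to prove Theorem~\ref{sats1} by a Benedicks--Carleson-style induced parameter exclusion: the Collet--Eckmann and slow recurrence properties of $f_0$ feed the beginning of the induction, while the non-degeneracy of the family $f_a$ transports the dynamical expansion along critical orbits to the parameter interval.

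\textbf{Combinatorics and bound periods.} For each critical point $c=c(a)\in J(f_a)$, continued analytically in $a$, write $\xi_n(a)=f_a^n(f_a(c(a)))$. Fix a small scale $\de_0$, let $U$ be the $\de_0$-neighbourhood of $Crit$, and record the itinerary of $\xi_n(a)$ relative to $U$ split into dyadic annuli $A_k=\{z: e^{-k-1}\le \dist(z,Crit)<e^{-k}\}$ covering $U$; call $n$ a return of depth $k$ if $\xi_n(a)\in A_k$. To each return at time $n$ of depth $k$ one attaches a bound period $[n,n+p]$ with $p\asymp k/\uli\ga$, during which $\xi_n(a),\dots,\xi_{n+p}(a)$ shadow an iterate of a critical orbit. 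Using the Collet--Eckmann property of $f_0$ (and, for nearby $a$, its inductively maintained analogue) together with standard Koebe distortion control and the usual bound-period length estimate, the derivative loss over a depth-$k$ bound period is at most $e^{Ck}$, while over the same period the orbit recovers roughly $e^{\uli\ga p}\approx e^{k}$, so the expansion accumulated along free returns still grows exponentially provided returns are neither too frequent nor too deep.

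\textbf{Transfer to parameter space.} Restricting to one critical point and to a parameter interval $\om$ on which $\xi_1,\dots,\xi_n$ share a common itinerary with only admissible returns (depth at step $m$ bounded by $\al m$), I would show --- and this is exactly where non-degeneracy of $f_a$ enters --- that
\[
\frac{\partial}{\partial a}\xi_n(a)\;\asymp\;\bigl|Df_a^{n-1}(\xi_1(a))\bigr| \qquad\text{uniformly for } a\in\om ,
\]
so that the phase map $a\mapsto\xi_n(a)$ has bounded distortion on $\om$ and $|\xi_n(\om)|\asymp |Df^{n-1}|\cdot|\om|$. Non-degeneracy removes the only obstruction to this estimate, namely a coincidence between the direction in which the critical value moves with $a$ and the direction of the dynamics. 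The induction then proceeds from $\om_0=(-r,r)$. Because $f_0$ is slowly recurrent, for every small $\al>0$ there is $C>0$ with $\dist(f_0^k(Crit),Crit)\ge Ce^{-\al k}$; by continuity (the crude bound $|\xi_k(a)-\xi_k(0)|\le \Lambda^k r$, sharpened by the correspondence above) this inequality persists, with a slightly worse constant, for every $a\in\om_0$ and every critical orbit up to a time $N(r)$ with $N(r)\to\infty$ as $r\to0$. Hence no parameter is excluded before time $N(r)$, the Collet--Eckmann estimates of $f_0$ hold uniformly on $\om_0$ until then, and $|\xi_{N(r)}(\om_0)|$ has definite size. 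From time $N(r)$ on, one runs the inductive step: partition each surviving parameter piece at its essential returns and discard the sub-pieces whose return depth exceeds the admissible threshold $\al n$. The large-deviation estimate governing how often and how deeply orbits return --- the same mechanism behind slow recurrence --- bounds the proportion of $\om_0$ deleted at step $n$ by $e^{-\eta n}$ for some $\eta=\eta(\al)>0$; summing over deletion times $\ge N(r)$, the total deleted proportion is at most $\sum_{n\ge N(r)}e^{-\eta n}\to 0$ as $r\to0$. Every surviving parameter has only admissible returns, so the free-return expansion beats the bound-period losses and yields $|Df_a^n(f_a c)|\ge Ce^{\ga' n}$ for all $n$ and all $c\in Crit'\cap J(f_a)$ (in particular such $f_a$ has no attracting or parabolic cycles), so $f_a$ is Collet--Eckmann. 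Therefore $|\{a\in(-r,r): f_a \text{ is CE}\}|/(2r)\to 1$, i.e.\ $f_0$ is a Lebesgue density point of Collet--Eckmann maps.

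\textbf{Main obstacle.} I expect the hard part to be the derivative correspondence $\partial_a\xi_n\asymp|Df^{n-1}|$: since the recurrence of $f_0$ is only slow (not, say, Misiurewicz), bound periods can be long, and one must control the distortion of the phase map through arbitrarily long bound periods without the constants deteriorating --- a delicate telescoping argument that must additionally cope with several critical points whose bound periods may overlap one another's returns. In the rational setting with $J(f)=\hat\C$, where the one-dimensional real-analytic shortcuts available for the quadratic family are unavailable, establishing this estimate uniformly, and dovetailing it with the large-deviation bookkeeping so that the deleted proportion genuinely tends to $0$ as $r\to0$, is the heart of the argument.
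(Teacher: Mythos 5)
Your proposal follows the same broad Benedicks--Carleson parameter-exclusion strategy as the paper (bound/free periods, partition at essential returns, deletion via the basic approach-rate assumption, and a large-deviation estimate to restore the Lyapunov exponent), so in outline the route coincides. However, there are three points where your sketch either misidentifies where the difficulty lies or leaves a genuine gap.

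First, you implicitly assume uniform expansion outside $U$ ("the expansion accumulated along free returns still grows exponentially provided returns are neither too frequent nor too deep"). For Misiurewicz maps this is available from the orbifold metric, but for a slowly recurrent CE map with $J(f)=\hat\C$ it is not: critical orbits are recurrent, so there is no invariant hyperbolic metric off the postcritical set. The paper devotes Lemma~\ref{oel} to building this expansion out of the \emph{second} Collet--Eckmann condition (\`a~la Graczyk--Smirnov), and it needs the slow recurrence hypothesis in an essential way to repair the estimate $|Df^k(y)|^{\mu_{\max}}\geq Q^k r^{\mu_{\max}-\mu}$ when critical points have unequal multiplicities. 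Without some replacement for Lemma~\ref{oel} the inductive step never gets off the ground; your proposal does not mention the issue.

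Second, you nominate the comparison $\partial_a\xi_n\asymp |Df_a^{n-1}|$ as the heart of the matter and anticipate a "delicate telescoping argument" through long bound periods. In fact the paper obtains this comparison rather cleanly as a consequence of Levin's transversality theorem (Proposition~\ref{levin} and Lemma~\ref{transv1}): the ratio $\xi_{n,l}'(a)/Df_a^{n-1}(v_l(a))$ is a convergent series $\sum_n \partial_a f_a(\xi_n)/Df_a^n(v_l)$ whose limit $L_l$ is nonzero precisely under the non-degeneracy hypothesis, and the tail is controlled once the inductively maintained CE exponent stays above a fixed threshold $\gamma_L$. The real distortion work is elsewhere, in Lemma~\ref{weak-distortion} and the Main Distortion Lemma~\ref{main-distortion}. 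So you have correctly located \emph{where} non-degeneracy is used but not \emph{what} tool provides the estimate, and you have placed the hard analysis in the wrong lemma.

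Third, the multi-critical bookkeeping is not merely "overlapping bound periods": because the bound period of a return for $c_l$ invokes the early orbit of a possibly different critical point $c_i$, one must delete parameters to guarantee the basic assumption for \emph{all} critical points up to a fraction of the current time, and the exclusion sets $\Omega_l(n)$ for different $l$ have incompatible partitions. The paper handles this with the $\EE_{n,l,\star}/\BB_{n,l,\star}$ devices and Benedicks's observation that partition elements deleted at time scale $\sim\hat\alpha n$ (for other critical points) are much larger than the partition elements at time $n$, so the deletion respects the partition structure. Your sketch flags this as a concern but does not supply the scale-separation idea that makes it work. Finally, the paper runs the induction in time windows $[n,2n]$ specifically so that the Lyapunov exponent can be allowed to degrade down to an intermediate level $\gamma_I$ inside a window and then be restored at the end via the large-deviation bound (cf.\ equations (\ref{exp-degradation}) and (\ref{exp-restorage})); your one-shot "sum over $n\geq N(r)$" deletion does not by itself keep the exponent bounded away from zero throughout the construction.
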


We use a normalisation of the space of rational maps following Levin \cite{Levin-book, Levin-multipliers}. We say that two maps $f$ and $g$ are equivalent if they are conjugate by a M\"obius transformation. Then we can consider the space $\La_{d,\oli{p'}} \subset \RR_d$, (see \cite{Levin-book}) up to equivalence, as the set  of rational maps $f$ of degree $d \geq 2$ with precisely $p'$ critical points, i.e. $Crit = \{ c_1, \ldots, c_{p'} \}$, with corresponding multiplicities $\oli{p'} = \{m_1, \ldots, m_{p'} \}$ (in the same order). 
%normalised in a certain way. 
%\[
%f(z) = \si z + b + \frac{P(z)}{Q(z)}, 
%\]
%where $\si \neq 0$, and $\deg(P) \leq d-2$, $\deg(Q) \leq d-1$ and where $P$ and $Q$ have no common zeros, (see \cite{Levin-book} for details).
%By Proposition 8 in \cite{Levin-book}, every $f \in \RR_d$ is conjugate by a M\"obius transformation to some $\ti{f} \in \La_{d,\oli{p'}}$. 
%So we can view $\La_{d,\oli{p'}}$ as an equivalence class of rational maps in $\RR_d$.
Inside such a set, we may state the following theorem as a direct consequence of Theorem \ref{sats1}, by Fubini's Theorem, since the set of non-degenerate directions has measure zero (this also follows from the results of G. Levin, see Section \ref{L-parameter-space}). 
%For each family $f_a$ above we can associate a direction which is a tangent vector in $\RR_d$. It turns out that the set of degenerate %directions has Lebesgue measure zero by a result by G. Levin. So by Fubini's theorem we immediately get the following: 
\begin{Main} \label{sats2}
Let $f$ be a critically slowly recurrent rational Collet-Eckmann map in $\La_{d,\oli{p'}} \subset \RR_d$ of degree $d \geq 2$ such that the Julia set is the whole sphere. Then $f$ is a Lebesgue density point of Collet-Eckmann maps in $\La_{d,\oli{p'}}$. 
\end{Main}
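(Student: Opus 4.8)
The plan is to derive Theorem \ref{sats2} from Theorem \ref{sats1} by a standard Fubini slicing argument, so the work is essentially bookkeeping about the structure of the parameter space $\La_{d,\oli{p'}}$ and the role of non-degenerate directions. First I would recall, from the discussion in Section \ref{L-parameter-space}, that $\La_{d,\oli{p'}}$ (modulo M\"obius equivalence) is a real-analytic (indeed complex-analytic) manifold of some dimension $N$, and that $f$, being a critically slowly recurrent Collet-Eckmann map with $J(f) = \hat{\C}$, sits in this manifold as an interior point. The key input is Levin's result (cited via \cite{Levin-book, Levin-multipliers}) that the set of \emph{degenerate} analytic directions through $f$ — those directions $a \mapsto f_a$ for which the transversality/non-degeneracy condition used in Theorem \ref{sats1} fails — is contained in a proper real-analytic subvariety of the Grassmannian of directions, hence has measure zero; equivalently, for Lebesgue-a.e.\ affine line $\ell$ through a neighbourhood of $f$ in $\La_{d,\oli{p'}}$, the restricted family $f_a$, $a \in \ell$, is a non-degenerate real-analytic family in the sense of Theorem \ref{sats1}.

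The main step is then the Fubini argument. Fix a small coordinate ball $B \subset \La_{d,\oli{p'}}$ around $f$, and let $\mathrm{CE} \subset B$ denote the set of Collet-Eckmann parameters. I want to show that $f$ is a Lebesgue density point of $\mathrm{CE}$, i.e.\ $|B_r(f) \setminus \mathrm{CE}| / |B_r(f)| \to 0$ as $r \to 0$. Foliate $B$ by the family of parallel affine lines in a fixed generic direction $v$; by Levin's statement, for a full-measure set of directions $v$ every line in the corresponding foliation (in particular the one through $f$, after a further a.e.\ choice of basepoint) gives a non-degenerate real-analytic family, so Theorem \ref{sats1} applies on each such line $\ell$ and yields that $\mathrm{CE} \cap \ell$ has full density at its points — and in particular, since the complement $(\ell \cap B) \setminus \mathrm{CE}$ has one-dimensional measure zero in a neighbourhood of $f \cap \ell$, one gets that the one-dimensional density of the complement is $0$ along a.e.\ line. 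Integrating the indicator of $B_r(f) \setminus \mathrm{CE}$ first along the lines of the foliation and then across, using Fubini and dominated convergence, forces the $N$-dimensional density of the complement at $f$ to be $0$. A minor point requiring care is that one should average over a family (a full-measure set) of foliation directions, or equivalently apply Fubini in each of finitely many coordinate directions spanning the generic locus, to conclude the genuine $N$-dimensional density statement rather than merely a directional one.

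The place where I would expect to have to be careful — the ``main obstacle'' in an otherwise routine deduction — is matching the hypothesis of Theorem \ref{sats1} (a \emph{one-parameter} non-degenerate real-analytic family $f_a$, $a \in (-\vep, \vep)$, with density of CE parameters at $a = 0$) with what Fubini actually delivers: density along almost every line must be upgraded, uniformly enough, to density in the ambient ball. Concretely, Theorem \ref{sats1} as stated gives, for each admissible line $\ell$, that $f$ is a one-dimensional Lebesgue density point of $\mathrm{CE} \cap \ell$; to feed this into Fubini one needs either a quantitative (uniform in $\ell$) version of that density statement near $f$, or — more cleanly — to observe that the \emph{bad} set $B \setminus \mathrm{CE}$ restricted to a.e.\ line has linear measure zero in a fixed neighbourhood of $f$, which already suffices: by Fubini the bad set then has $N$-dimensional measure zero in that neighbourhood, so $f$ is trivially a density point. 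I would adopt this second, cleaner route, provided Theorem \ref{sats1} is read as asserting full density (hence, on real-analytic lines, a neighbourhood being CE up to measure zero); if instead only strict-positive lower density is available, the uniform estimate becomes genuinely necessary and one must track the constants in the proof of Theorem \ref{sats1}. Finally, I would note that the reduction also needs the elementary fact that the set of non-degenerate directions being measure zero (so its complement is full) is exactly what makes ``a.e.\ line is non-degenerate'' legitimate — this is the sentence ``since the set of non-degenerate directions has measure zero'' in the excerpt, which I would expand to reference the precise statement from Section \ref{L-parameter-space}.
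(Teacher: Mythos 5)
The proposal identifies the right tools (Levin's transversality, Fubini) but the actual slicing argument has a genuine gap. You foliate $B$ by \emph{parallel} affine lines in a generic direction $v$. The issue is that Theorem~\ref{sats1} is a statement about a non-degenerate one-parameter family $f_a$, $a\in(-\vep,\vep)$, \emph{around a known slowly recurrent Collet-Eckmann map} $f_0$; it yields one-dimensional density of CE parameters at $a=0$, and nowhere else. In a parallel foliation only the single line through $f$ passes through such a basepoint. On every other leaf $\ell_w$ of the foliation you have no distinguished slowly recurrent CE map to anchor Theorem~\ref{sats1}, and hence no bound at all on the linear measure of $(\ell_w\cap B_r(f))\setminus\mathrm{CE}$ beyond the trivial one. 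Your ``cleaner route'' --- that $(\ell\cap B)\setminus\mathrm{CE}$ has linear measure zero in a fixed neighbourhood of $f$ --- would repair this, but it misreads Theorem~\ref{sats1}: being a Lebesgue density point of $\mathrm{CE}$ is strictly weaker than the complement having measure zero in a neighbourhood, and Benedicks--Carleson exclusion arguments of the type used here do \emph{not} deliver a full-measure neighbourhood of CE parameters (indeed, hyperbolic and other non-CE parameters generally persist with positive measure arbitrarily close to $f_0$).

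The correct Fubini slicing is \emph{radial}: write the ball in polar coordinates centred at $f$, so $|B_r(f)\setminus\mathrm{CE}| = \int_{S^{N-1}}\int_0^r \mathbf{1}_{\mathrm{bad}}(f+\rho v)\,\rho^{N-1}\,d\rho\,dv$. Every slicing line now passes through $f$, so Theorem~\ref{sats1} applies along a.e.\ direction $v$ (Levin's result says the degenerate directions form a proper analytic subvariety of $\P(\La_{d,\oli{p'}})$, hence a null set --- note the paper's phrase ``the set of non-degenerate directions has measure zero'' is a typo for ``degenerate,'' which you repeat uncritically in your final sentence). One then checks the elementary fact that if $\tfrac1r\int_0^r\mathbf{1}_{\mathrm{bad}}(f+\rho v)\,d\rho\to 0$ then also $\tfrac{N}{r^N}\int_0^r\mathbf{1}_{\mathrm{bad}}(f+\rho v)\,\rho^{N-1}\,d\rho\to 0$, and concludes by dominated convergence over $v\in S^{N-1}$ (integrand bounded by $N$). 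This is a genuine simplification of the argument you propose: there is no need for any uniform-in-$\ell$ quantitative version of Theorem~\ref{sats1}, nor any measure-zero complement, because the radial weight $\rho^{N-1}$ together with dominated convergence over the sphere is enough to upgrade ``density one along a.e.\ line through $f$'' to ``density one in the ambient ball.''
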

Generically, all critical points are simple and then $f$ is a Lebesgue density point of CE-maps in $\RR_d$ in Theorem \ref{sats2}. 
It is likely that this is true even if the critical points are not simple, using a suitable reparameterisation of the parameter space so that all critical points move analytically if higher order critical points split. However, we will not consider that case in this paper.  
  
%  \begin{Main}
%Let $f$ be a rational Collet-Eckamnn map, with $\oli{\ga} = \uli{\ga}$, and such that the Julia set is the {\em not} whole sphere. Then $f$ is a Lebesgue density point of hyperbolic %maps.
%    \end{Main}

The proof of the main theorem is mainly based on a combination of strong results on transversality by G. Levin and developed classical Benedicks-Carleson parameter exclusion techniques. In particular, it is a generalisation of \cite{MA-Z}, which was the (revised) thesis of the author. 
Apart from proving Theorem \ref{sats1}, the aim of this paper is partially to make the arguments in the Benedicks-Carleson parameter exclusion techniques more transparent. 

\begin{Rem}
It will be clear from the proof that the slow recurrence condition in Theorems \ref{sats1} and \ref{sats2} is a little superfluous; one only needs to have slow recurrence (\ref{slowrecur}) for {\em some} sufficiently small $\al > 0$, depending on $f=f_0$. The CE-maps constructed in \cite{MA-Z} have this property close to the starting (Misiurewicz-Thurston) map. It follows that the set of maps satisfying this weaker assumption has positive Lebesgue measure. 
\end{Rem}

\newpage

\subsection*{Acknowledgements}
The author wants to thank G. Levin for many fruitful conversations on transversality. The author thanks M. Bylund and W. Cui for many interesting discussions, remarks and corrections, and finally expresses his gratitude to the Department of Mathematics at Lund University.
%for providing a nice atmosphere. 

\section{Some definitions}

Let $f=f_0$ be a slowly recurrent Collet-Eckmann map, and $f_a$, $a \in (-\vep,\vep)$ a real analytic family around $f_0$. We assume that the family is {\em non-degenerate}, which in particular means that every critical point $c_l(a)$ of $f_a$ moves analytically with the parameter $a$.
Another condition on the family is given in Section \ref{L-parameter-space}. We put $f_a(z) = f(z,a)$ and $Df_a(z) = f_a'(z)$. 
% The proof goes through even if higher order critical points split under perturbations. How this splitting occurs is resolved in \cite{MA} but %since the main construction is the same we omit those details in this version.
Let $v_l(a) = f_a(c_l(a))$ be the critical value, and suppose that $v_l = v_l(0)$ does not contain any critical points in its forward orbit under $f_0$, for all $l$. Put
\[
\xi_{n,l}(a) = f_a^n(c_l(a)).
\]

We will study the evolution of $\xi_{n,l}(\om)$ for a small interval $\om = (-\vep,\vep)$ around the starting map $f_0$. In the beginning this curve will grow rapidly from the expansive properties of the starting map, but later on we have to delete parameters that come too close to the set of critical points, denoted by $Crit_a$, of $f_a$. Now, $Crit_a$ moves analytically, but it turns out that $\xi_{n,l}(\om)$ and $Crit_{\om}$ are very different in diameter, due to the expansion of $\xi_{n,l}(\om)$; it will be much bigger than $\diam(Crit_{\om})$.  Let $U$ be a neighbourhood of the critical points for the unperturbed map. Choose $\vep > 0$ so that $U$ is a neighbourhood around $Crit_a$, for all $a \in (-\vep,\vep)$. Moreover, if we let $U_l$ be a component of $U$ which contains the critical point $c_l$ then we impose the condition $\dist(c_l(\om),\partial U_l) \gg \diam( c_l(\om) )$ for all $l$. To make $U$ more precise, we choose $\de  = e^{-\De} > 0$ so that $U = \cup_l B(c_l,\de)$. Hence $\vep$ depends on $\de$. 

The approach rate at which the distance $\dist(\xi_{n,l}(a),Crit_a)$ may go to zero is controlled by the so called {\em basic approach rate assumption} which is inherited from the slow recurrent condition.

\begin{Def}
Let $\al > 0$. We say that the critical point $c_l(a)$, (or parameter $a$ with critical point $l$) satisfies the {\em basic assumption up to time $n$ with exponent $\al$,} if
\[
\dist(\xi_{n,l}(a),Crit_a) \geq K_b  e^{-2\al k}, \quad \text{ for all $k \leq n$},
\]
where $K_b > 0$ is the same constant which appears in the slow recurrent condition.
%Let $K_b = \min_{1 \leq l \leq 2d-2} {K_l}$.
\end{Def}

Obviously the starting map $f_0$ satisfies the basic assumption for all times for any $\al > 0$. From now on, fix $\al > 0$ to be much smaller than $\min(\ga_0, \ga_H) (1-\tau) /(400K \Ga)$, where $\ga_0 = \oli{\ga}=\uli{\ga}$ is the Lyapunov exponent appearing in the Collet-Eckmann condition for $f_0$, $\Ga = \sup_{a \in (-\vep,\vep), z \in \hat{\C}} \log |Df_a(z)|$, $K$ is the maximal degree of the critical points, $\ga_H$ is the exponent from Lemma \ref{oel}, and where $0 < \tau < 1$ is fixed (this is used in Section \ref{large-deviations}). We assume for the starting map $f_0$, that there is some constant $C_0 > 0$ such that,
\[
|Df^n(v_l(0))| \geq C_0 e^{\ga_0 n}, \quad \text{ for all $n \geq 0$.}
  \]
We will construct a set of parameters around $a=0$ which also satisfies both this basic assumption for this specific $\al$ and the Collet-Eckmann condition for possibly slightly smaller Lyapunov exponents $\ga$. Since we fix $\al > 0$ we only speak of the basic assumption, without mentioning the exponent in the future.  

We will make an induction argument based on the fact that we have some ``basic'' Lyapunov exponent $\ga_B > 0$. This is typically smaller than the original Lyapunov exponent $\ga_0$ for $f_0$. During the induction arguments we also have to allow the Lyapunov exponent to decrease down to some certain value, a fraction of $\ga_B$, a so called ``intermediate exponent'' $\ga_I < \ga_B$, which is required for most lemmas to work.  
We will also define the number $\ga_B$ later, but it is slightly smaller than the minimum of $\ga_H$ in Lemma \ref{oel} and $\ga_0$ from the starting function $f_0$. 

We write $A \sim_{\ka} B $, where $\ka \geq 1$ if 
\[
\frac{1}{\ka} A \leq B \leq \ka B. 
\]
We write $A \sim B$ to say $A \sim_{\ka} B$ for some constant $\ka \geq 1$. In several inequalities we use $C$ several times for possibly different constants, when it is clear that these constants do not depend on the dynamics, i.e. the number of iterations.
%It is the author's opinion that calculating explicit values of such constans often blur the overall idea.  
%By doing this, we hope to avoid blurring the overall idea. 

\subsection{Bound and free periods}

In this section we define some fundamental concepts which will be used throughout the paper. Many of them are direct analogues of corresponding definitions in \cite{BC1,BC2}, see also \cite{MA-Z}. We speak of a {\em return} of the sequence $\xi_{n,l}(\om)$ into $U$ or $U'$, when we mean that $\xi_{n,l}(\om) \cap U \neq \emptyset$ or $\xi_{n,l}(\om) \cap U' \neq \emptyset$ respectively. We also speak of returns into $U$ or $U'$ of the sequence $\xi_{n,l}(a)$ for a single parameter $a$, and this means simply that $\xi_{n,l}(a) \in U$ or $\xi_{n,l}(a) \in U'$ respectively. Returns into the annular neighbourhoods $U' \sm U$, i.e. when $\xi_{n,l}(a) \cap U' \neq \emptyset$ but $\xi_{n,l}(\om) \cap U = \emptyset$, are called {\em pseudo-returns}. Sometimes we drop the index $l$ and write only $\xi_n(a) = \xi_{n,l}(a)$ for some critical point $c_l(a)$. We will also consider so called {\em deep returns}, which are returns into a smaller neighbourhood $U^2 = \cup_l B(c_l,\de^2) \subset U$ of the critical points. These deep returns will be used only in the end of the paper, in Section \ref{large-deviations}. 

The point is that when a return occurs, so that for example $\xi_n(a) \in U$, then the orbit follows the original orbit, i.e. $\xi_{n+j}(a)$ stays close to $\xi_j(a)$ for the first $j$. This is the so called {\em bound period}, which can be defined both for points $\xi_n(a)$ and curves $\xi_n(\om)$ (precise definitions below). After the bound period ends, the {\em free period} starts until the next return, and so on.  During the bound period, due to the expansion of the derivative of the original early orbit, we can show expansion of the derivative also during the bound period (with a certain loss due to the actual return, which is close to the critical set). Because of this, we will not consider  returns during the bound period (bound returns), but only consider returns after the free period (free returns). When we speak of a return, we mean a free return unless otherwise stated. This is very similar to earlier constructions in \cite{MA-Z} and \cite{BC1,BC2}. During the free period we will show a uniform expansion of the derivative. The result is the same as in the old traditions but the techniques stem from quite different sources in this new situation of a more general CE-map. The number $\be > 0$ below is related to $\al$ in the basic approach rate condition. There is lots of freedom, but let us set $\be = \al$, so that we can use the same exponent. 

\begin{Def}[Pointwise bound period]
Let $\be > 0$. Let $\xi_{n,l}(a) \in U_k' \subset U'$ be a return. Then we define the {\em bound period} for this return as the indices $j > 0$ for which the inequality
\[
|\xi_{n+j,l}(a) - \xi_{j,k}(a)| \leq e^{-\be j} \dist(\xi_{j,k}(a), Crit_a), 
\]
holds. The largest number $p > 0$ for which the inequality holds is called the length of the bound period.  \\
\end{Def}
%\begin{Rem}
%Alternatively, one can replace  $\dist(\xi_{j,k}(a), Crit_a)/ (10j^2)$ with for instance $e^{- \be j}$, where $\be > 2 \al$,  which gives %a stronger condition. 
%\end{Rem}

To define the bound period for an interval, we consider a return $\xi_{n,l}(\om)$ into $U$. If 
\begin{equation} \label{essential-return}
  \diam(\xi_{n,l}(\om)) \geq \frac{1}{2} \dist(\xi_{n,l}(\om),Crit_{\om})/ (\log (\dist(\xi_{n,l}(\om),Crit_{\om})))^2,
\end{equation}
then we say that the return is {\em essential}. Otherwise it is {\em inessential}. 
With $\ti{r} = -\log (\dist(\xi_{n,l}(\om),Crit_{\om}))$, then the return is essential if  $\diam(\xi_{n,l}(\om)) \geq (1/2) e^{-\ti{r}}/\ti{r}^2$, a bit more convenient notation. Actually we will partition the parameter intervals (explained later) so that they become so called partition elements, defined as follows. 

\begin{Def} \label{partition-element}
For a given $S > 0$, we call parameter intervals $\om$ satisfying the inequality
\[
\diam(\xi_{k,l}(\om)) \leq \left\{
\begin{array}{cc}
\frac{\dist(\xi_{k,l}(\om),Jrit_{\om})}{(\log (\dist(\xi_{k,l}(\om),Jrit_{\om})))^2} , & \text{if } \xi_{k,l}(\om) \cap U \neq \emptyset, \nonumber  \\
S, & \text{if } \xi_{k,l}(\om) \cap U = \emptyset, \nonumber
\end{array} \right.
\]
for all $k \leq n$, {\em partition elements} at time $n$. 
\end{Def}

We do not speak of essential or inessential returns for pseudo-returns. 

\begin{Def}[Bound period for an interval, essential returns or pseudo-returns]
Let $\xi_{n,l}(\om) \cap U_k' \neq \emptyset$, ($U_k' \subset U'$) be an essential return or a pseudo-return. Then we define the bound period for this return as the indices $j > 0$ for which the inequality (recall $f_a(z) = f(z,a)$),
\[
\dist(f^j(z,a), \xi_{j,k}(b))  \leq e^{-\be j} \dist(\xi_{j,k}(b), Crit_b),
\]
holds for all $a, b \in \om$, and all $z \in  \xi_{n,l}(\om) $. 
\end{Def}

If the return $\xi_{n,l}(\om)$ into $U_k$ is inessential we will consider a host-curve as follows. Draw a straight line segment $L'$ through the end points of $\xi_{n,l}(\om)$ with length equal to $e^{-r}/r^2$ where  
\[
r = \lceil  -\log (\dist(\xi_{n,l}(\om),Crit_{\om}))   - 1/2  \rceil.
\]
To make it well defined, let us say that the line segment $L'$ shall be symmetric with respect to the end points of $\om$. Let $L$ be the part of $L'$ with the central part between the end points deleted. The {\em host curve} for this return is then $L \cup \xi_{n,l}(\om)$. 

\begin{Def}[Bound period for an interval, inessential returns]
Let $\xi_{n,l}(\om) \cap U_k \neq \emptyset$ be an inessential return. Then we define the bound period for this return as the indices $j > 0$ for which the inequality
\[
\dist(f^j(z,a), \xi_{j,k}(b))  \leq e^{-\be j} \dist(\xi_{j,k}(b), Crit_b),
\]
holds for all $a, b \in \om$, and all $z \in L \cup \xi_{n,l}(\om)$. 
\end{Def}

It will be clear later that the dependence on the parameter in these definitions is inessential.

\section{Expansion during the free period.}

During the free period we want to show that the derivative of $f^n(z)$ grows exponentially as long as $f^j(z)$ stays outside $U$ for $j=0,\ldots,n-1$. In earlier papers, this was settled via the orbifold metric for postcritically finite (rational) maps, given that the postcritical set consists of at least $3$ points. Here we have to use different techniques to build a uniform expansion using the second Collet-Eckmann condition discussed in \cite{GS}. In Proposition 1 of that paper, it is stated that the second Collet-Eckmann condition is satisfied for all critical points of maximal multiplicity. However, with the slow recurrence condition, this statement holds for every critical point in the Julia set \cite{Mats-2}. 

Without going through the whole construction, we refer to \cite{GS} and \cite{GS2} for the details. The main idea is based on three types of iterated preimages of shrinking neighbourhoods of a given point $z$, which in our case is a critical point $c$ in the Julia set (actually we assume that $J(f) = \hat{\C}$). This critical point is assumed not to have any critical points in its backward orbit. The type 2 and type 3 orbit have a uniform expansion automatically by construction, see Lemma 3 and Lemma 4 in \cite{GS}. The type 1 preimages connects two critical points in the backward orbit in a way that one has a ball $B(c,r)$ and considers preimages $U_k$ which are sequences of components of $f^{-k}(B(c,r_k)$ of shrinking neighbourhoods $B(c,r_k)$, where $r_k \leq r$ is decreasing and $\lim\limits_{k \raw \infty} r_k \geq r/2$. For a type 1 orbit one has a critical point $c_1 \in \partial U_n$ for some $n$, and no critical points in $\oli{U_k}$ for $0 < k < n$. The length of this type 1 orbit is $n$. Due to the difference in multiplicity of the critical points, type 1 orbits do not ensure immediate uniform expansion. This is resolved by looking at preimages of the type $\ldots 111113$, i.e. a sequence of $1$s followed by a type 3 orbit. Such iterated preimages have uniform expansion (see p. 83 in \cite{GS}).

What can happen is that the induction starts (from the right) with a sequence of $1$s only. Then it may happen that we do not have the desired expansion. Looking at such a block of $1$s in the beginning of the sequence, we see from the calculations on p. 83 \cite{GS} that at a preimage $y  = f^{-k}(c)$ we can estimate the growth of the derivative as follows. Let $\mu_{max}$ be the maximal multiplicity of the critical points and $\mu$ the multiplicity of $c$. The number $d=0$ below because that is the distance from the centre of the ball $B(c,r)$ to the critical point $c$. For some $Q > 1$ we then have, verbatim,  
\begin{equation} \label{backward-slow}
|Df^k(y)|^{\mu_{max}} \geq Q^k \frac{r^{\mu_{max}-1}}{(r+d)^{\mu-1}} = Q^k r^{\mu_{max}-\mu}.
\end{equation}
So if $\mu < \mu_{max}$ then this expansion is not uniform. By assumption there is a critical point $c_1$ on the boundary of the shrinking neighbourhood of $f^{-k}(B(c,r))$, for some $k$, i.e. $c_1 \in \partial U$, where $U = \text{comp}( f^{-k}(B(c,r_k))$ where $r/2 \leq r_k \leq r$. However, by the slow recurrence condition, we have $\dist(f^k(c_1),c) \geq e^{-\al k}$ for some small $\al > 0$. This means that $e^{-k \al} \leq r$. Since $\al > 0$ can be chosen as small as we like, (\ref{backward-slow}) becomes, for some $Q_1 > 1$ possibly slightly smaller than $Q$,
\[
|Df^k(y)|^{\mu_{max}} \geq Q^k e^{-(\mu_{max} -\mu) \al k} = Q_1^k. 
\]

\begin{Lem} \label{oel}
There exist a neighbourhood $U'$ of the critical points such that the following holds. Let $U \subset U'$. There exist $\la > 1$ and $C' > 0$, where $C'$ depends on $\de'$ but not on $\de$ such that; if $f_a^k(z) \notin U$ for $k=0,\ldots, n-1$, then 
\[
|Df_a^n (z)| \geq C e^{-K\De} \la^n.
\]

For each $0 < q \leq 1$ there exists a neighbourhood of the critical points $\hat{U} \subset U'$ such that for any neighbourhood of the critical points $U_1 \subset U \subset \hat{U}$ satisfying $\diam(U_{1,j}) \geq q \diam(U_j)$, where $U_{1,j} \subset U_j$ are components of $U_1$ and $U$ respectively, we have the following. If $z \notin U_1$, $f_a^k(z) \notin U$ for $k=1,\ldots, n-1$, and $f_a^n(z) \in U$ then
\[
|Df_a^n (z)| \geq C' \la^n,
\]
(were $C'$ only depends on $U'$). If $q=1$ we can set $U_1=U$ and $\hat{U} = U'$. 
\end{Lem}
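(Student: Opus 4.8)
The plan is to establish the lemma in two parts, exploiting the structure set up in the preceding discussion around the ``type $1,2,3$'' preimage machinery from \cite{GS} together with the slow recurrence condition. First I would prove the basic non-uniform statement: if $f_a^k(z) \notin U$ for $k = 0,\dots,n-1$, then $|Df_a^n(z)| \geq C e^{-K\De}\la^n$. The idea is to pull back a definite-size ball $B(z', \de')$ around a suitable point (with $\de'$ independent of $\de = e^{-\De}$) along the orbit $z, f_a z, \dots, f_a^{n-1} z$, noting that since the orbit avoids $U = \cup_l B(c_l, \de)$, the only obstruction to controlling distortion comes when a pullback component bumps into the $\de$-neighbourhood of a critical point. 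Each such encounter costs at most a factor comparable to $\de^{K-1} = e^{-(K-1)\De}$ from the local degree-$\le K$ behaviour of $f_a$ near a critical point; but by the slow recurrence condition the critical point sitting on the boundary of such a shrinking component satisfies $e^{-\al k} \leq \de$-type bounds, so as in the displayed computation \eref{backward-slow} the loss is absorbed into a slightly smaller exponential rate, leaving a single overall factor $e^{-K\De}$ (the ``worst'' bound return) and a clean geometric growth $\la^n$ with $\la > 1$. The uniform expansion of type $2$ and type $3$ orbits (Lemmas 3 and 4 of \cite{GS}) and of terminal blocks $\ldots 11113$ provides the base rate $\la$, and the neighbourhood $U'$ is chosen small enough that all these constructions apply for every $U \subset U'$.

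For the second, sharper statement I would argue as follows. Fix $0 < q \leq 1$. The point is that if the orbit not only avoids $U$ for intermediate times but also starts outside a comparable neighbourhood $U_1$ (with $\diam(U_{1,j}) \geq q\diam(U_j)$) and ends with a genuine return $f_a^n(z) \in U$, then one can run the pullback argument of part one on the ball $B(f_a^n(z), \de)$ (or a component thereof) back to $z$: now the endpoint return and the starting condition $z \notin U_1$ together ensure that the shrinking neighbourhoods stay of size $\gtrsim q\de$ rather than degenerating, so the distortion/loss factors at each step are controlled purely in terms of $q$ and $U'$, not in terms of $\de$ or the number of iterations. Summing the logarithmic losses (which form a convergent geometric-type series by slow recurrence, exactly as in the passage from \eref{backward-slow} to $Q_1^k$) yields $|Df_a^n(z)| \geq C' \la^n$ with $C'$ depending only on $U'$ (through $q$, which in turn determines $\hat U$). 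The degenerate case $q = 1$ collapses to $U_1 = U$, $\hat U = U'$ and recovers the statement as phrased.

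The main obstacle, in my view, is the uniformity of the constant $C'$ in the second part: one has to check carefully that the cumulative loss coming from all the bound-type returns to the $\de$-neighbourhoods along the backward orbit can be bounded independently of $\de$ and of $n$. This is precisely where the slow recurrence condition $\dist(f^k(c_1), c) \geq e^{-\al k}$ for small $\al$ is essential --- it forces $e^{-k\al} \lesssim \de$ at the $k$-th such return, which (since $\al$ was fixed $\ll \ga_0 /(400 K\Ga)$) makes the geometric series $\sum_k (\mu_{max} - \mu)\al k \cdot (\text{const})$ converge and makes each individual factor $Q^k e^{-(\mu_{max}-\mu)\al k}$ still grow like $Q_1^k$ with $Q_1 > 1$. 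I would also need to be slightly careful about the parameter dependence --- that the same $\la$, $C'$, $U'$ work uniformly for all $a \in (-\vep, \vep)$ --- but this follows from continuity of $f_a$, $Df_a$ and $Crit_a$ in $a$ together with compactness of $\hat{\C}$ and of the parameter interval, shrinking $\vep$ if necessary, and is essentially the content of the remark that ``the dependence on the parameter in these definitions is inessential.''
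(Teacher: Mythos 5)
Your plan captures the broad outline (pull back a neighbourhood, control distortion, use slow recurrence and the \cite{GS} machinery for a base rate), but it misses the step that actually makes the second, sharper statement work, and it conflates two layers of the argument. The ``sum the per-step logarithmic losses as a geometric series'' picture is not the paper's argument and, as stated, doesn't close: the pullback components $W_k$ of $U_j$ could in principle develop comparable diameter and distance to $Crit$, at which point the one-step distortion is no longer close to $1$ and the losses would not sum to something independent of $n$ and $\de$. The paper's key idea is to show this degenerate geometry \emph{never occurs}: one sets up the good-geometry condition $\diam(W_k)\leq\ka\,\dist(W_k,Crit)$ (equation \eref{dyadic}), uses the second Collet-Eckmann condition along the preimage sequence of $c_j$ to get geometric decay $\diam(W_k)\lesssim\la_1^{-k}\diam(U_j)$, and argues by contradiction: if \eref{dyadic} first fails at time $n_0$, then $W_{n_0}$ is so small and so close to $Crit$ that (under condition \eref{condition}) $W_{n_0}\subset U_{1,j}$, yet $W_{n_0}$ contains a point of the orbit of $z$, which was assumed to avoid $U_1$. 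Hence \eref{dyadic} holds up to $n$, one-step distortion is uniformly $\leq C_3(\ka)$ close to $1$, and the bound $|Df_0^n(z)|\geq C_2\la_1^n$ follows directly — there is no geometric series of losses to sum. The whole role of $q$ and of shrinking $\hat U$ is to force $n_0$ to be large enough that \eref{condition} (equivalently $\la_1^{n_0}\geq 10/(q\ka)$) is automatic, and this is exactly the part your ``stay of size $\gtrsim q\de$'' heuristic does not pin down — it is the \emph{ratio} $\diam/\dist(\cdot,Crit)$, not the absolute size, that is being controlled.

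You also fold the pre-lemma discussion into the proof: the type-$1,2,3$ preimage machinery and the passage from \eref{backward-slow} to $Q_1^k$ serve to establish, \emph{before} the lemma, that CE plus slow recurrence implies the second Collet-Eckmann condition. Inside the lemma's proof the second CE condition is used as a black box to get the decay $\diam(W_k)\lesssim\la_1^{-k}$, and the distortion is controlled by \eref{dyadic}, not by slow recurrence. For the first displayed inequality the paper does not run a pullback at all: it cites Ma\~n\'e's theorem, which gives uniform expansion (with a $U$-dependent constant, written as $Ce^{-K\De}$) for orbits staying outside $U$. Finally, the uniformity in $a$ is obtained in the paper by chaining blocks of fixed length $N_1$ over which continuity transfers the unperturbed bound, which is more than a bare compactness appeal; your one-line gloss is acceptable as an outline but would need that decomposition to be quantitative.
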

\begin{proof}
Let us first consider the unperturbed map $f_0$. By the argument before the lemma, it follows from \cite{GS} that the Collet-Eckmann condition implies the second Collet-Eckmann condition, for all critical points. Looking at any iterated preimage $z=f^{-n}(c)$ to a critical point $c$, the second Collet-Eckmann condition implies 
  \[
|Df^n(z)| \geq C_2 \la_2^n , 
\]
for some $\la_2 > 1$ and a constant $C_2 > 0$.  Let $0 < \ka < 1$ and $N > 0$ (we give conditions ot these constants below). We follow partially the idea of \cite{PRS} (pp. 40--41).  Let $U'$ to be a union of disks $U_j'$  around the critical points with radius $\de'$, so that for any iterated preimage $f^{-k}(U_j')$ of a component of $U'$, we have 
\begin{equation} \label{dyadic}
\diam (f^{-k}(U_j')) \leq \ka \cdot \dist(f^{-k}(U_j'),Crit_0), \quad \text{ for all $k \leq N$}. 
\end{equation}
This implies that we have distortion inside $f^{-k}(U_j')$, that is, for any choice of $z,w$ in the same component of $f^{-k}(U_j')$ we have
\begin{equation} \label{distortion-est}
  \frac{|Df(z)|}{|Df(w)|} \leq C_3,
\end{equation}
where $C_3= C_3(\ka) \raw 1$, as $\ka \raw 0$.
%Let now $\ka$ be such that there exist $\la_1 > 1$ such that $C_3^{-1} \la_2 > \la_1$. Let $N$ satisfy $\la_1^N \geq 10/\ka$. 

If (\ref{dyadic}) is not valid, then we can use another estimate as follows.
For any disk $D$ of radius at most $\de' > 0$ there is a constant $C_4$ such that
\[
|Df(z)| \diam(D') \leq C_4 \diam (D), \text{ for all $z \in D'$,}
\]
where $D'$ is a component of $f^{-1}(D)$. Here $C_4$ only depends on $\de'$. 

Suppose now that $N_0 > 0$ is the largest time where (\ref{dyadic}) is valid. If we put $W_k' = f^{-k}(U_j')$ and $z_k \in W_k'$ the corresponding preimage of $c_j \in U_j'$, and if $N$ is large enough, then
  \begin{align}  \nonumber
    \diam(U_j') \geq C_3^{-(N_0-1)}|Df^{N_0}(z_{N_0-1})|   \diam (W_{N_0-1}')  \nonumber \\
   \geq C_4^{-1} C_3^{-(N_0-1)} |Df^{N_0}(z_{N_0})| \diam (W_{N_0}') &\geq \la_1^{N_0} \diam(W_{N_0}'),
\end{align}
for some $\la_1 > 0$. Now let $N$ be so large so that this holds and also that $\la_1^N \geq 10/\ka$. So from now on $U'$ and $N$ are fixed. 

Now suppose that $U_1 \subset U \subset \hat{U} \subset U'$, and let $U_1 = \cup_j B(c_j,\de_1)$, $U = \cup_j B(c_j,\de)$, $\hat{U} = \cup_j B(c_j,\hat{\de})$ i.e., $\de_1 \leq \de \leq \hat{\de} \leq \de'$. Suppose that $z \notin U_1$, $f^k(z) \notin U$ for all $k=1, \ldots, n-1$ and $f^n(z) \in U$. Let now $n_0 > 0$ be the first time for which (\ref{dyadic}) is not valid
with $U_j'$ replaced by $U_j$, the components of $U$. Let $W_k = f^{-k}(U_j)$ be the corresponding preimages of $c_j \in U_j$. By the definition of $n_0$, 
\begin{equation}
  \dist(W_{n_0},Crit) \leq (1/\ka) \diam(W_{n_0}) \leq  (1/\ka) \la_1^{-n_0} \diam(U_j). 
\end{equation}
Let us now consider the condition
\begin{equation} \label{condition}
  (1/\ka) \la_1^{-n_0} \leq \frac{q}{10} \leq \frac{\diam(U_{1,j})}{10 \diam(U_j)},
\end{equation}
  where $U_{1,j} \subset U_j$ is the corresponding component of $U_1$ inside $U_j$ and the second inequality is valid by assumption. We discuss this condition soon. It implies that 
\begin{align}
 \dist(W_{n_0},Crit) &\leq \frac{\diam(U_{1,j})}{10 \diam(U_j)} \diam(U_j) = \frac{1}{10}\diam(U_{1,j}), \quad  \text{ and } \\
\diam(W_{n_0}) &\leq  \la_1^{-n_0} \diam(U_j) \leq  \frac{\ka}{10}\diam(U_{1,j}).
\end{align}
Clearly, this implies that $W_{n_0} \subset U_{1,j} \subset U_1$. If $n_0 \leq n$, this was not allowed, since $z_k \notin U_1$, $1 \leq k \leq n$. Hence (\ref{dyadic}) is valid all the time up until $n$. Therefore, if $w \in f_0^{-n}(U)$, we have, by the distortion estimate (\ref{distortion-est}),
\begin{equation} \label{exp-growth}
  |Df_0^n(w)|  \geq |Df_a^n(z)| C_3^{-n} \geq C_2 \la_1^n,
\end{equation}
where $z$ is the preimage of the corresponding critical point and $C_2$ is the constant form the second Collet-Eckmann condition, and hence does not depend on $U$. 

Let us now discuss the condition (\ref{condition}). Then the condition implies that
\begin{equation} \label{Nq}
n_0 \log \la_1 \geq \log (1/q) - \log \ka + \log 10 \geq \De_1 - \De - \log \ka + \log 10.
  \end{equation}
  Hence this basically forces $n_0-1$, the time when (\ref{dyadic}) is valid, to be bounded below by the difference $\De_1 - \De$. Let now $\hat{N}$ be the largest integer such that (\ref{dyadic}) is valid with $U_j$ replaced by $\hat{U}_j$, the components of $\hat{U}$. Then we can say that $\hat{U}$ depends on $q$ in the following sense. For a fixed $q$ we choose $\hat{U}$ so that the corresponding $\hat{N}$ satisfies (\ref{Nq}), with $n_0$ replaced by $\hat{N}$. Clearly, if $q=1$ we can put $\hat{U} = U'$. 

From a classical result by R. Ma\~n\'e, we have an estimate as follows. If $f_0^k(z) \notin U$ for $k=0,\ldots,n$, then 
\begin{equation} \label{weak-oe}
|Df_0^n(z)| \geq C \la_3^n,
  \end{equation}
  for some constant $C >0$ that depends on $U$ and $\la_3 > 1$ which does not depend on $U$. We may assume that $\la_3 > \la_1$, otherwise diminish $\la_1$ so that this holds. This proves the first statement of the lemma. 

Choose $N_1 >0$ so that outside $U_1$ the orbits $f_a^k(z)$ and $f^k(z)$ follow each other up to $N_1$, i.e. for $k \leq N_1$, and so that $|Df_a^{N_1}(z)| \geq C \ti{\la}_3^{N_1} \geq \ti{\la}_1^{N_1}$, for all $a \in (-\vep,\vep)$. Here $\ti{\la}_3 > 1$ comes from a perturbed version of (\ref{weak-oe}). Since also (\ref{exp-growth}) is valid for small perturbations if we bound the number of iterations by $N_1$, we let $\ti{\la}_1 > 1$ be the corresponding perturbed version of $\la_1 > 1$.  Let us write $n$ as $n = qN_1 + r$, where $r < N_1$. Then, if we assume that $z \notin U_1$, $f_a^k(z) \notin U$ for $k=1,\ldots, n-1$ and $f_a^n(z) \in U$ we get
\begin{equation} \label{uniform-exp}
  |Df_a^n(z)| = |Df_a^r(f^{qN_1}(z))| |Df_a^{N_1}(f^{(q-1)N_1}(z))| \ldots |Df_a^{N_1}(z)| \geq C_2 \ti{\la}_1^n,
\end{equation}
where we used (\ref{exp-growth}) for $|Df_0^r(f^{q N_1}(z))| \geq C_2 \la_1^r$, so that $|Df_a^r(f^{q N_1}(z))| \geq C_2 \ti{\la}_1^r$. The second statement of the lemma follows with $\la = \ti{\la}_1$. 
\end{proof}

The classical outside expansion lemma is obtained by setting $U_1 = U$ in the above lemma, i.e. $q=1$.
From \cite{GS2}, it can be seen that the Lyapunov exponent from the second Collet-Eckmann condition is inherited from the exponent from the ordinary Collet-Eckmann condition. Hence the uniform ``outside exponent'' $\log \ti{\la}_1$, is close to the Lyapunov exponent for the starting map $f_0$ (but likely lower than it), depending on the neighbourhood $U'$. Let us set $\ga_H = \log \ti{\la}_1$.

\section{Parameter-phase distortion} \label{L-parameter-space}

One fundamental result we need is the comparison between space and parameter-derivatives. This has been proved in \cite{BC1,BC2} and many other papers. But for our purposes we need a stronger form of this result due to G. Levin. We use a normalised space, described in \cite{Levin-book}, of maps in $\RR_d$ as follows. We consider the set $\La_{d,\oli{p'}} \subset \RR_d$ of all rational maps of degree $d$ with exactly $p'$ distinct critical points $c_j$ with corresponding multiplicities $m_j$, $1 \leq j \leq p'$, where $\oli{p'} = \{m_1, \ldots, m_{p'} \}$, normalised to that every map $f \in \La_{d,\oli{p'}}$ has the form  
\[
f(z) = \si z + b + \frac{P(z)}{Q(z)}, 
\]
where $\si \neq 0$, and $\deg(P) \leq d-2$, $\deg(Q) \leq d-1$ and where $P$ and $Q$ have no common zeros. By Proposition 8 in \cite{Levin-book}, every $f \in \RR_d$ is conjugate by a M\"obius transformation to some $\ti{f} \in \La_{d,\oli{p'}}$. 
So we can view $\RR_d$ as a union of sets of the type $\La_{d,\oli{p'}}$ up to equivalence by M\"obius transformations. Note that in every such set, critical points do not split. 

We assume that the real analytic family $f_a \in \La_{d,\oli{p'}}$, $a \in (-\vep,\vep)$ around $f_0$ has a tangent vector $\oli{u} \neq \oli{0}$. Hence $f_a(z) = f_0(z) + a u(z) + \OO(a^2)$ for some $u \neq 0$. Let us write $\xi_{n,l}(\oli{a}) = f_{\oli{a}}^n(c_l(\oli{a}))$ , where $\oli{a} = (a_1,a_2,\ldots,a_{p'})$ is a parameterisation of the parameter space $\La_{d,\oli{p'}}$ around $f=f_0$, where $f_0$ corresponds to $(a_1,a_2,\ldots,a_{p'})  = (0,0,\ldots,0)$ and where  $c_j=c_j(a_1,a_2,\ldots,a_{p'})$. In \cite{Levin-book} and \cite{Levin-transv}, it is proven that
%\begin{Rem}
the matrix $L$ formed by the numbers
\[
  L(c_l,a_k) = \lim\limits_{n\raw \infty} \frac{\dfrac{\partial \xi_{n,l}}{\partial a_k}(\oli{0})}{Df_0^{n-1}(fc_l)}
\]
is non-degenerate. Let $\oli{u} = (u_1, u_2, \ldots, u_{p'})$ be a tangent vector of unit length, i.e., a vector in $\P( \La_{d,\oli{p'}} )$, and suppose that this is tangent to the family $f_a$ at $a=0$. Then for almost all directions, i.e. tangent vectors, we have that all entries of $L \cdot \oli{u}$ are non-zero, since the set of directions when this is not true is a finite union of sets of co-dimension $1$ in $\P (\La_{d,\oli{p'}})$. This means precisely that, for almost all directions, the limits
\[
\lim\limits_{n \raw \infty}  \frac{\xi_{n,l}'(0)}{Df_0^{n-1}(f_0(c_l))} = \sum_k a_k L(c_l,a_k),
\]
is non-zero for every $l$, where we mean $\xi_{n,l}'(0) = \frac{\ud}{\ud a} \xi_{n,l}(a \oli{u}) \bigr|_{a=0}$. We thus say that the real analytic family $f_a$ around $f_0$ is {\em non-degenerate}, if its tangent vector satisfies this condition, (in addition to the fact that the critical points move analytically).  
%\end{Rem}

We summarise this result as a proposition below, which is a direct consequence of Theorem 1 combined with Corollary 2.1, part (8), in \cite{Levin-transv}.  It is a generalisation of a corresponding result in  \cite{Levin-book} Theorem 1.1. 

\begin{Prop}[Levin] \label{levin}
Suppose that $f$ is a rational map with summable critical points without parabolic cycles such that $J(f) = \hat{\C}$, and suppose that $f_a$ is a non-degenerate real analytic family around $f_0$, $a \in (-\vep,\vep)$. Then for each critical point $c_l(a)$, the limit
\begin{equation}
\lim\limits_{n \raw \infty} \frac{\xi_{n,l}'(0)}{Df^{n-1}(v_l)} = L_l
\end{equation}
exists and is different from $0$ and $\infty$.
\end{Prop}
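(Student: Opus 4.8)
The plan is to isolate an elementary part (existence and finiteness of the limit, via an explicit series) from the deep part (non-vanishing), the latter being imported verbatim from Levin's transversality theorem. First I would differentiate the defining relation $\xi_{n+1,l}(a)=f_a(\xi_{n,l}(a))$ with respect to $a$ along the direction $\oli{u}$. Writing $\dot f(z)=\partial_a f(z,a)|_{a=0}$ and applying the chain rule, the motion of the critical point produces a term $Df_0(c_l(0))\cdot c_l'(0)$, which vanishes since $c_l(0)$ is a critical point; hence $\xi_{1,l}'(0)=\dot f(c_l(0))$ and, for $n\geq 1$,
\[
\xi_{n+1,l}'(0)=Df_0(\xi_{n,l}(0))\,\xi_{n,l}'(0)+\dot f(\xi_{n,l}(0)).
\]
Solving this inhomogeneous linear recursion by telescoping and using the chain-rule identity $\prod_{k=1}^{j}Df_0(\xi_{k,l}(0))=Df_0^{j}(v_l)$ (recall $\xi_{1,l}(0)=v_l$), one obtains the closed form
\[
\frac{\xi_{n,l}'(0)}{Df_0^{n-1}(v_l)}=\dot f(c_l(0))+\sum_{j=1}^{n-1}\frac{\dot f(\xi_{j,l}(0))}{Df_0^{j}(v_l)},
\]
all quantities being read in the chart coming from the normalisation $f=\sigma z+b+P/Q$, in which these quotients are genuine complex numbers.

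Next I would establish existence and finiteness of $L_l$. The function $\dot f$ is bounded on $\hat{\C}$ (it is the $a$-derivative of a real-analytic family of maps, restricted to the compact parameter interval and the compact sphere), so once the derivatives are read in the spherical metric as in Levin each term of the series is $O(|Df_0^{j}(v_l)|^{-1})$; and $\sum_{j\geq 1}|Df_0^{j}(v_l)|^{-1}<\infty$ is exactly the summability hypothesis on the critical points — for a Collet-Eckmann map it is immediate from $|Df_0^{j}(v_l)|\geq C_0 e^{\ga_0 j}$. Hence the series converges absolutely and uniformly in $n$, so letting $n\raw\infty$ yields a finite
\[
L_l=\dot f(c_l(0))+\sum_{j=1}^{\infty}\frac{\dot f(\xi_{j,l}(0))}{Df_0^{j}(v_l)}\in\C,
\]
ruling out $L_l=\infty$; this convergence is essentially Corollary 2.1, part (8), of \cite{Levin-transv}. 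Expanding the directional derivative as $\xi_{n,l}'(0)=\sum_k a_k\,\partial\xi_{n,l}/\partial a_k(\oli{0})$ then identifies the limit with $L_l=\sum_k a_k L(c_l,a_k)=(L\,\oli{u})_l$, where $L$ is the matrix of Levin.

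The remaining point is $L_l\neq 0$, and here I would simply invoke Theorem 1 of \cite{Levin-transv}: under the standing hypotheses ($J(f)=\hat{\C}$, no parabolic cycles, summable critical points) the matrix $L=(L(c_l,a_k))$ is non-degenerate. Since $L$ is invertible, the set of directions $\oli{u}\in\P(\La_{d,\oli{p'}})$ for which some coordinate of $L\,\oli{u}$ vanishes is a finite union of proper projective subspaces (one hyperplane per critical point); as the tangent vector of a non-degenerate family is by definition required to avoid precisely this set, we conclude $(L\,\oli{u})_l\neq 0$ for every $l$, i.e.\ $L_l\neq 0$. I expect the only genuine obstacle to be this invertibility of $L$ — the transversality statement — which is the technical heart of Levin's work and which is used here as a black box; everything else is the bookkeeping above.
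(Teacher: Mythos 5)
Your proposal is correct and takes essentially the same route as the paper, which states Proposition~\ref{levin} as a direct consequence of Theorem~1 and Corollary~2.1(8) of \cite{Levin-transv} and writes down exactly your series $\sum_{n\geq 0}\partial_a f_0(\xi_{n,l}(0))/Df_0^n(v_l(0))$ at the start of the proof of Lemma~\ref{transv1}. You have correctly isolated what is elementary (the telescoping derivation of the series and its absolute convergence from summability) from what must be imported as a black box (invertibility of Levin's matrix $L$, which together with the paper's definition of a non-degenerate family forces $L_l=(L\,\oli{u})_l\neq 0$).
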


Indeed, a CE-map has all its critical values summable so the above proposition can be used. We also note that by \cite{GS2} any Collet-Eckmann map different from a flexible Latt\'es map carries no invariant line field on its Julia set. We now use this result, to make small perturbations. 

\begin{Lem} \label{transv1}
Assume that $f_0$ satisfies the CE-condition with exponent $\ga$. For any $0 < \ga_1 < \ga$ and $0 < q < 1$, there exists $N > 0$ and $\vep > 0$ such that if $f_a$, $a \in (-\vep,\vep)$ satisfies the CE-condition up to time $m \geq N$ with exponent $\ga_1$, we have
 \[
\biggl| \frac{\xi_{m,l}'(a)}{Df_a^{m-1}(v_l(a))} - L_l \biggr| \leq q |L_l|,
\]
for every $l$. 
  \end{Lem}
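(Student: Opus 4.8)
The plan is to write $\xi_{m,l}'(a)/Df_a^{m-1}(v_l(a))$ and $L_l$ through one and the same telescoping series and then compare them term by term. Set $R_{n,l}(a)=\xi_{n,l}'(a)/Df_a^{n-1}(v_l(a))$, with the empty product convention $Df_a^0\equiv 1$, so that $R_{1,l}(a)=v_l'(a)$. Differentiating $\xi_{n+1,l}(a)=f(\xi_{n,l}(a),a)$ in $a$ gives $\xi_{n+1,l}'(a)=Df_a(\xi_{n,l}(a))\,\xi_{n,l}'(a)+\partial_a f(\xi_{n,l}(a),a)$; dividing by $Df_a^n(v_l(a))=\prod_{j=1}^n Df_a(\xi_{j,l}(a))$ collapses this to $R_{n+1,l}(a)=R_{n,l}(a)+\partial_a f(\xi_{n,l}(a),a)/Df_a^n(v_l(a))$, and hence
\[
R_{m,l}(a)=v_l'(a)+\sum_{n=1}^{m-1}\frac{\partial_a f(\xi_{n,l}(a),a)}{Df_a^n(v_l(a))}.
\]
Since $|\partial_a f|$ is bounded by some $M$ on $\hat{\C}\times[-\vep,\vep]$ and, by Proposition \ref{levin}, $R_{m,l}(0)\to L_l$, the series at $a=0$ converges absolutely (its $n$-th term is at most $(M/C_0)e^{-\ga n}$ by the CE condition for $f_0$), so $L_l=v_l'(0)+\sum_{n=1}^{\infty}\partial_a f(\xi_{n,l}(0),0)/Df_0^n(v_l(0))$.

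Next I would decompose $R_{m,l}(a)-L_l=(v_l'(a)-v_l'(0))+\mathrm{(I)}+\mathrm{(II)}+\mathrm{(III)}$, where, with $D_n(a)=\partial_a f(\xi_{n,l}(a),a)/Df_a^n(v_l(a))-\partial_a f(\xi_{n,l}(0),0)/Df_0^n(v_l(0))$ and a large integer $N_0$ still to be chosen, we put $\mathrm{(I)}=-\sum_{n\ge m}\partial_a f(\xi_{n,l}(0),0)/Df_0^n(v_l(0))$, $\mathrm{(II)}=\sum_{N_0\le n<m}D_n(a)$, and $\mathrm{(III)}=\sum_{1\le n<N_0}D_n(a)$. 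Write $L_{\min}=\min_l|L_l|>0$. Term $\mathrm{(I)}$ is at most $(M/C_0)e^{-\ga m}/(1-e^{-\ga})$, so there is $N$ with $|\mathrm{(I)}|<qL_{\min}/4$ for all $m\ge N$; enlarge $N$ so that $N\ge N_0$. For $\mathrm{(II)}$ the hypothesis is used: since $f_a$ satisfies the CE condition up to time $m$ with exponent $\ga_1$ (with a constant uniform in $a$), each $D_n(a)$ with $n<m$ has modulus $\le M/|Df_a^n(v_l(a))|+M/|Df_0^n(v_l(0))|\le Ce^{-\ga_1 n}$, whence $|\mathrm{(II)}|\le C\sum_{n\ge N_0}e^{-\ga_1 n}$, which we make $<qL_{\min}/4$ by fixing $N_0$ large enough.

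With $N_0$ now fixed, $v_l'(a)-v_l'(0)$ and the finitely many $D_n(a)$, $n<N_0$, involve only $\xi_{n,l}(a)$, $v_l(a)$ and $Df_a^n(v_l(a))$, all continuous in $a$ near $0$, with $Df_0^n(v_l(0))\ne 0$ because $v_l(0)$ has no critical point in its forward $f_0$-orbit. Hence there is $\vep>0$, depending on $N_0$, such that $|v_l'(a)-v_l'(0)|<qL_{\min}/4$ and $|\mathrm{(III)}|<qL_{\min}/4$ for $|a|<\vep$. Adding the four estimates gives $|R_{m,l}(a)-L_l|<qL_{\min}\le q|L_l|$ for every $l$, as claimed.

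The only genuinely delicate point is the block $\mathrm{(II)}$: for $n$ comparable to $m$ the perturbed orbit $\xi_{n,l}(a)$ need not shadow $\xi_{n,l}(0)$, so the summands $D_n(a)$ cannot be bounded by a continuity argument there; it is precisely the assumed Collet--Eckmann lower bound for $f_a$ up to time $m$ --- i.e. a uniform lower bound on $|Df_a^n(v_l(a))|$ for $n\le m$ --- that forces this part of the series to be geometrically small however the perturbed orbit moves.
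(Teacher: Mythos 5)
Your argument is correct and follows essentially the same route as the paper: both write $\xi_{m,l}'(a)/Df_a^{m-1}(v_l(a))$ as the partial sum $\sum_{n=0}^{m-1}\partial_a f_a(\xi_{n,l}(a))/Df_a^n(v_l(a))$, split at a threshold, handle the finitely many low-order terms by continuity in $a$ near $0$, and use the assumed CE lower bound for $f_a$ up to time $m$ (with a constant uniform in $a$) to make the high-order block geometrically small. Your telescoping derivation of the series identity and the explicit decomposition into $(\mathrm{I})$, $(\mathrm{II})$, $(\mathrm{III})$ merely spell out steps the paper leaves to the reader (``the reader may verify'').
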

  \begin{proof}
According to Theorem 1 in \cite{Levin-transv}, we have for $a=0$,
\[
  \lim\limits_{n \raw \infty} \frac{\xi_{n,l}'(0)}{Df^{n-1}(v_l(0))} = \sum_{n=0}^{\infty} \frac{\partial_a f_0(\xi_{n,l}(0))}{Df_0^n(v_l(0))} = L_l.
\]
Let us put $\xi_{m.l}(a) = \xi_m(a)$ and $L_l=L$. 
The reader may verify that for small perturbations $a$ close to $0$,
\[
\frac{\xi_m'(a)}{Df_a^{m-1}(v_l(a)} = \sum_{n=0}^{m-1} \frac{\partial_a f_a(\xi_n(a))}{Df_a^n(v_l(a))}.
\]
We have that $|\partial_a f_a| = |\partial_a f(z,a)|$ is bounded by some constant $B > 0$. We choose $N>0$ so that the series
\begin{equation} \label{tail}
\sum_{n=N+1}^{\infty}  \frac{B}{Ce^{\ga_1 n}}  \leq \min_{l} (q |L_l|/4).
\end{equation}
By continuity, there exists some $\vep > 0$ such that if $a \in (-\vep,\vep)$ then
\[
\biggl|  \sum_{n=0}^{N}  \frac{\partial_a f_a(\xi_{n,l}(a))}{Df_a^n(v_l(a))} - L_l \biggr| \leq q |L_l|/2. 
\]
Since $f_a$ is assumed to satisfy the CE-condition with exponent $\ga_1$, by (\ref{tail}) we get that the tail satisfies
\[
\biggl|   \sum_{n=N+1}^{m}  \frac{\partial_a f_a(\xi_{n,l}(a))}{Df_a^n(v_l(a))} \biggr| \leq q |L_l|/2,
\]
for all $a \in (-\vep,\vep)$ and all $m \geq N$. This finishes the lemma. 
\end{proof}

When we use this lemma we want to choose $N$ and $\vep$ so that the above lemma is valid for $\ga_L = (1/6) \min(\ga_0,\ga_H)(1-\tau)$, where $0 < q < 1$ is small, and where $\tau$ is a constant, $0 < \tau < 1$.

\section{Weak parameter dependence and weak distortion} \label{paramindep}

We will later see that the expansion of the space derivative induces a great deal of parameter independence. This follows {\em a posteriori} from the Main Distortion Lemma \ref{main-distortion} and the Starting Lemma \ref{startlemma}, but to start we now prove a weaker statement.

\begin{Lem} \label{weak-distortion}
Let $N$ be as in Lemma \ref{transv1} and let $\ga_1 \geq (1/4) \min(\ga_H, \ga_0) (1-\tau) > 0$. Suppose that $a,b \in (-\vep,\vep)$, where $(-\vep,\vep)$ is a parameter interval around $f_0$ for which $f_a$ is the real analytic family we are considering. If $\vep > 0$ is sufficiently small we have the following. Suppose that we have;
\begin{itemize}
\item[i)] for all $n \leq N$, that $|Df_a^n(v_l(a))| \geq C_1 e^{\ga_1 n}$, and for $k \leq k_1$ for some $k_1 \geq 0$, that $|Df_a^k(\xi_{N,l}(a))| \geq C_2 e^{\ga_1 k}$,
\item[ii)] for all $n \leq N + k_1$, if $\xi_{n,l}(a), \xi_{n,l}(b) \notin U$, then $|\xi_{n,l}(a)-\xi_{n,l}(b)| \leq S$  and, if  $\xi_{n,l}(a) \in U$ or $\xi_{n,l}(b) \in U$ (or both), then
  \[
    |\xi_{n,l}(a)-\xi_{n,l}(b)| \leq \dist(\xi_{n,l}(c),Crit_c)/ ( \log(\dist(\xi_{n,l}(c),Crit_c)))^2,
  \]
  where $c \in \{ a,b \}$ is such that $\dist(\xi_{n,l}(c),Crit_c)$ is minimal, 
\item[iii)] that $f_a$ is slowly recurrent up until time $N+k_1$.
\end{itemize}

Then there exists $Q > 1$ (arbitrarily close to $1$, if $N$ is large enough), and $\ga_2 > 0$ (arbitrarily close to but slightly smaller than $\ga_1$), such that
\begin{align} \label{ep-strx}
  |\xi_{N+k,l}(a) -\xi_{N+k,l}(b) | &\sim_{Q^k} |Df_a^k(\xi_{N,l}(a))| |\xi_{N,l}(a) - \xi_{N,l}(b)|  \text{ and }  \nonumber\\
  |\xi_{N+k,l}(a) -\xi_{N+k,l}(b) | &\geq |\xi_{N,l}(a) - \xi_{N,l}(b)| C_2 e^{\ga_2 k},
%  |(f_a^k)'(\xi_N(a))| \geq C_1 e^{\ga k}, 
\end{align}
for any $k \leq k_1$, where $\ga_2 > 0$ is slightly smaller than $\ga_1 > 0$.
\end{Lem}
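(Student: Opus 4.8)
The plan is to prove \eqref{ep-strx} by induction on $k$, comparing the orbit increments $|\xi_{N+k,l}(a) - \xi_{N+k,l}(b)|$ to the space derivative $|Df_a^k(\xi_{N,l}(a))|$ via a telescoping mean-value estimate, and controlling the distortion at each step by a geometric factor $Q^k$. First I would set up the telescoping identity: writing $\xi_{N+k+1,l}(a) - \xi_{N+k+1,l}(b)$ and applying the mean value theorem along a path joining $\xi_{N+k,l}(a)$ to $\xi_{N+k,l}(b)$ (staying in the region where the orbits are close by hypothesis ii)), one gets
\[
|\xi_{N+k+1,l}(a) - \xi_{N+k+1,l}(b)| = |Df_a(\zeta_k)| \cdot |\xi_{N+k,l}(a) - \xi_{N+k,l}(b)| + (\text{parameter term}),
\]
where $\zeta_k$ is an intermediate point. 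The parameter term comes from the dependence $f_a$ versus $f_b$ and is bounded by $|\partial_a f| \cdot |a-b| \leq B|a-b|$, which is tiny compared to the main term once $\vep$ is small and $k \geq 1$ because the main term is already exponentially large by the inductive hypothesis; this is where we use that $|Df_a^k(\xi_{N,l}(a))| \geq C_2 e^{\ga_1 k}$ from hypothesis i) together with Lemma \ref{transv1} to pin down $|\xi_{N,l}(a)-\xi_{N,l}(b)|$ relative to $|Df_a^{N-1}(v_l(a))|$, so the whole increment never collapses.

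Next I would pass from $|Df_a(\zeta_k)|$ back to $|Df_a(\xi_{N+k,l}(a))|$ at the cost of a distortion factor. When $\xi_{N+k,l}(a) \notin U$, the intermediate point $\zeta_k$ lies within distance $\sim S$ of $\xi_{N+k,l}(a)$ in a region of bounded derivative distortion, giving a ratio bounded by some fixed $Q_0 > 1$; when $\xi_{N+k,l}(a) \in U$ (or $\xi_{N+k,l}(b) \in U$), hypothesis ii) says the increment is at most $\dist(\cdot, Crit)/(\log\dist(\cdot,Crit))^2$, which is exactly the scale at which $\log|Df_a|$ varies by a bounded amount — this is the standard Koebe/bounded-distortion estimate near a critical point, and it produces a factor $\exp(O(1/(\log r)^2))$ per return which, summed over $k$ steps, still gives a total factor $Q^k$ with $Q$ as close to $1$ as we like for $N$ large (since the relevant returns are sparse and $Q$ absorbs the constant). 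Multiplying the step estimates telescopes to $|\xi_{N+k,l}(a)-\xi_{N+k,l}(b)| \sim_{Q^k} |Df_a^k(\xi_{N,l}(a))|\,|\xi_{N,l}(a)-\xi_{N,l}(b)|$, which is the first line of \eqref{ep-strx}. The second line then follows immediately from hypothesis i): $|Df_a^k(\xi_{N,l}(a))| \geq C_2 e^{\ga_1 k}$ combined with the $Q^{-k}$ loss gives $C_2 e^{\ga_1 k} Q^{-k} \geq C_2 e^{\ga_2 k}$ for $\ga_2$ slightly below $\ga_1$ (choosing $N$ large so that $\log Q < \ga_1 - \ga_2$).

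The main obstacle I anticipate is making the distortion bookkeeping near returns into $U$ rigorous and uniform: one must verify that the hypothesis ii) bound $|\xi_{n,l}(a)-\xi_{n,l}(b)| \leq \dist/(\log\dist)^2$ genuinely forces the path between the two orbit points to remain in a region where $\log|Df_a|$ has controlled oscillation, and that the slow recurrence assumption iii) prevents the distances $\dist(\xi_{n,l}(c),Crit_c)$ from being so small (faster than $e^{-\al n}$) that the accumulated $(\log\dist)^{-2}$ errors fail to sum to something $\ll \log Q$ over the window $[N, N+k_1]$. In other words, the delicate point is not the mean value step itself but certifying that the product of all the per-step distortion constants stays below $Q^k$ with $Q \to 1$ as $N \to \infty$; this is where the precise form of the partition-element/essential-return inequalities and the basic approach rate bound $K_b e^{-2\al k}$ enter, and it is essentially the reason the lemma is stated with $N$ from Lemma \ref{transv1} rather than an absolute constant.
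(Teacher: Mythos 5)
Your overall route is the same as the paper's: induction on $k$, a telescoping mean-value estimate splitting $\xi_{N+k+1}(a)-\xi_{N+k+1}(b)$ into a dominant spatial term $|Df_a(\cdot)|\,|\xi_{N+k}(a)-\xi_{N+k}(b)|$ plus a parameter error $|\partial_a f|\,|a-b|$, using Lemma~\ref{transv1} to seed the estimate $|\xi_N(a)-\xi_N(b)|\gtrsim e^{\ga_2 N}|a-b|$, a per-step distortion factor $Q_0$ close to $1$ coming from hypothesis ii), and then deducing the second line of \eqref{ep-strx} by absorbing the $Q^{-k}$ loss into $\ga_1\mapsto\ga_2$. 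The paper's proof of Lemma~\ref{weak-distortion} does exactly this, writing the product $\prod_{j=0}^{k}\bigl(1-\frac{Be^{-\ga_2(N+j)}}{C_1'C_2|Df_a(\xi_{N+j}(a))|}\bigr)$ and bounding it below by a constant $Q_1^{-1}$.

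One detail in your plan is misattributed, however, and it is worth fixing because it is the place where hypothesis iii) is actually needed. You say slow recurrence is used so that the accumulated $(\log\dist)^{-2}$ distortion errors stay summable; that is the bookkeeping for the \emph{Main} Distortion Lemma, not for this lemma. Here the per-step distortion factor $Q_0$ is a crude constant close to $1$ and needs no such summation. What slow recurrence is really doing is giving a pointwise \emph{lower} bound $|Df_a(\xi_{N+j}(a))|\geq Ce^{-K\alpha(N+j)}$, with $\alpha\ll\ga_2$. This matters because the ratio of the parameter error to the spatial term at step $j$ is of order $\frac{B|a-b|}{|Df_a(\xi_{N+j}(a))|\,|\xi_{N+j}(a)-\xi_{N+j}(b)|}\lesssim \frac{e^{-\ga_2(N+j)}}{|Df_a(\xi_{N+j}(a))|}$: without the lower bound on $|Df_a|$, a single very deep return could make a factor in the product $\prod(1-\cdots)$ negative or large, and the cumulative constant $Q_1$ would not exist. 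With the slow recurrence bound, the ratio decays like $e^{-(\ga_2-K\alpha)(N+j)}$, the series is summable, and the product is bounded below uniformly in $k$. You also skip the small but necessary preliminary observation that $|c_l(a)-c_l(b)|\ll|\xi_{N+j}(a)-\xi_{N+j}(b)|$ (from the analyticity of $c_l(a)$ and \eqref{ep-str1}), which justifies treating $Crit_a$ and $Crit_b$ as interchangeable in hypothesis ii). With these two points corrected your plan matches the paper's argument.
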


\begin{proof}
Since we assume that the critical points $c_l(a)$ move analytically in $a$ we have
\[
c_l(a) = K_l a^{k_l} + \OO(a^{k_l+1}).
\]

Let us fix $l$ and consider $\xi_{n,l}(a) =\xi_n(a)$. If we consider a sufficiently small parameter interval $(-\vep,\vep)$ centred at $a=0$ corresponding to $f_0$, then, by bounded distortion, we can make $\vep$ so small so that we have, for any two points $a,b \in (-\vep, \vep)$, 
\[
|\xi_{N}(a) - \xi_{N}(b) \sim_2 |\xi_{N}'(c)||a-b|,
\]
for any $c \in (-\vep,\vep)$. 
From the assumption $|Df_a^N(v_l(a))| \geq C_1 e^{\ga_1 N}$ we see that we may choose $\vep > 0$ small enough to get
$|(f_b^N)'(v_l(b))| \geq C_1 e^{\ga_2 N}$ for $b \in \om = (-\vep,\vep)$ for some $\ga_2 > 0$ slightly smaller than $\ga_1$.
From Lemma \ref{transv1} we now get, with $q \leq 1/2$ and $L=|L_l|$, for any $c \in (a,b)$, 
\begin{multline}  \label{ep-str1}
  |\xi_{N}(a) - \xi_{N}(b) | \sim |\xi_{N}'(c)| |a-b| \geq q L |a-b| |Df_{c}^{N-1}(v_l(c))|  \\
                            \geq C_1 q L e^{\ga_2 (N-1)} |a-b|  
                            = C_1' e^{\ga_2 N} |a-b|,   
\end{multline}
where $C_1' = C_1 q L e^{-\ga_2}$.

During the first $N$ iterates, (\ref{ep-str1}) implies that for $a$ and $b$ close to $0$ we have
\[
  |c_l(a) - c_l(b)| \leq 2K_l k_l a^{k_l-1} |a-b| \leq C a^{k_l-1} |\xi_{N,l}(a) - \xi_{N,l}(b)| e^{-\ga_2 N}, 
\]
for some constant $C$. It follows that
\begin{equation} \label{slow-movement}
  |\xi_{N,l}(a) - \xi_{N,l}(b)| \gg |c_l(a) - c_l(b)|
\end{equation}
for all critical points.

% for some $\ga_2 > 0$, slightly smaller than $\ga_1$.
Suppose that, for all $0 \leq j \leq k \leq k_1 - 1$, we have
\begin{equation} \label{ep-str2}
|\xi_{N+j}(a) - \xi_{N+j}(b) | \geq C_2 e^{\ga_2 j} |\xi_N(a) - \xi_N(b)|.
  \end{equation}
We may assume that $\ga_2 < \ga_1 < \ga_0$. Combining (\ref{ep-str2}) and (\ref{ep-str1}) we conclude that (\ref{slow-movement}) holds for $N$ replaced by $N+j$. 
  
For the proof, put $\xi_n(a) = \xi_{n,l}(a)$. Since we assume that the orbit of $w=\xi_n(a)$ stays close to $z=\xi_n(b)$ we have a distortion estimate
\[
\frac{1}{C} \leq \frac{|Df_a(w)|}{|Df_b(z)|} \leq C,
\]
for some constant $C \geq 1$. This constant can be arbitrarily close to $1$ if $|z-w| \leq S$ and $S$ is small enough (for $z,w \notin U$) and $|z-w| \leq e^{-r}/r^2$ (if $\dist(z,Crit) \in (e^{-r-1},e^{-r})$). Hence iterating this we get 
\[
|Df_b^k(\xi_N(b))| \geq \frac{1}{C^k} |Df_a^k(\xi_N(a))| \geq C_2 e^{\ga_2 k}, 
\]
for some $\ga_2 > 0$ slightly smaller than $\ga_1$ (at least we may assume, for instance, that $\ga_2 \geq (9/10) \ga_1$).  

%As long as $\xi_{N+k}(a)$ and $\xi_{N+k}(0)$ stay close to each other (i.e. $|\xi_{N+k}(a) - \xi_{N+k}(0)| \leq S$), we have
%\[
%|f_a(\xi_{N+k+1}(a)) - f_a(\xi_{N+k+1}(0))| \sim |f_a'(a)||\xi_{N+k}(a) - \xi_{N+k}(0)|.
%\]

With $B = \sup |\partial_a f_a|$, using (\ref{ep-str1}) and (\ref{ep-str2}), there is some $Q_0 > 1$ such that
\begin{multline}
  |\xi_{N+k+1}(a) - \xi_{N+k+1}(b) |  \\
  \geq \bigl| |f_a(\xi_{N+k}(a)) - f_a(\xi_{N+k}(b))|  - |f_a(\xi_{N+k}(b)) - f_b(\xi_{N+k}(b))| \bigr| \nonumber \\
                                 \sim_{Q_0} |Df_a(\xi_{N+k}(a))| |\xi_{N+k}(a) - \xi_{N+k}(b)|  - |\partial_a f_a (\xi_{N+k}(a))| |a-b| \nonumber \\
                  \geq |Df_a(\xi_{N+k}(a))| |\xi_{N+k}(a) - \xi_{N+k}(b)|  - \frac{B}{C_1'C_2} e^{-\ga_2 (N+k)} |\xi_{N+k}(a) - \xi_{N+k}(b)| \nonumber \\
                                     = \biggl( |Df_a(\xi_{N+k}(a))| - \frac{B}{C_1'C_2} e^{-\ga_2 (N+k)} \biggr) |\xi_{N+k}(a) - \xi_{N+k}(b)|. \label{ind-k}
\end{multline}
It is easy to check that a reverse inequality also holds. Note that $Q_0$ can be chosen arbitrarily close to $1$ if $N$ is large enough and $S = \de \vep_1$ is small enough (i.e. $\vep_1$ small enough). 
Repeating this $k$ more times we get
\begin{multline}
  |\xi_{N+k+1}(a) - \xi_{N+k+1}(b) | \\
  \sim_{Q_0^{k+1}} |Df_a^{k+1}(\xi_N(a))| \prod_{j=0}^{k} \biggl( 1 - \frac{Be^{-\ga_2 (N+j)}}{C_1' C_2 |f_a'(\xi_{N+j}(a))|} \biggr) |\xi_N(a) - \xi_N(b)|.
  \end{multline}

Now we use that $f_a$ is slowly recurrent (actually this implies that $f_b$ is also slowly recurrent since $\xi_n(a)$ stays close to $\xi_n(b)$). But we have assumed from the beginning that the exponent in basic assumption (which is inherited from the slow recurrence condition) satisfies $\al \leq (1/(400K\Ga)) \min(\ga_H,\ga_0)(1-\tau) \ll \ga_1$. Then $\al$ is also much smaller than $\ga_2$, and, if $N$ is sufficiently large, for some constant $C > 0$,
\[
|Df_a(\xi_{N+j}(a))| \geq Ce^{-K \alpha (N+j)} \gg e^{-\ga_2 (N+j)}, \text{ for all $0 \leq j \leq k_1$.} 
\]
We see that the sum $\sum_{j=0}^{\infty} e^{-(\ga_2 - \al)(N+j)}$ can be made as small as we like. Therefore, the product
\[
 \prod_{j=0}^{k} \biggl( 1 - \frac{Be^{-\ga_2 (N+j)}}{C_1' C_2 |f_a'(\xi_{N+j}(a))|} \biggr) \geq  \prod_{j=0}^{\infty} (1 - Ce^{-(\ga_2  +\al) (N+j)}) > \frac{1}{Q_1},
\]
for some $Q_1 > 1$ (independent of $k$). Therefore, 
\begin{equation}
  |\xi_{N+k+1}(a) - \xi_{N+k+1}(b) | \sim_{Q_0^{k+1} Q_1} |Df_a^{k+1}(\xi_N(a))|  |\xi_N(a) - \xi_N(b)| \\ \label{endpoint-stretch}
  \end{equation}
  Since $|Df_a^{k+1}(\xi_N(a))| \geq C_2 e^{\ga_1 (k+1)}$ we have $Q_0^{k+1}Q_1 |Df_a^{k+1}(\xi_N(a))| \geq C_2 e^{\ga_2 (k+1)}$, for some $\ga_2 > 0$ slightly smaller than $\ga_1$, given that $Q_0$ and $Q_1$ are sufficiently close to $1$. Hence we have (\ref{ep-str2}) satisfied with $k$ replaced by $k+1$ and we can continue the same argument and obtain (\ref{ep-str2}) up until $k_1$. This settles both claims with $Q^k=Q_0^kQ_1$.
\end{proof}

%We can use (\ref{endpoint-stretch}) to get, with Lemma \ref{transv1},  
%  \begin{align}
%    |\xi_n(a) - \xi_n(b)| &\sim |Df_a^{n-N}(\xi_N(a))| |\xi_N(a) - \xi_N(b)| \\
%                          &\sim  |Df_a^{n-N}(\xi_N(a))| |Df^{N-1}(v(a))| |a-b| \\
%    &\sim |Df_a^n(v(a))||a-b| 
%  \end{align}

We can easily get a little more general statement. If $f_a$ satisfies the CE-condition and is slowly recurrent up until time $N + k_1$, we can use (\ref{ind-k}), and the following arguments to obtain 
    \begin{equation} \label{weak-distortion2}
|\xi_n(a) - \xi_n(b)| \sim_{Q^j} |Df_a^j(\xi_{n-j}(a))||\xi_{n-j}(a) - \xi_{n-j}(b)|,
\end{equation}
if $n - j \geq N$ and $n \leq N + k_1$. The details are left to the reader.

\begin{Rem} \label{paramindep-rem}
  We have seen that the parameter dependence is inessential as long as the derivative of $|f_a^n(v_l(a))|$ grow with a certain Lyapunov exponent $\ga_1$. We call this the {\em weak parameter dependence property}. We will also require that the Lyapunov exponent never goes below a certain ``critical'' level, which is related to the ``intermediate level'' $\ga_I = (1/3) \min(\ga_H, \ga_0)(1-\tau)$. Since $\ga_L = (1/6) \min(\ga_H, \ga_0) (1-\tau) < (1/4) \min(\ga_H, \ga_0) (1-\tau)$, then $\ga_C = (1/4) \min(\ga_H, \ga_0) (1-\tau)$ (the critical exponent) as a lower bound for $\ga_1$ will do. We also let $\ga_B = (3/4) \min(\ga_H, \ga_0) (1-\tau)$. This $\ga_B$ is the Lyapunov exponent that we want to keep at the end. 
  %is to be determined later in Section \ref{large-deviations}. 
\end{Rem}

\section{Distortion and expansion during the bound period}  \label{bound-distortion}

We use the following notations. Below $\om$ is assumed to be an interval and a partition element according to Definition \ref{partition-element}. 
\begin{Def}
  We say that $a \in \EE_{n,l}(\ga)$ if
  \begin{align}
    |Df_a^n(v_l(a))| &\geq C_0 e^{\ga k}, \text{  for all $ k \leq n-1$, and} \label{exp1} \\
     |Df_a^n(v_j(a))| &\geq C_0 e^{\ga k}, \text{  for all $ k \leq (2K \al/\ga_I) n$, and all $j \neq l$.} \label{exp2}
    \end{align}
    We say that $a \in \BB_{n,l}$ if
    \begin{align}
      \dist(\xi_{k,l}(a),Crit_a) &\geq K_b e^{-2\al k}, \text{ for all $k \leq n$ and}
      \label{ba1} \\
      \dist(\xi_{k,j}(a),Crit_a) &\geq K_b e^{-2\al k}, \text{ for all $k \leq (2K \al/\ga_I) n$ and all $j \neq l$.} \label{ba2}
    \end{align}
We say that $\om \subset \EE_{n,l,\star}(\ga)$  if (\ref{exp1}) holds and (\ref{exp2}) holds with $(2K \al/\ga_I)$ replaced by $4(K \al/\ga_I)$. We say that $\om \subset \BB_{n,l,\star}$ if (\ref{ba1}) holds and (\ref{ba2}) holds with $(2K \al/\ga_I)$ replaced by $4(K \al/\ga_I)$.
    \end{Def}

Note that $\om_0 \subset \EE_{N,l}(\ga) \cap \BB_{N,l}$ for all $l$ for some $\ga$ close to $\ga_0$. The definitions above is tailored so that if an interval belongs to $\EE_{n,l}(\ga)$ or $ \BB_{n,l}$ then we can use the binding information for the other critical points up until some fraction $2K\al/\ga_I$ of the time $n$. The star is added to be able to use the binding information longer and continue the parameter-exclusion construction up until $2n$. 

To prove bounded distortion, we will frequently make use of the following lemma, which is standard.

\begin{Lem}  \label{xexp}
Given complex numbers $z_1, \ldots, z_n$ we have
\[
\biggl| \prod_{j=1}^n z_j  - 1 \biggr| \leq -1 + \exp{\sum_{j=1}^n |z_j - 1|}. 
\]
\end{Lem}

Expanding $f$ in Taylor series near a critical point $c$ gives
\[
f_a(z) = A(z-c)^k + \OO((z-c)^{k+1}), \qquad Df_a(z) = Ak(z-c)^{k-1} + \OO((z-c)^k),
\]
where $A$ is analytic in the parameter $a$. If $z$ and $w$ are close to $c$ and $|z-c| \sim |w-c|$, we get,
\begin{multline} \label{T-exp}
  Df_a(z) - Df_a(w) = Ak(z-w)( (z-c)^{k-2} + (z-c)^{k-3}(w-c) + \ldots \\
  + (w-c)^{k-2}  + \OO((z-c)^{k-1})). 
\end{multline}
Hence, 
\[
\sum_{j=1}^{n} \frac{|Df_a(\xi_j(a)) - Df_a(\xi_j(b))|}{|Df_a(\xi_j(b))|} 
\sim_{2k} \sum_{j=1}^{n} \frac{|\xi_j(a)) - \xi_j(b)|}{\dist(\xi_j(b),Crit_b)}.
\]
if $z$ and $w$ are sufficiently close to $Crit(f)$. In Section \ref{strong-distortion} we will allow the parameter to vary as well, and have to go a bit further. 

\begin{Lem}[Distortion during the bound period] \label{bound-dist}
Let $\vep' > 0$. Then if $\de'=e^{-\De'}$ is sufficiently small and $N$ sufficiently large, the following holds.  Let $z=\xi_{\nu,l}(a)$ be a free return into $U_i'$, $\nu \geq N$, where $a \in \EE_{\nu,l}(\ga) \cap \BB_{\nu,l}$ for some $\ga \geq \ga_I$. Then we have, for all $w$ on the line segment between $f_a(z)$ and $\xi_{1,i}(a) = v_i(a)$, 
\[
\biggl| \frac{Df_a^j(w)}{Df_a^j(v_i(a))} - 1 \biggr| \leq \vep',
\]
for $j \leq p$, where $p$ is the length of the bound period for $z$.
\end{Lem}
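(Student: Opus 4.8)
The plan is to control the ratio $Df_a^j(w)/Df_a^j(v_i(a))$ by a telescoping/logarithmic estimate of the form
\[
\biggl| \frac{Df_a^j(w)}{Df_a^j(v_i(a))} - 1 \biggr| \leq -1 + \exp \sum_{s=0}^{j-1} \frac{|Df_a(f_a^s(w)) - Df_a(f_a^s(v_i(a)))|}{|Df_a(f_a^s(v_i(a)))|},
\]
which is exactly Lemma \ref{xexp} applied to the factors $z_s = Df_a(f_a^s(w))/Df_a(f_a^s(v_i(a)))$. So the whole problem reduces to showing that the sum on the right is $\leq \log(1+\vep')$, i.e. can be made as small as we like by taking $\de'$ small and $N$ large. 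To do this, I would first establish the auxiliary fact that the orbit $f_a^s(w)$ shadows the original critical orbit $\xi_{s+1,i}(a)$ throughout the bound period $s < p$, with $|f_a^s(w) - \xi_{s+1,i}(a)| \lesssim e^{-\be s}\dist(\xi_{s+1,i}(a),Crit_a)$; this is essentially the definition of the bound period, extended from the endpoints/interval to the intermediate point $w$ on the segment, together with the fact that $f_a(z)$ is close to $v_i(a)$ because $z$ is a deep return into $U_i'$ (so $|z - c_i(a)|$ is small, hence $|f_a(z)-v_i(a)| \sim |z-c_i(a)|^{k}$ is small, where $k$ is the multiplicity).

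Next I would split each summand using the distortion estimate for $Df_a$ near a critical point already recorded in the excerpt (display \eref{T-exp}, and the line just after it): when $f_a^s(v_i(a))$ is close to $Crit_a$, one has
\[
\frac{|Df_a(f_a^s(w)) - Df_a(f_a^s(v_i(a)))|}{|Df_a(f_a^s(v_i(a)))|} \sim_{2k} \frac{|f_a^s(w) - \xi_{s+1,i}(a)|}{\dist(\xi_{s+1,i}(a),Crit_a)},
\]
and when $f_a^s(v_i(a))$ is away from $Crit_a$ the left side is simply $\lesssim |f_a^s(w) - \xi_{s+1,i}(a)|$ with a uniform Lipschitz constant for $Df_a$ off the critical set. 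In the first case the shadowing bound gives a summand $\lesssim e^{-\be s}$; in the second case I would use that $|f_a^s(w)-\xi_{s+1,i}(a)|$ grows at most like $e^{\Ga s}$ times $|f_a(z)-v_i(a)|$ but is at the same time $\leq e^{-\be s}\dist(\cdot,Crit_a)$, which is $\leq e^{-\be s}$; alternatively, and more cleanly, one compares with the original orbit's expansion: since $a \in \EE_{\nu,l}(\ga)\cap\BB_{\nu,l}$ with $\ga \geq \ga_I$, $|Df_a^s(v_i(a))|\geq C_0 e^{\ga_I s}$, and $|f_a^s(w)-v_i(a)^{(s)}| \sim |Df_a^s(v_i(a))|\,|f_a(z)-v_i(a)|$ up to the very distortion factor we are estimating, so one gets a self-improving bound. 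In all cases the summand is bounded by a constant times $e^{-\be s}$ (recall $\be = \al > 0$), hence $\sum_{s=0}^{j-1}(\cdots) \leq \sum_{s=0}^{\infty} C e^{-\be s} =: \eta$, and $\eta$ can be made as small as desired — the dependence on $N$ and $\de'$ enters because the very first terms ($s$ small) are controlled by $|f_a(z)-v_i(a)| \leq C\de'^{k}$ being tiny, and because the shadowing only kicks in once $N$ is large enough that Lemma \ref{transv1} and the weak distortion Lemma \ref{weak-distortion} apply.

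The main obstacle I expect is the bookkeeping when $f_a^s(v_i(a))$ itself makes a close return to $Crit_a$ during the bound period (a "return within a bound period"): there the denominator $\dist(\xi_{s+1,i}(a),Crit_a)$ is small and one must be sure that the numerator $|f_a^s(w)-\xi_{s+1,i}(a)|$ is smaller by the factor $e^{-\be s}$ — which is precisely guaranteed by the definition of the bound period, but one has to check that the shadowing survives passing near the critical set (it does, because the bound-period inequality is stated with $\dist(\cdot,Crit_a)$ on the right-hand side, so the relative error stays $\leq e^{-\be s}$ even across such a return). The slow recurrence / basic assumption $\dist(\xi_{k,i}(a),Crit_a)\geq K_b e^{-2\al k}$ then ensures these denominators do not decay faster than $e^{-2\al s}$, so even the crude bound $|Df_a(f_a^s(v_i(a)))| \gtrsim \dist(\xi_{s+1,i}(a),Crit_a)^{k-1} \gtrsim e^{-2\al(k-1)s}$ on the small factors is harmless against the genuine expansion $|Df_a^s(v_i(a))|\geq C_0 e^{\ga_I s}$, since $\al$ was chosen $\ll \ga_I$. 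Assembling these, $\eta \leq \log(1+\vep')$ for $\de'$ small and $N$ large, which is the claim.
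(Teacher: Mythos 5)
Your reduction via Lemma \ref{xexp} to bounding the sum
\[
\sum_{s<j}\frac{|Df_a(f_a^s(w))-Df_a(f_a^s(v_i(a)))|}{|Df_a(f_a^s(v_i(a)))|}
\sim \sum_{s<j}\frac{|f_a^s(w)-\xi_{s+1,i}(a)|}{\dist(\xi_{s+1,i}(a),Crit_a)}
\]
is the right starting point and matches the paper. But the key estimate has a real gap. You bound each summand by $Ce^{-\be s}$ from the bound-period definition and then write $\sum_{s\geq 0}Ce^{-\be s}=:\eta$ "can be made as small as desired". It cannot: that geometric series equals $C/(1-e^{-\be})\sim C/\al$, a \emph{fixed} constant (and a large one, since $\al$ is tiny). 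Shrinking $\de'$ or increasing $N$ does not touch it. Your caveat that the first few terms are controlled by $|f_a(z)-v_i(a)|\lesssim \de'^k$ points in the right direction but is not enough: sharpening only finitely many initial terms still leaves $\sum_{s>M}Ce^{-\be s}\sim Ce^{-\be M}/\be$, which is not small unless $M\to\infty$ as $\de'\to 0$.

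What is missing is the split of the sum at a cutoff $J$ that \emph{grows linearly in $r$} (hence in $\De'$), which is exactly what the paper does: with $J=d r/(10(2\al+\Ga))$, for $s\leq J$ one uses the crude growth $|f_a^s(w)-\xi_{s+1,i}(a)|\leq|f_a(z)-v_i(a)|e^{\Ga s}\lesssim e^{-dr+\Ga s}$ against the basic approach rate $\dist(\xi_{s+1,i}(a),Crit_a)\geq K_be^{-2\al s}$, giving a geometric sum whose value is $\lesssim e^{-9dr/10}$; and for $s>J$ the bound-period bound $e^{-\al s}$ sums to $\lesssim e^{-\al J}$, which is small \emph{because} $J\propto r\geq\De'$. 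Both pieces tend to $0$ as $\De'\to\infty$, and that is the mechanism producing the $\vep'$ in the statement. Without the $r$-dependent cutoff your estimate stalls at a constant.

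A secondary issue: the paper first proves the estimate for $w=f_a(z)$ and then extends to intermediate $w$ on the segment by a self-improving argument (take $p'\leq p$ minimal over $w$ on the segment; if $p'<p$, the already-established small distortion up to time $p'$ forces the image of the segment to be nearly straight, so the minimizer must be the endpoint $z$, hence $p'=p$). Your sketch treats the extension as part of the definition of the bound period, which is not quite the same, since the interval-version bound period is defined over $a,b\in\om$ and $z\in\xi_{n,l}(\om)$, not over an arbitrary line segment between $f_a(z)$ and $v_i(a)$ at a fixed parameter $a$; you would need to supply the analogue of the paper's "almost straight image" argument to cover all such $w$.
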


\begin{proof}
We first prove the lemma for $w=f_a(z)$. Let $\dist(\xi_{\nu,l}(a), Crit_{a}) \sim_{\sqrt{e}} e^{-r}$ where $\xi_{\nu,l}(a) = z$ and put $z_j = f_a^j(z)$ and $\xi_{j,i}(a) = \xi_j(a)$. Following the discussion preceding the lemma, we estimate, for $\nu \geq N$, the sum
\[
\sum_{j=1}^{p} \frac{|Df_a(z_j) - Df_a(\xi_{j}(a))|}{|Df_a(\xi_{j}(a))|}  \leq C \sum_{j=1}^{p}  \frac{|z_j - \xi_{j}(a)|}{\dist(\xi_{j}(a),Crit_a) }.  
\]
The last sum can be divided into two subsums $[1,J] \cup [J+1,p]$ where $J = dr/(10(2 \al + \Ga))$, where $d$ the degree of $f_0$ at $c_k$, and $\Ga = \sup_{a \in (-\vep,\vep), z \in \hat{\C}} \log |f_a'(z)|$. Assuming that the basic approach rate assumption holds, the first sum an be estimated as
\[
  \sum_{j=1}^J \frac{|z_1 - \xi_1(a)|e^{\Ga j}}{K_b e^{-2 \al j}} \leq \sum_{j=1}^J CK_b^{-1}e^{-dr} e^{(\Ga + 2 \al)j } \leq
  \sum_{j=1}^J Ce^{-(9/10)dr} \leq Ce^{-9\De'/10}. 
 \]
 The second sum can be estimated using the definition of the bound period (remember $\be=\al$), 
\[
 \sum_{j=J+1}^p \frac{|z_j - \xi_j(a)|}{\dist(\xi_j(a),Crit_a) } \leq C \sum_{j=J+1}^p e^{-\al j} \leq C e^{-\al \frac{dr}{10(2 \al  + \Ga)}}.  
\]
We see that both sums can be made arbitrarily small if $\De'$ is large enough. This finishes the case $w=f_a(z)$.

It is easy to see that the same must hold one the line segment between $f_a(z)$ and $v_i(a)$. Let $p' \leq p$ be the least bound period for all such $w$ on this line. That means that up until $j=p'$ the distortion estimate holds for all $w$ on the line. But since $\vep'$ may be chosen very small this means that the image of the line  under $f_a^{p'}$ is an almost straight line too. It follows that the corresponding $w$ for $p'$ has to be $z$ in fact, so $p=p'$.  
\end{proof}

Note that the condition on $U'$ in the above lemma is only depending on the starting family of functions $f_a$, $a \in (-\vep,\vep)$. There is also a condition on $U'$ in Lemma \ref{oel}.  

\begin{Lem} \label{boundexp}
Suppose that $\xi_{\nu,l}(a)$ is a return into $U_i'$ and that $a \in \EE_{\nu,l}(\ga) \cap \BB_{\nu,l}$ for some $\ga \geq \ga_I$. Then if $N$ is large enough and $p$ is the length of the following bound period we have,  
\[
|Df_a^{p}(\xi_{\nu,l}(a))| \geq e^{\frac{\ga}{2d_i} p}, 
\]
where $d_i$ is the degree of $f$ at $c_i$.

Moreover, if $\dist(\xi_{\nu,l}(a),Crit_{a}) \sim_{\sqrt{e}} e^{-r}$, then 
\[
\frac{d_i r }{2 \Ga} \leq p \leq \frac{2 d_i r}{\ga}.
\]
In particular, $p \leq 2 \al d_i \nu /\ga$, where $\al$ is the exponent in the basic assumption and $\Ga = \sup\limits_{a \in (-\vep,\vep), z \in \hat{\C}} \log |Df_a(z)|$. 
\end{Lem}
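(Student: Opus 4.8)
The plan is to pin down the length $p$ of the bound period first, and then read the derivative estimate off it. Write $z=\xi_{\nu,l}(a)$, let $c=c_i(a)$ be the critical point with $z\in U_i'$, set $d=d_i$, and recall the defining inequality of the (pointwise) bound period: $|f_a^j(z)-\xi_{j,i}(a)|\le e^{-\be j}\dist(\xi_{j,i}(a),Crit_a)$ for $j\le p$, with $\be=\al$ and $\xi_{j,i}(a)=f_a^{j-1}(v_i(a))$. Since $z\in U_i'$ we have $\dist(z,Crit_a)=|z-c|\sim_{\sqrt{e}}e^{-r}$, with $r$ large because $\de'$ is small; expanding $f_a$ at $c$ as in \eqref{T-exp} gives $|f_a(z)-v_i(a)|\sim|z-c|^{d}$ and $|Df_a(z)|\sim|z-c|^{d-1}$, hence
\[
  |f_a(z)-v_i(a)|\ \sim\ \tfrac1d\,|z-c|\,|Df_a(z)|.
\]

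Next I would bound $p$ two-sidedly. For $j\le p$, Lemma~\ref{bound-dist} (distortion on the segment $[\,v_i(a),f_a(z)\,]$) upgrades this to $|f_a^j(z)-\xi_{j,i}(a)|\sim|Df_a^{j-1}(v_i(a))|\,|f_a(z)-v_i(a)|$ with multiplicative constant $1+\vep'$. Combining its lower half with the Collet--Eckmann bound $|Df_a^{j-1}(v_i(a))|\ge C_0e^{\ga(j-1)}$ (from \eqref{exp1} if $i=l$, from \eqref{exp2} if $i\ne l$) and with $|f_a(z)-v_i(a)|\gtrsim e^{-dr}$, while the threshold satisfies $e^{-\be j}\dist(\xi_{j,i}(a),Crit_a)\le\diam(\hat{\C})\,e^{-\al j}$, we see the defining inequality must fail once $e^{(\ga+\al)j}\gtrsim e^{dr}$; hence $(\ga+\al)p\le dr+\OO(1)$, in particular $p\le 2dr/\ga$. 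For the reverse bound use instead the crude Lipschitz estimate $|f_a^j(z)-\xi_{j,i}(a)|\le e^{\Ga(j-1)}|f_a(z)-v_i(a)|\lesssim e^{\Ga(j-1)}e^{-dr}$ against $\dist(\xi_{j,i}(a),Crit_a)\ge K_be^{-2\al j}$: the inequality still holds while $e^{(\Ga+3\al)j}\lesssim e^{dr}$, so $p\ge dr/(\Ga+3\al)\ge dr/(2\Ga)$. Finally the basic assumption at time $\nu$ gives $e^{-r}\gtrsim K_be^{-2\al\nu}$, i.e. $r\le 2\al\nu+\OO(1)$, so $p\le 2\al d\nu/\ga$; this also checks a posteriori (using $\ga\ge\ga_I$, $d\le K$, and the slack in the definitions) that $p$ stays inside the ranges in \eqref{exp2}, \eqref{ba2}, so the Collet--Eckmann and basic bounds for $v_i$ used above are legitimate.

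For the derivative estimate, take $j=p$ in the two-sided comparison and factor $|Df_a^p(z)|=|Df_a(z)|\,|Df_a^{p-1}(f_a(z))|$; since $f_a(z)$ is an endpoint of the segment in Lemma~\ref{bound-dist}, $|Df_a^{p-1}(f_a(z))|\sim|Df_a^{p-1}(v_i(a))|$, and with the displayed Taylor identity this yields
\[
  |Df_a^p(z)|\ \sim\ d\,\frac{|f_a^p(z)-\xi_{p,i}(a)|}{|z-c|}\ \ge\ \frac{|f_a^p(z)-\xi_{p,i}(a)|}{|z-c|}.
\]
Now bound the numerator from below at the very end of the bound period: $j=p+1$ violates the defining inequality, so $|f_a^{p+1}(z)-\xi_{p+1,i}(a)|>e^{-\be(p+1)}\dist(\xi_{p+1,i}(a),Crit_a)\ge K_be^{-3\al(p+1)}$, whence $|f_a^p(z)-\xi_{p,i}(a)|\ge e^{-\Ga}K_be^{-3\al(p+1)}$. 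Since $|z-c|\le\sqrt{e}\,e^{-r}$ this gives $|Df_a^p(z)|\gtrsim K_b\,e^{\,r-3\al p}$. Insert the sharp upper bound $(\ga+\al)p\le dr+\OO(1)$, i.e. $r\ge\frac{\ga+\al}{d}p-\OO(1)$; because $\al$ was fixed so small that $3\al\ll\ga/(2d)$ (this is precisely where $\al\le\min(\ga_0,\ga_H)(1-\tau)/(400K\Ga)$, together with $\ga\ge\ga_I$ and $d\le K$, is used),
\[
  r-3\al p\ \ge\ \Big(\tfrac{\ga}{2d}+\tfrac{\al}{d}-3\al\Big)p-\OO(1)\ \ge\ \tfrac{\ga}{2d}\,p,
\]
the last step using that $p\ge d\De'/(2\Ga)$ is large. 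Absorbing $K_b$ the same way gives $|Df_a^p(z)|\ge e^{\frac{\ga}{2d}p}$.

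The main obstacle is precisely this derivative bound. The naive route---$|Df_a(z)|\sim e^{-(d-1)r}$ times the Collet--Eckmann recovery $e^{\ga(p-1)}$---fails, because the only elementary lower bound on $p$ is the $\Ga$-controlled $p\gtrsim dr/\Ga$ and $\Ga\gg\ga$, so that estimate is exponentially \emph{decaying} in $p$. One must instead express $|Df_a^p(z)|$ through the endpoint separation $|f_a^p(z)-\xi_{p,i}(a)|$ and then exploit the \emph{sharp} upper bound $p\lesssim dr/\ga$, so that the genuine gain $e^{r}\gtrsim e^{\ga p/d}$ beats the slow-recurrence loss $e^{-3\al p}$; keeping the three scales $\ga$ (Collet--Eckmann), $\Ga$ (global bound) and $\al$ (slow recurrence) straight, and checking $p$ never escapes the admissible ranges, is where the care goes, though everything is forced by the parameters fixed at the outset.
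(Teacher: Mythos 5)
Your proof is correct in substance and tracks the paper's logic for the two-sided bound on $p$: the upper bound comes from the Collet--Eckmann growth of $|Df_a^{j-1}(v_i(a))|$ against the trivial upper bound on the threshold, the lower bound from the crude Lipschitz estimate against the basic assumption, and $p\le 2\al d_i\nu/\ga$ then follows from $r\lesssim 2\al\nu$; this is essentially identical to the paper's argument, and your ``a posteriori'' check that $p$ stays within the admissible ranges of \eqref{exp2}, \eqref{ba2} is a point the paper leaves implicit. The derivative estimate is where you diverge: the paper bounds $D_{p+1}$ directly by combining $D_{p+1}e^{-r}\sim_{\ka_1}E_p e^{-d_i r}$ with the separation lower bound $D_{p+1}e^{-r}\gtrsim K_be^{-3\al(p+1)}$, raising to the power $(d_i-1)/d_i$ and inserting $E_p\ge(C_0/2)e^{\ga p}$, so it never needs the sharp upper bound on $p$. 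You instead factor $|Df_a^p(z)|=|Df_a(z)|\,|Df_a^{p-1}(f_a(z))|$, read it as a ratio of endpoint separations $\sim|f_a^p(z)-\xi_{p,i}(a)|/|z-c|$, bound the numerator from the violation at $j=p+1$, and only then feed in the sharp upper bound $p\lesssim d_i r/\ga$ so that $e^{r}$ converts into $e^{\ga p/d_i}$. Both routes are legitimate and yield the same numerology; yours is perhaps more transparent about where the CE information is being spent, the paper's a bit tighter constant-wise.

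There is one arithmetic slip near the end. From $r\ge\frac{\ga+\al}{d}p-\OO(1)$ you should get $r-3\al p\ge\bigl(\frac{\ga}{d}+\frac{\al}{d}-3\al\bigr)p-\OO(1)$, not $\bigl(\frac{\ga}{2d}+\frac{\al}{d}-3\al\bigr)p-\OO(1)$. As you wrote it, the next step would require $\bigl(\frac{\al}{d}-3\al\bigr)p\ge\OO(1)$, which is false since $\frac{\al}{d}-3\al<0$ for $d\ge2$. With the correct coefficient $\ga/d$ the argument closes: $\bigl(\frac{\ga}{d}+\frac{\al}{d}-3\al\bigr)p-\OO(1)=\frac{\ga}{2d}p+\bigl(\frac{\ga}{2d}+\frac{\al}{d}-3\al\bigr)p-\OO(1)\ge\frac{\ga}{2d}p$, using $\frac{\ga}{2d}\ge\frac{\ga_I}{2K}\gg 3\al$ and $p\gtrsim d\De'/(2\Ga)$ large. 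This is a typo-level fix, not a conceptual gap.
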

\begin{proof}
Put $D_j = |Df_a^j(\xi_{\nu,l}(a))|$ and $E_{j} = |Df_a^{j}(\xi_{\nu+1,l}(a))|$ for some $a \in \om$. We have $D_1 \geq CK_b e^{-2\al K \nu}$, since $a \in \BB_{n,l}$ for some constant $C$. Moreover, for $1 \leq j \leq p-1$, we can use Lemma \ref{bound-dist} to prove that $E_j \geq (C_0/2) e^{\ga j}$ since $a \in \EE_{\nu,l}(\ga)$. Hence the derivative
\[
|Df_a^{\nu+j}(v_l(a))| \geq (C_0/2) C K_b C_0 e^{(\ga-2\al K) (\nu+j)} \geq C_0e^{\ga' (\nu+j)} , \text{ for $j \leq p$},
\]
where $\ga' \geq \ga -4 \al K \geq \ga_C$, provided $N$ is large enough (recall $\nu \geq N$). We can also use Lemma \ref{bound-dist} to get
the following distortion estimate, for some $C > 1$ (close to $1$),
\[
|\xi_{\nu+j,l}(a) - \xi_{j,i}(a)| \sim_{C} |Df_a^j(\xi_{\nu,l}(a))||\xi_{\nu,l}(a) - \xi_{0,i}(a)|,
\]
for $j \leq p+1$. Suppose that $|\xi_{\nu,l}(a) - \xi_{0,i}(a)| \sim_2 e^{-r}$.  We know from the definition of the bound period and the basic assumption, that
  \begin{equation} \label{bound-length}
D_{p+1} e^{-r} \geq \frac{1}{4C} \dist(\xi_{p+1,i}(a),Crit_a) e^{-\al (p+1)} \geq  \frac{1}{4C} K_be^{-2\al (p+1) - \be (p+1)}.
    \end{equation}
Also we have, for some $\ka_1 \geq 1$,
\[
  D_{p+1} e^{-r} \sim_{\ka_1} E_{p} e^{-rd_i},
\]
and so
\begin{align}
  e^{-r(d_i-1)} &\sim_{\ka_1} \biggl( D_{p+1} e^{-r} \biggr)^{\frac{d_i-1}{d_i}} E_p^{-\frac{d_i -1}{d_i}}  \nonumber \\
  &\geq \biggl( \frac{K_b}{4C} \biggr)^{\frac{d_i-1}{d_i}}  e^{-(2 \al + \al) (p+1) \frac{d_i -1}{d_i} }   E_{p}^{-\frac{d_i-1}{d_i}} .
\end{align}
Now we can use that $2\al + \be$ is very small compared to $\ga \geq \ga_I$. We get, 
\begin{align}
  D_{p+1} &\sim_{\ka_1} e^{-r(d_i-1)} E_{p} \nonumber \\
          &\geq \biggr( \frac{K_b}{4C} \biggl)^{\frac{d_i-1}{d_i}} E_{p}^{\frac{1}{d_i}} e^{-3 \al (p+1) \frac{d_i-1}{d_i} } \nonumber \\
  &\geq  \biggl( \frac{C_0}{2}\biggr)^{\frac{1}{d_i}} \biggl(\frac{K_b}{4C} \biggl)^{\frac{d_i-1}{d_i}} e^{\frac{\ga}{d_i} p - 3 \al (p+1)} \geq e^{\frac{p}{2d_i}\ga},
\end{align}
if $\nu$ is sufficiently large. Since $D_p = D_{p+1}/|Df_a(\xi_{\nu+p}(a))|$, with minor modifications it is easy to see that the same estimate holds for $D_p$. 

To prove the second claim, we note that from (\ref{bound-length}), the slow recurrent condition and the fact that  $|Df^{\nu}(v_l(a))| \leq e^{\nu \Ga}$ we get that, for some very small $\al > 0$ in comparison to $\ga$,
\[
e^{\Ga (p+1)}e^{-d_i r} \geq E_p e^{-d_i r} \geq   \frac{K_b}{4C} \ka_1^{-1} e^{-3 \al (p+1)},
\]
which gives the left inequality if $\nu \geq N$ is large enough. To prove the right inequality, we note that the spherical distance $\dist(\xi_{\nu,l}(a),Crit_a)$ is bounded from above. By the definition of the bound period (now we are considering the time $p$ iterates from the return into $U$), and the fact that we also have $E_{p-1} e^{-d_i r} \sim_{\ka_1} D_p e^{-r}$,
\[
  (C_0/2)e^{\ga (p-1)} e^{-d_i r} \leq E_{p-1} e^{-d_i r} \leq 4C \ka_1  e^{-\al p} \dist(\xi_{p,i}(a),Crit_a). 
\]
and the right inequality follows. 
\end{proof}

The above lemma gives a quite substantial amount of increase of the derivative during the bound period, even if there is a loss in the first iterate. We can also see that under all circumstances, 
\[
|\xi_{\nu+p}(a) - \xi_{\nu+p}(b)| \geq |\xi_{\nu}(a) - \xi_{\nu}(b)|.
\]

%\begin{align}
%  |\xi_{n+k}(a) - &\xi_{n+k}(0) | \nonumber \\
%  &\geq \bigl| |f_a^k(\xi_{n}(a)) - f_a^k(\xi_{n}(0))|  - |f_a^k(\xi_{n}(0)) - f_0^k(\xi_{n}(0))| \bigr| \nonumber \\
%  &\sim |(f_a^k)'(\xi_{n}(a))| |\xi_{n+k}(a) - \xi_{n}(0)|  - |\partial_a f_a^k (\xi_{n}(a))| |a-0| \nonumber \\
%                                 &\sim |(f_a^k)'(\xi_{n}(a))| |\xi_{n}(a) - \xi_{n}(0)| - B^k e^{-\ga_0 n} |\xi_{n}(a) - \xi_{n}(0)| \nonumber \\
%                  &\sim (|(f_a^k)'(\xi_{n}(a))| - B^k e^{-\ga_0 n}) |\xi_{n}(a) - \xi_{n}(0)|
%                    &\leq (X - B^k e^{-\ga_0 n}) |\xi_{n}(a) - \xi_{n}(0)|,
%\end{align}
%by the slow recurrence condition.

\section{Strong distortion} \label{strong-distortion}

Our aim now is to use weak distortion and prove that we actually have something stronger, namely, for some small $\vep' > 0$,
\begin{equation} \label{distortion0}
\biggl| \frac{Df_a^{n}(v_l(a))}{Df_b^{n}(v_l(b))}  - 1 \biggr|  \leq \vep',
\end{equation}
for all $a,b \in \om$ where $\om$ is a partition element according to Definition \ref{partition-element}. We will also make use of the preliminary discussion in Section \ref{bound-distortion}. Let us first see a geometrical consequence of (\ref{distortion0}). By Lemma \ref{transv1} we have, for some small $0 < q < 1$, 
\begin{equation} \label{ddist}
\biggl| \frac{\xi_{n,l}'(a)}{Df_a^{n-1}(v_l(a))} - L_l \biggr| \leq q|L_l|
  \end{equation}
  for $n \geq N$  as long as $f_a$ satisfies the CE-condition with some exponent at least $\ga_L$ and where $N > 0$ is as in Lemma \ref{transv1}. So combining (\ref{distortion0}) and (\ref{ddist}) we get
\begin{equation} \label{good-geometry}
\biggl| \frac{\xi_{n,l}'(a)}{\xi_{n,l}'(b)} - 1 \biggl| \leq \ti{\vep}, \quad \text{ for all $a,b \in \om$},
\end{equation}
where $\ti{\vep} > 0$ is arbitrarily small given that $\vep'$ and $q$ are small enough. This means that the curve $\xi_{n,l}(\om)$ is almost straight, which will be important when we make partitions at returns. 

From now on, let us fix $l$ and write $\xi_{n,l}(a) = \xi_n(a)$. In the beginning we are going to follow orbits close to the original orbit $\xi_n(0)$, and then it is rather easy to see that nearby orbits also satisfy the CE-condition, but when considering nearby parameters $a$ close to $0$, after a long time we have to keep track of the derivative $Df_a^n(v_l(a))$, since the orbit of $\xi_n(a)$ and $\xi_n(0)$ become more or less independent.

Choose some small $\vep > 0$ and suppose that $\om \subset (-\vep,\vep)$. For $a,b \in \om$, consider (\ref{distortion0}). The distortion during the first $N$ iterates can be made arbitrarily small if the perturbation $\vep$ is small enough. So we only need to consider iterates after $N$, and hence focus on proving:
\begin{equation} \label{distortion1}
\biggl| \frac{Df_a^{n-N}(\xi_N(a))}{Df_b^{n-N}(\xi_N(b))}  - 1 \biggr|  \leq \vep'.
\end{equation}
The main task is to prove this stronger form of the space distortion. By Lemma \ref{xexp}, the distortion estimate (\ref{distortion1}) follows if we prove that    
\begin{equation} \label{distortion2}
\sum_{j=0}^{n-N-1} \biggl| \frac{Df_a(\xi_{N+j}(a)) - Df_b(\xi_{N+j}(b)}{Df_b(\xi_{N+j}(b))} \biggr| \leq \vep'',
\end{equation}
where $\vep' \raw 0$ as $\vep'' \raw 0$.

With $z=\xi_j(a)$ and $w=\xi_j(b)$, we have
\[
|Df_a(\xi_j(a)) - Df_b(\xi_j(b))| \leq |Df_a(z) - Df_a(w)| + |Df_a(w) - Df_b(w)|. 
  \]
We also see that, for some $a^* \in [a,b]$,
\begin{equation}
  |Df_a(w) - Df_b(w)| \leq  |a-b| |\partial_a Df_{a^*}(w)| \leq C |\xi_j(a) - \xi_j(b)| e^{-\ga_2 j} ,
 %                     &\leq C \bigl( |x_j(a)  - c(a)| + |c(a)-c(b)| + |\xi_j(b) - c(b)| \bigr) e^{-\ga_2 j} \\
 % &\leq C |\xi_j(a) - c(a)| e^{-\ga_2 j},
\end{equation}
for some constant $C > 0$ since $\partial_a Df(z)$ is bounded. 
  
%Hence, the sum (\ref{distortion2}) can be estimated as 
%\[
%\sum_{j=N}^{n-1} \frac{|Df_a(\xi_j(a)) - Df_b(\xi_j(b))|}{|Df_b(\xi_j(b))|} \leq C \sum_{j=N}^{n-1} \frac{|Df_a(\xi_j(a)) - Df%_a(\xi_j(b))|}{|Df_a(\xi_j(b))|} 
%\]
%for some constant $C$.
If $c$ is a critical point, using that $|z-c| \sim |w-c|$, for $z=\xi_j(a)$, $w=\xi_j(b)$, we get, using the Taylor expansion of $f$ near $c$, see  (\ref{T-exp}), that
\[
\sum_{j=1}^{n} \frac{|Df_a(\xi_j(a)) - Df_b(\xi_j(b))|}{|Df_b(\xi_j(b))|} 
\sim_{2k} \sum_{j=1}^{n} \frac{|\xi_j(a)) - \xi_j(b)|}{\dist(\xi_j(b),Crit_b)}.
\]
if $z$ and $w$ are sufficiently close to $Crit(f)$. We will therefore estimate the sum
\begin{equation} \label{sum-dist}
\hat{S} = \sum_{j=N}^{n} \frac{|\xi_{j}(a) - \xi_{j}(b)|}{\text{dist} (\xi_{j}(b), Crit_b) }.
\end{equation}

\begin{Lem} \label{adjacent-returns}
If $N$ is large enough we have the following. Suppose that $\nu_k \geq N$ is a return time and that $\xi_{\nu_k,l}(a)$ is a free return into $U'$ (essential or inessential or a pseudo return), $a \in (-\vep,\vep)$. Moreover, we suppose that $a \in \EE_{\nu_k,l}(\ga) \cap \BB_{\nu_k,l}$, where $\ga \geq \ga_I$. Then until the next free return, we have, 
\[
|Df_a^{\nu_{k+1}}(v_a)| \geq e^{\ga_1 \nu_{k+1}},
\]
where $\ga_1 \geq (9/10) \min(\ga, \ga_H)$. 
\end{Lem}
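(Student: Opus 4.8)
\emph{Plan.} The plan is to break the orbit segment of the critical value $v_l(a)$ from time $0$ up to time $\nu_{k+1}$ into three consecutive pieces --- the stretch up to the return time $\nu_k$, the bound period of length $p$ that follows it, and the ensuing free period of length $m=\nu_{k+1}-\nu_k-p$ --- to estimate the iterated derivative $|Df_a^n(v_l(a))|$ over each piece, and then to multiply. Because the desired inequality carries no multiplicative constant in front, the last step will be to absorb the (fixed) constants accumulated along the way into an arbitrarily small loss in the exponent; this is allowed since $\nu_{k+1}\ge\nu_k\ge N$ and $N$ may be chosen as large as we please.

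\emph{First two pieces.} On $[0,\nu_k]$ nothing needs to be done: $a\in\EE_{\nu_k,l}(\ga)$ gives $|Df_a^j(v_l(a))|\ge C_0e^{\ga j}$ for $j\le\nu_k-1$. For the bound period, $\xi_{\nu_k,l}(a)$ is a return into some $U_i'$ and $a\in\EE_{\nu_k,l}(\ga)\cap\BB_{\nu_k,l}$ with $\ga\ge\ga_I$, so Lemma~\ref{boundexp} applies; in fact its proof yields the sharper estimate
\[
|Df_a^{\nu_k+j}(v_l(a))|\ \ge\ C_0\,e^{\ga'(\nu_k+j)},\qquad 0\le j\le p,\quad \ga'\ge\ga-4\al K .
\]
(This step uses the binding information for the critical point $c_i$ hosting the return, which may well differ from $c_l$; it is legitimate because Lemma~\ref{boundexp} bounds the bound period by $p\le 2\al d_i\nu_k/\ga\le(2K\al/\ga_I)\nu_k$, using $d_i\le K$ and $\ga\ge\ga_I$, so condition \eref{exp2} in the definition of $\EE_{\nu_k,l}(\ga)$ does span the whole bound period.) Since $\al$ was fixed much smaller than $\min(\ga_0,\ga_H)(1-\tau)/(400K\Ga)$ and $\ga\ge\ga_I=(1/3)\min(\ga_H,\ga_0)(1-\tau)$, we have $4\al K\le\ga/10$, hence $\ga'\ge(9/10)\ga$.

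\emph{Third piece and conclusion.} Set $z=\xi_{\nu_k+p,l}(a)$. If $m=0$ there is nothing more to prove. If $m\ge1$ then, as $\nu_{k+1}$ is by definition the first free return into $U'$ occurring after the bound period, $\xi_{\nu_k+p+j,l}(a)\notin U'$ for $0\le j\le m-1$; in particular $z\notin U$ and $f_a^j(z)\notin U$ for $0\le j\le m-1$, so Lemma~\ref{oel} gives $|Df_a^m(z)|\ge Ce^{-K\De}\la^m$ with $\log\la$ at least $\ga_H$. (If $\nu_{k+1}$ happens to be a genuine return into $U$ one may use instead the second, $\de$-independent, statement of Lemma~\ref{oel}, but that refinement is not needed here.) Multiplying the two pieces gives
\[
|Df_a^{\nu_{k+1}}(v_l(a))| = |Df_a^{\nu_k+p}(v_l(a))|\cdot|Df_a^m(z)| \ge C_0Ce^{-K\De}\,e^{\ga'(\nu_k+p)+(\log\la)m} \ge C_0Ce^{-K\De}\,e^{\min(\ga',\ga_H)\,\nu_{k+1}} .
\]
The prefactor $C_0Ce^{-K\De}$ does not depend on $\nu_k$ or $k$, so for $N$ large and $\nu_{k+1}\ge N$ it exceeds $e^{-\eta\nu_{k+1}}$ with $\eta>0$ as small as we wish. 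Since $\ga'\ge(9/10)\ga$, a brief case check (according to whether $\ga_H\ge\ga$ or $\ga_H<\ga$) shows $\min(\ga',\ga_H)-\eta\ge(9/10)\min(\ga,\ga_H)$ once $\eta$ is small compared with $\ga$, $\ga_H$ and $\ga-\ga_H$; this proves the lemma with $\ga_1=\min(\ga',\ga_H)-\eta$. Carrying out the same estimate with the free period truncated at an arbitrary intermediate length shows that the bound in fact holds at every time between $\nu_k$ and $\nu_{k+1}$.

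\emph{Main obstacle.} The only genuinely delicate point is the quantitative bookkeeping of exponent losses: the decrease from $\ga$ to $\ga'\ge\ga-4\al K$ over the bound period (which stems from the crash of the derivative at the deep return together with \eref{ba1}) plus the constant-absorption loss $\eta$ must not push the final exponent below $(9/10)\min(\ga,\ga_H)$ --- this is exactly what the standing smallness assumption on $\al$ is tailored to ensure. A minor side issue, already addressed, is guaranteeing that the bound period does not outlast the time interval over which the binding data for the host critical point $c_i$ is available.
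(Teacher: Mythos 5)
Your proposal is correct and follows essentially the same route as the paper: split the orbit of $v_l(a)$ at times $\nu_k$ and $\nu_k+p$, use $\EE_{\nu_k,l}(\ga)$ on the first block, Lemma~\ref{boundexp} (together with the length bound $p\le 2K\al\nu_k/\ga$ so that \eref{exp2} covers the bound period) on the second, Lemma~\ref{oel} on the free block, and then absorb the accumulated multiplicative constants into the exponent using $\nu_{k+1}\ge N$ large. The only cosmetic difference is that you quote the sharper in-proof form $|Df_a^{\nu_k+j}(v_l(a))|\ge C_0e^{\ga'(\nu_k+j)}$ from Lemma~\ref{boundexp} and the first ($\de$-dependent) clause of Lemma~\ref{oel} where the paper uses the lemma's stated bound and the second clause, but both yield the same conclusion.
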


\begin{proof} 
During the bound period $p_k$ starting directly after the return $\nu_k$, we see from Lemma \ref{boundexp} that 
\[
|Df^{\nu_k + p_k}(v_l(a))| \geq C_0e^{\ga \nu_{k}} e^{\frac{\ga}{2K} p_k},
\]
for each $a \in \om$. Moreover, note that $p_k \leq (2 K \al/\ga) \nu_k$ from Lemma \ref{boundexp}.  After that the free period starts, and by the outside expansion Lemma \ref{oel} we get
\[
|Df^{\nu_{k+1}}(v_l(a))| \geq C_0 C' e^{\ga \nu_{k}} e^{\frac{\ga}{2K} p_k} e^{\ga_H (\nu_{k+1} - (\nu_k + p_k))} \geq e^{\ga_1 \nu_{k+1}},
\]
for some $\ga_1 \geq (9/10) \min(\ga,\ga_H) $ if $N$ is large enough. 
\end{proof}

If we consider a return of $\xi_n(\om)$, where $\om$ is a partition element, we have seen by Lemma \ref{weak-distortion}, that we may disregard from the parameter dependence inside $\om$ as long as the space derivative grows exponentially. The above lemma ensures that $\ga_1 \geq \ga_C$. So, by the weak parameter dependence property, we have
\[
|\xi_{\nu'}(a) -\xi_{\nu'}(b)| \sim_{Q^{\nu'-\nu}} |Df_a^{\nu'-\nu}(\xi_{\nu}(a))||\xi_{\nu}(a) -\xi_{\nu}(b)|
\]
for all $a,b \in \om$. Since $|Df_a^{\nu'-\nu}(\xi_{\nu}(a))| Q^{-(\nu'-\nu)}$ is much greater than $1$ ($\log Q < \al \ll \ga_1$), it follows that two orbits $\xi_{n,l}(a)$ and $\xi_{n,l}(b)$ repel each other up to some large scale or until the next return takes place. We get the following lemma. 
\begin{Lem} \label{double}
Suppose that $\xi_{\nu,l}(\om)$ is a return (inessential or essential or a pseudo return) and that $a,b \in \om$, $\om \subset \EE_{\nu,l}(\ga) \cap \BB_{\nu,l}$ is a partition element, and $\ga \geq \ga_I$. Then if $\nu'$ is the next free return time. 
\[
|\xi_{\nu'}(a) -\xi_{\nu'}(b)| \geq 2  |\xi_{\nu}(a) -\xi_{\nu}(b)|.
\]
\end{Lem}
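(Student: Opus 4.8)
The plan is to combine the expansion estimate of Lemma \ref{adjacent-returns} with the weak parameter dependence property (Lemma \ref{weak-distortion}, in the form \eref{weak-distortion2}). First I would set $\nu' - \nu = m$ and, if a bound period $p$ immediately follows the return $\xi_{\nu,l}(\om)$, split the interval $[\nu,\nu']$ into the bound part $[\nu,\nu+p]$ and the subsequent free part $[\nu+p,\nu']$. During the bound period, Lemma \ref{boundexp} gives $|Df_a^p(\xi_{\nu,l}(a))| \geq e^{(\ga/2d_i)p}$, and the concluding remark after Lemma \ref{boundexp} already records $|\xi_{\nu+p}(a)-\xi_{\nu+p}(b)| \geq |\xi_{\nu}(a)-\xi_{\nu}(b)|$; during the free part, Lemma \ref{oel} gives a uniform factor $\la^{\nu'-\nu-p}$. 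In all cases Lemma \ref{adjacent-returns} shows $|Df_a^{\nu'-\nu}(\xi_{\nu}(a))| \geq e^{\ga_1'(\nu'-\nu)}$ for some $\ga_1' \geq \ga_C > 0$ once $N$ is large (one needs $\nu' - \nu$ at least some fixed $N_0$; if it is shorter the bound is immediate from the remark after Lemma \ref{boundexp}, since then $|\xi_{\nu'}(a)-\xi_{\nu'}(b)| \geq |\xi_\nu(a)-\xi_\nu(b)|$ already, and one must separately handle the tiny-gap case — see below).

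Next I would invoke the weak parameter dependence property. Since $\om \subset \EE_{\nu,l}(\ga) \cap \BB_{\nu,l}$ is a partition element with $\ga \geq \ga_I$, and since by Lemma \ref{adjacent-returns} the derivative $|Df_a^k(v_l(a))|$ keeps growing with exponent $\ga_1 \geq \ga_C$ all the way up to $\nu'$, the hypotheses of \eref{weak-distortion2} are met on $[\nu,\nu']$, so
\[
|\xi_{\nu'}(a)-\xi_{\nu'}(b)| \sim_{Q^{\nu'-\nu}} |Df_a^{\nu'-\nu}(\xi_{\nu}(a))|\,|\xi_{\nu}(a)-\xi_{\nu}(b)|,
\]
with $\log Q < \al \ll \ga_1$. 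Combining with the lower bound on the space derivative gives
\[
|\xi_{\nu'}(a)-\xi_{\nu'}(b)| \geq Q^{-(\nu'-\nu)} e^{\ga_1(\nu'-\nu)} |\xi_{\nu}(a)-\xi_{\nu}(b)| \geq e^{(\ga_1-\log Q)(\nu'-\nu)} |\xi_{\nu}(a)-\xi_{\nu}(b)|,
\]
and since $\ga_1 - \log Q > 0$, for $\nu' - \nu$ bounded below by a suitable constant (depending only on $f_0$, not on the dynamics) the factor exceeds $2$. For the remaining case where $\nu' - \nu$ is small, I would argue directly: a free return cannot occur within the bound period, and by the monotonicity remark after Lemma \ref{boundexp} together with the outside expansion Lemma \ref{oel} (which gives a definite factor $\la > 1$ per iterate outside $U$) even a short free stretch forces growth by at least $2$, provided $\de$, i.e. the size of $U$, and hence $\vep$, are chosen appropriately; alternatively one absorbs this into the constant by noting the free period after an essential/inessential return has a minimal length forced by \eref{essential-return} and the host-curve construction.

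The main obstacle I expect is the bookkeeping at the very first return immediately after time $\nu$: one must make sure that the bound period $p$ does not swallow the entire stretch to $\nu'$ in a way that destroys the clean application of \eref{weak-distortion2} (the weak distortion lemma was proved for free iterates, whereas inside a bound period one should instead quote Lemma \ref{boundexp} and Lemma \ref{bound-dist}). The clean way around this is to treat the bound block and the free block separately, using $|\xi_{\nu+p}(a)-\xi_{\nu+p}(b)| \geq |\xi_{\nu}(a)-\xi_{\nu}(b)|$ for the bound block and the full weak-dependence estimate only on the free block $[\nu+p,\nu']$; then the factor $2$ comes from the free block alone, using that by Lemma \ref{adjacent-returns} the relevant exponent there is still $\geq (9/10)\min(\ga,\ga_H) \geq \ga_C > \log Q$. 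A secondary point is ensuring that the constant $N$ (equivalently, the minimal length of a free period between consecutive essential returns) is large enough that $e^{(\ga_1-\log Q)(\nu'-\nu)} \geq 2$; this is where one uses that $\log Q$ can be made arbitrarily small by enlarging $N$, as recorded in Lemma \ref{weak-distortion}.
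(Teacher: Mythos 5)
Your proposal is correct and follows essentially the same route as the paper: apply the weak parameter dependence in the form \eref{weak-distortion2} over $[\nu,\nu']$ and conclude from $Q^{-(\nu'-\nu)}|Df_a^{\nu'-\nu}(\xi_\nu(a))|\gg 1$, where the derivative growth from $\xi_\nu(a)$ is supplied exactly as you say by Lemma~\ref{boundexp} across the bound part and Lemma~\ref{oel} across the free part. The one small correction is that your worry about a short gap $\nu'-\nu$ is resolved not by a minimal free-period length or by \eref{essential-return}/the host-curve construction, but by the minimal bound-period length: Lemma~\ref{boundexp} gives $p \geq d_i r/(2\Ga)$ with $r \geq \De'$, so $\nu'-\nu > p$ is already bounded below by a constant depending only on $U'$, and this together with $\log Q < \al \ll \ga_1$ forces the expansion factor past $2$.
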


Next, we prove the Main Distortion Lemma, which is our main object in this section.

\begin{Lem}[Main Distortion Lemma] \label{main-distortion}
Let $\vep' > 0$. Then if $N$ is sufficiently large we have the following. Let $\om \subset \EE_{\nu}(\ga) \cap \BB_{\nu,l}$ be a partition element for some $\ga \geq \ga_I$ and suppose that $\nu \geq N$ is a return time or does not belong to a bound period. Then we have, until the next free return $\xi_{\nu',l}(\om)$, a bound on the distortion if $\om$ is still a partition element at time $n$, namely, 
\[
\biggl| \frac{Df_a^{n}(v_l(a))}{Df_b^{n}(v_l(b))} - 1 \biggl| \leq \vep', \quad \text{for all $a,b \in \om$}
\]
if $\nu \leq n \leq \nu'$. 
\end{Lem}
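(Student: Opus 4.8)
The plan is to reduce, exactly as in Section~\ref{strong-distortion}, the multiplicative distortion estimate
\[
\Bigl| \frac{Df_a^{n}(v_l(a))}{Df_b^{n}(v_l(b))} - 1 \Bigr| \leq \vep'
\]
to the additive bound
\[
\hat{S} = \sum_{j=N}^{n} \frac{|\xi_j(a) - \xi_j(b)|}{\dist(\xi_j(b),Crit_b)} \leq \vep''
\]
via Lemma~\ref{xexp} and the Taylor expansion~\eref{T-exp}, picking up the parameter piece $|Df_a(w)-Df_b(w)| \leq C|\xi_j(a)-\xi_j(b)| e^{-\ga_2 j}$ which is summable and negligible. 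The whole content is therefore to bound $\hat{S}$. First I would decompose the time interval $[N,n]$ into alternating free periods and bound periods determined by the free returns $\nu = \nu_0 < \nu_1 < \nu_2 < \cdots$ of $\xi_{\cdot,l}(a)$ into $U'$ (with associated bound periods $p_k$), using that by hypothesis $\om$ remains a partition element at all times $\leq n$ and that $\om \subset \EE_{\nu,l}(\ga)\cap\BB_{\nu,l}$, which by Lemma~\ref{adjacent-returns} propagates to $\om \subset \EE_{\nu_k,l}(\ga_1)\cap\BB_{\nu_k,l}$ with $\ga_1 \geq \ga_C$ at every subsequent return, so Lemmas~\ref{weak-distortion}, \ref{bound-dist}, \ref{boundexp} and \ref{double} all apply along the way.

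The core estimate is the sum over a single free period $[\nu_k+p_k,\nu_{k+1}]$ together with the bound period $[\nu_k,\nu_k+p_k]$ that precedes the next return. On the free period, the weak parameter dependence (Lemma~\ref{weak-distortion}, in the form~\eref{weak-distortion2}) gives $|\xi_j(a)-\xi_j(b)| \sim_{Q^{j-\nu_k}} |Df_a^{j-\nu_k}(\xi_{\nu_k}(a))|\,|\xi_{\nu_k}(a)-\xi_{\nu_k}(b)|$, while during the free part the orbit is outside $U$ so $\dist(\xi_j(b),Crit_b)$ is bounded below by a constant; hence the terms of $\hat S$ on a free period are bounded by a geometric series with ratio $\sim \la^{-1} Q < 1$ (using $\log Q < \al \ll \ga_H$), summing to a constant multiple of $|\xi_{\nu_{k+1}}(a)-\xi_{\nu_{k+1}}(b)| / \dist(\cdot)$ — in effect the free-period contribution is dominated by the value at the end, i.e.\ at the next return. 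At the return $\nu_{k+1}$ itself the denominator $\dist(\xi_{\nu_{k+1}}(b),Crit_b) =: e^{-r_{k+1}}$ is small, but since $\om$ is a partition element~\eref{partition-element} and~\eref{good-geometry} holds, $|\xi_{\nu_{k+1}}(\om)| \leq e^{-r_{k+1}}/r_{k+1}^2$, so that single term is $\leq 1/r_{k+1}^2$. During the ensuing bound period $[\nu_{k+1},\nu_{k+1}+p_{k+1}]$ I would run the same computation as in Lemma~\ref{bound-dist}: split into $[1,J]\cup[J+1,p_{k+1}]$ with $J = d r_{k+1}/(10(2\al+\Ga))$, use the basic assumption $\BB$ on the first block (giving $\leq Ce^{-(9/10)dr_{k+1}}$) and the definition of the bound period on the second (giving $\leq Ce^{-\al J} = Ce^{-\al d r_{k+1}/(10(2\al+\Ga))}$); so the bound-period contribution is $\leq C e^{-c\, r_{k+1}}$ for some $c>0$.

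Putting these together, the total over the block starting at $\nu_k$ and ending at $\nu_{k+1}+p_{k+1}$ is bounded by $C(r_{k+1}^{-2} + e^{-c r_{k+1}} + \la^{-(\nu_{k+1}-\nu_k-p_k)})$, and to sum over $k$ I would use Lemma~\ref{double}, which says the distances $|\xi_{\nu_k}(a)-\xi_{\nu_k}(b)|$ at least double from one free return to the next; since they start at a value $\geq$ some tiny quantity and cannot exceed $S$ (partition-element bound) or $\diam\hat\C$, there are boundedly many returns with small $r_{k+1}$, and more importantly the doubling forces $\sum_k e^{-c r_{k+1}}$ and $\sum_k r_{k+1}^{-2}$ to be dominated by a convergent geometric-type series once one notes large $r_{k+1}$ corresponds to deep returns that, by doubling, happen only after long free stretches. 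The residual initial segment before the first return and any final incomplete free period are handled by the same free-period geometric estimate. Choosing $N$ large (hence $r_{k+1}\geq \De'$ large at returns into $U'$, and $\de'$ small) makes every constant in front small, so $\hat S \leq \vep''$, completing the proof. \textbf{The main obstacle} I anticipate is the bookkeeping that shows the sum $\sum_k (r_{k+1}^{-2} + e^{-c r_{k+1}})$ over all returns is small rather than merely finite: one must genuinely exploit the expansion/doubling of $|\xi_{\nu_k}(a)-\xi_{\nu_k}(b)|$ (Lemma~\ref{double}) to control how often and how deep the returns can be, and to relate the depth $r_{k+1}$ of a return to the length of the preceding free period via the outside expansion Lemma~\ref{oel}, so that deep returns are rare and separated; everything else is the standard Benedicks--Carleson substitution already rehearsed in Lemma~\ref{bound-dist}.
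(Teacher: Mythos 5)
Your overall reduction (Lemma~\ref{xexp} plus the Taylor expansion~\eref{T-exp}, absorbing the summable parameter piece $|Df_a(w)-Df_b(w)|\leq C|\xi_j(a)-\xi_j(b)|e^{-\ga_2 j}$) and your decomposition into alternating bound and free periods, with weak parameter dependence \eref{weak-distortion2} propagating the estimate forward, are the same as the paper's; and your block-split $[1,J]\cup[J+1,p]$ for the bound period, though more complicated than the paper's direct use of the bound-period inequality (which yields a cleaner $\leq 2C/r^2$ per return once the $j=0$ term is included), also works.

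The genuine gap is in the bookkeeping you flagged as the main obstacle. Your proposed resolution — that large $r_{k+1}$ corresponds to deep returns that, by doubling, happen only after long free stretches, so that deep returns are rare and separated — is false. Lemma~\ref{double} says nothing about the depth $r_{k+1}$ of the \emph{next} return relative to the current one: a very deep return can occur immediately after a shallow one, and many consecutive returns at the shallowest level $r=\De$ are not excluded by doubling alone, so $\sum_k r_{k+1}^{-2}$ is \emph{not} a priori controlled by a geometric series indexed by $k$. What makes the sum small is a different grouping: fix a depth level $r$ and let $(r)$ be the set of indices $k$ with $\dist(\xi_{\nu_k}(\om),Crit_\om)\sim_{\sqrt e}e^{-r}$. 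Within a fixed level $(r)$, the doubling of Lemma~\ref{double} makes $|\xi_{\nu_k}(a)-\xi_{\nu_k}(b)|$ for $k\in(r)$ an (at least) doubling sequence, and the partition-element hypothesis caps the last one by $\lesssim e^{-r}/r^2$, so
\[
\sum_{k\in(r)}\frac{|\xi_{\nu_k}(a)-\xi_{\nu_k}(b)|}{\dist(\xi_{\nu_k}(b),Crit_b)}\leq C\,\frac{|\xi_{\nu_{\hat k(r)}}(a)-\xi_{\nu_{\hat k(r)}}(b)|}{\dist(\xi_{\nu_{\hat k(r)}}(b),Crit_b)}\leq \frac{C}{r^2},
\]
and then $\sum_{r\geq\De}C/r^2\leq 2C/\De$ is small for $\De$ large. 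This grouping by level, not a geometric series over $k$, is the missing idea.

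A secondary omission: you fold everything between returns into a clean ``free-period geometric estimate,'' but after the last bound period there may be \emph{pseudo-returns} into $U'\setminus U$ where the denominator $\dist(\xi_{q_l}(b),Crit_b)$ can drop to $\sim\de$ and is not uniformly bounded below. The paper handles these separately, using only the crude bound $\diam(\xi_{q_l}(\om))\leq S=\vep_1\de$ and the same grouping-by-level trick over $\De'\leq r\leq\De$, which gives an additional $C\vep_1$ that is small only because $\vep_1$ (the large-scale fraction) is chosen small. Your argument should account for this explicitly; the generic free-period geometric series does not cover it because inside $U'\setminus U$ one does not have the uniform lower bound on $\dist(\xi_j(b),Crit_b)$ that you invoke on free periods.
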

\begin{proof}
By Lemma \ref{adjacent-returns} the CE-condition is fulfilled with exponent $\ga_1 \geq (9/10)  \min(\ga, \ga_H)$ up until the next free return. Now, $\ga_1 \geq \ga_C$ so we can repeatedly use the weak parameter dependence property. Let us assume that $\nu$ is a return time. If not, replace $\nu$ with the latest return time before $\nu$. 

Put $\xi_{n,l}(a) = \xi_n(a)$. We want to estimate the sum
\begin{equation} \label{sum}
\sum_{j=1}^{n} \frac{|\xi_j(a)) - \xi_j(b)|}{\dist(\xi_j(b),Crit_b)}.
\end{equation}

First we look at the contribution from the bound periods. We want to estimate the sum
\[
\sum_{j=0}^{p} \frac{|\xi_{\nu + j}(a) - \xi_{\nu +j}(b)|}{\dist(\xi_{\nu +j}(b), Crit_b)}.
\]

Since $|\xi_{\nu}(a) - \xi_{\nu}(b)| \sim_2 e^{-r}/r^2$ and $\dist(\xi_{\nu}(b), Crit_b) \sim_{\sqrt{e}} e^{-r}$, the first term ($j=0$) contributes $\sim 1/r^2$. 

To estimate the other terms ($j > 0$), we use the weak parameter dependence property to get
\[
  |\xi_{\nu+j}(a) - \xi_{\nu+j}(b)| \sim_{Q^j} |Df^j(\xi_{\nu}(a))| |\xi_{\nu}(a) - \xi_{\nu}(b)| \sim_2 |Df^j(\xi_{\nu}(a))| e^{-r}/r^2.
\]
By the definition of the bound period we have, for $j > 0$, using Lemma \ref{bound-dist}, if $\xi_{\nu}(a) \in U_i'$, 
\[
|Df^j(\xi_{\nu}(a))| e^{-r} \sim |\xi_{j,i}(a) - \xi_{\nu+j}(a)| \leq e^{-\al j} \dist(\xi_{j,i}(a),Crit_a).
  \]
So we get
  \[
|\xi_{\nu+j}(a) - \xi_{\nu+j}(b)| \leq CQ^j  \frac{e^{-\al j} \dist(\xi_{j,i}(a),Crit_a)}{r^2},
\]
and therefore, since $\dist(\xi_{j,i}(a),Crit_a)$ is virtually the same for all $a\in \om$,
\[
\sum_{j=0}^{p} \frac{|\xi_{\nu + j}(a) - \xi_{\nu +j}(b)|}{\dist(\xi_{\nu+j}(b),Crit_b)} \leq \frac{C}{r^2} + C \sum_{j=1}^p  \frac{Q^j e^{-\al j}}{r^2} \leq \frac{2C}{r^2}, 
\]
where the term $C/r^2$ corresponds to $j=0$, and $\log Q < \al$. 

Between each adjacent pair of free returns there is a growth of the interval $\xi_{n,j}(\om)$ as follows. Lemma \ref{double} implies 
\begin{equation} \label{double-return}
2 \diam(\xi_{\nu_k}(\om)) \leq \diam(\xi_{\nu_{k+1}}(\om)),  \quad \text{for all $a,b \in \om$.}
\end{equation}
Let $(r)$ be those indices $k$ for which $\dist(\xi_{\nu_k}(\om), Crit_{\om}) \sim_{\sqrt{e}} e^{-r}$, and let $\hat{k}(r)$ be the largest integer in $(r)$. Hence going backwards in time, inside each $(r)$, the contribution from the bound periods is a constant times the last contribution, i.e.
\[
\sum_{k \in (r)} \frac{|\xi_{\nu_k}(a) - \xi_{\nu_k}(b)|}{\dist(\xi_{\nu_k}(b),Crit_b)} \leq  C \frac{|\xi_{\nu_{\hat{k}(r)}}(a) - \xi_{\nu_{\hat{k}(r)}(b)}|}  {\dist(\xi_{\nu_{\hat{k}(r)}}(b),Crit_b)} \leq \frac{C}{r^2}. 
  \]
Summing over all such possible returns we get 
\[
\sum_{r =\De}^{\infty} \frac{C}{r^2} \leq \frac{2C}{\De}.
  \]

Let us now look for the contribution from the free periods. Let us assume that $\nu_k$ are the returns up until $\nu_s = \nu'$, and $p_k$ their bound periods. By Lemma \ref{oel} we get that, for every $a,b \in \om$, now assuming that $\xi_j(\om) \cap U = \emptyset$ for all $\nu_k+p_k +1 \leq j \leq \nu_{k+1}-1$,
\[
  |\xi_{\nu_{k+1}-1}(a) - \xi_{\nu_{k+1}-1}(b)| \geq C'\la^{\nu_{k+1}-1-j} | \xi_{j}(a) - \xi_{j}(b)|.
  \]
Hence,
  \begin{align}
    \sum_{j=\nu_{k-1}+p_{k-1}+1}^{\nu_{k}-1} \frac{|\xi_j(a) - \xi_j(b)|}{\dist(\xi_j(b),Crit_b)} &\leq C' \sum_{j} \la^{-(\nu_{k}-1-j)} \frac{|\xi_{\nu_{k}-1}(a) - \xi_{\nu_{k}-1}(b)|}{\de} \nonumber \\
    &\leq C \frac{|\xi_{\nu_{k}-1}(a) - \xi_{\nu_{k}-1}(b)|}{\de}.
  \end{align}

  We have, for some $\ka_2 \geq 1$,
  \[
|\xi_{\nu_{k}-1}(a) - \xi_{\nu_{k}-1}(b)| \sim_{\ka_2} |\xi_{\nu_{k}}(a) - \xi_{\nu_{k}}(b)| \sim_2 e^{-r_{k}}/r_{k}^2,
\]
if $k < s$, where we have put $\dist(\xi_{\nu_k}(\om),Crit_{\om}) \sim_{\sqrt{e}} e^{-r_k}$. So for those returns the contribution to the sum (\ref{sum}) is going to be very small. 
  Recalling that $|\xi_j(a)) - \xi_j(b)| \leq S$, where $S =\vep_1 \de$ is the large scale, $\de=e^{-\De}$, we get, for the last return, 
  %and that $\xi_{\nu'-1}(a)$ is a preimage to a critical point, where we assume that $\xi_{\nu'}(a)$ is at a distance $\sim e^{-r}$ to %the critical points, we get
  \begin{equation}
   \sum_{j=\nu_{s-1}+p_{s-1}+1}^{\nu_{s}-1} \frac{|\xi_j(a) - \xi_j(b)|}{\dist(\xi_j(b),Crit_b)}  \leq C \frac{S}{\de} \leq C\vep_1 ,
\end{equation}
where $C$ depends only on $C'$ and $\la$ (hence not on $\de$). So $C \vep_1$ can be made arbitrarily small if $\vep_1$ is small enough. 
We let $(r)$ be those indices $k$ such that $\dist(\xi_{\nu_k}(\om), Crit_{\om}) \sim_{\sqrt{e}} e^{-r}$, and $\hat{k}(r)$ the maximum index $k$ for which this happens. 
Then using Lemma \ref{double}, we have (\ref{double-return}), and therefore we conclude that
\[
\sum_{k \in (r)} |\xi_{\nu_{k}}(a) - \xi_{\nu_{k}}(b)| \leq C|\xi_{\nu_{\hat{k}(r)}}(a) - \xi_{\nu_{\hat{k}(r)}}(b)|.
  \]
Summing up, we get, excluding the last return,
\begin{align}
  \sum_{k=1}^{s-1} \sum_{j=\nu_{k-1}+p_{k-1} +1}^{\nu_{k}-1}\frac{|\xi_j(a) - \xi_j(b)|}{\dist(\xi_j(b),Crit_b)} &=
  \sum_{r \geq \De}  \sum_{k \in (r)} \sum_{j=\nu_{k-1}+p_{k-1}+1}^{\nu_{k}-1} \frac{|\xi_j(a) - \xi_j(b)|}{\dist(\xi_j(b),Crit_b)} \nonumber \\
  &\leq C \sum_{r \geq \De}  \sum_{k \in (r)} \frac{|\xi_{\nu_{k}-1}(a) - \xi_{\nu_{k}-1}(b)|}{\de} \nonumber \\
  &\leq C \sum_{r \geq \De}   \frac{|\xi_{\nu_{\hat{k}(r)}-1}(a) - \xi_{\nu_{\hat{k}(r)}-1}(b)|}{\de} \nonumber \\
  &\leq C \sum_{r \geq \De} \frac{e^{\De-r}}{r^2} \leq \frac{C}{\De}.
  \end{align}
  Including the last we return we get
  \begin{equation}
    \sum_{k=1}^{s} \sum_{j=\nu_{k-1}+p_{k-1} + 1}^{\nu_{k}-1}\frac{|\xi_j(a) - \xi_j(b)|}{\dist(\xi_j(b),Crit_b)} 
    \leq \frac{C}{\De} +  C\vep_1.  
    \end{equation}

    If we now pick some $n$ such that  $\nu+p \leq n < \nu'$, then letting $q_1 <  \ldots < q_t $ be consecutive, so called {\em pseudo-returns} into some fixed $U' \sm U$ so that $\nu+p \leq q_1$, $q_t \leq n$, we proceed as follows. The only difference to returns into $U$ is that we can only say that $\diam(\xi_{q_j}(\om)) \leq S$ for pseudo-returns. We do not count bound returns as pseudo-returns but consider only the free pseudo-returns. 

    The contribution to the sum (\ref{sum}) between each pair of pseudo returns is  again a constant times the last term for each pseudo-return. Let $(r)$ be the indices $l$ for which $\xi_{q_l}(\om)$ is a pseudo return for which $\dist(\xi_{q_l}(\om),Crit_{\om}) \sim_{\sqrt{e}} e^{-r}$, and let $\hat{l}(r)$ be the largest index $l$ for which $\dist(\xi_{q_l}(\om), Crit_{\om}) \sim_{\sqrt{e}} e^{-r}$.

    Then
    \begin{align}
      \sum_{j=\nu+p+1}^{q_t} \frac{|\xi_j(a) - \xi_j(b)|}{\dist(\xi_j(b),Crit_b)} &=  \sum_{j=\nu+p+1}^{q_1} \frac{|\xi_j(a) - \xi_j(b)|}{\dist(\xi_j(b),Crit_b)} \nonumber  \\
      &+ 
      \sum_{r=\De'}^{\De} \sum_{l \in (r), l > 1} \sum_{j=q_{l-1}+1}^{q_l} 
        \frac{|\xi_j(a) - \xi_j(b)|}{\dist(\xi_j(b),Crit_b)} \nonumber \\
      &\leq   C \sum_{r=\De'}^{\De} \sum_{l \in (r)} \frac{|\xi_{q_l}(a)) - \xi_{q_l}(b)|}{\dist(\xi_{q_l}(b),Crit_b)} \nonumber \\
      &\leq C \sum_{r=\De'}^{\De}  \frac{|\xi_{q_{\hat{l}(r)}}(a) - \xi_{q_{\hat{l}(r)}}(b)|}{\dist(\xi_{q_{\hat{l}(r)}}(b),Crit_b)}.
%      &\leq  C \sum_{r=\De'}^{\De} \vep_1 e^{r-\De}  \leq C \vep_1. 
      \end{align}

Moreover, we have the assumption that $\diam(\xi_k(\om)) \leq S = \vep_1 \de$, for all $k \leq n$.  If $\xi_{q_l}(\om)$ is a pseudo return with $\dist(\xi_{q_l}(\om),Crit_{\om}) \sim_{\sqrt{e}} e^{-r_l}$, for  $\De' \leq r_l \leq \De$, the contribution will be simply bounded by $\vep_1 e^{-\De}/e^{-r_l}$.  We get 
\begin{equation}
C \sum_{r=\De'}^{\De}  \frac{|\xi_{q_{\hat{l}(r)}}(a) - \xi_{q_{\hat{l}(r)}}(b)|}{\dist(\xi_{q_{\hat{l}(r)}}(b),Crit_b)}
  \leq C \sum_{r=\De'}^{\De} \vep_1 e^{r-\De}  \leq C \vep_1. 
\end{equation}
The contribution from the very last iterates from $q_t < j \leq n$ is a constant (depending on the large scale) by the uniform expansion along the early orbit (the bound period) and then outside $U'$. Summing up,
\[
 \sum_{j=1}^{n} \frac{|\xi_j(a) - \xi_j(b)|}{\dist(\xi_j(b),Crit_b)} \leq \frac{2C}{\De} + \frac{C}{\De} + 2C\vep_1,
  \]
which can be made small if $\vep_1$ and $\de$ are small enough. This finishes the lemma. 
\end{proof}

We now get {\em a posteriori} that $\xi_n$ is almost affine on each partition element $\om$. Hence also $|\xi_n(a) - \xi_n(b)|$ expands according to the space derivative for any parameter $c \in [a,b]$ i.e.
\[
|\xi_n(a) - \xi_n(b)| \sim_C |Df_c^{n-j}(\xi_j(c))| |\xi_j(a) - \xi_j(b)|,
\]
for $C > 1$ close to $1$. This is called {\em strong distortion}. 

Moreover, we see that as long as $\ga \geq \ga_I$ for returns (in general $\ga \geq \ga_I - 4 \al K$), we have good geometry control, i.e. for a partition element $\om$, we have (\ref{good-geometry}), for all $a,b \in \om$.
%\begin{equation} \label{good-geometry}
%\biggl| \frac{\xi_{n,l}'(a)}{\xi_{n,l}'(b)} - 1 \biggl| \leq \ti{\vep}, \quad \text{ for all $a,b \in \om$},
%\end{equation}
%for some, from the beginning chosen, $\ti{\vep} > 0$. This means that the curve $\xi_{n,l}(\om)$ is more or less straight. 

\subsection{Initial distortion}

As a direct consequence of the Main distortion lemma, we here state that for any sufficiently small $\vep$ we can find an interval $\om \subset (-\vep,\vep)$ such that $\xi_{n,l}(\om)$ grows to some ``large scale'' (denoted by $S$) or returns into $U$ as an essential first return. 

\begin{Lem}[Start-lemma] \label{startlemma}
Let $f=f_0$ be as in Theorem \ref{sats1} and let $\vep' > 0$ and $N >0 $ from Lemma \ref{transv1}. There is a neighbourhood $U$ of $Crit_0$ and a number $S > 0$ (called the ``large scale''), which depends on $U$ such that the following holds. For every sufficiently small $\vep > 0$ and each critical point $c_l$ there is some $N_l \geq N > 0$ such that for every $a \in \om=(-\vep,\vep)$ we have:
\begin{itemize}
\item[i)]
For some $\ga_l \geq \ga_0 (1-\vep')$, it holds that
\[
|Df_a^k(f_a(c_l(a)))| \geq Ce^{\ga_l k},  \text{ for all $k \leq N_l$},
\]
\item[ii)]
for all $k \leq N_l - 1$, it holds that 
\[
\diam(\xi_{k,l}(\om)) \leq \left\{
\begin{array}{cc}
\frac{\dist(\xi_{k,l}(\om),Jrit_{\om})}{(\log (\dist(\xi_{k,l}(\om),Jrit_{\om})))^2} , & \text{if } \xi_{k,l}(\om) \cap U \neq \emptyset, \nonumber  \\
S, & \text{if } \xi_{k,l}(\om) \cap U = \emptyset, \nonumber
\end{array} \right.
\]
\item[iii)]
for $k = N_l$, it holds that 
\[
\diam(\xi_{N_l,l}(\om)) \geq \left\{
\begin{array}{cc}
\frac{\dist(\xi_{N_l,l}(\om),Jrit_{\om})}{(\log (\dist(\xi_{N_l,l}(\om),Jrit_{\om})))^2} , & \text{if } \xi_{N_l,l}(\om) \cap U \neq \emptyset,  \nonumber \\
S, & \text{if } \xi_{N_l,l}(\om) \cap U = \emptyset, \nonumber
\end{array} \right.
\]
\item[iv)]
and finally, for all $a,b \in \om$ it holds that
\[
\biggl| \frac{Df_a^{n-N}(\xi_{N}(a))}{Df_b^{n-N}(\xi_{N}(b))}  - 1 \biggr|  \leq \vep', \text{ for all $n \leq N_l$}. 
\]
\end{itemize}
\end{Lem}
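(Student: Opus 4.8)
The plan is to take the neighbourhood $U$ of $Crit_0$ and the large scale $S=\vep_1\de$ exactly as fixed in Lemma \ref{oel} and Section \ref{bound-distortion}, and then simply to \emph{define} $N_l$ as the least $k\geq 1$ for which the partition-element inequality of Definition \ref{partition-element} fails for $\xi_{k,l}(\om)$. With this definition items (ii) and (iii) hold tautologically, so what really needs proof is that $N_l$ is finite, that $N_l\geq N$, and that the growth bound (i) and the distortion bound (iv) survive up to $N_l$.

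First I would dispose of the iterates up to time $N$ by continuity alone. Since $f_0$ is Collet--Eckmann and the forward orbit of $v_l$ under $f_0$ contains no critical point, for each fixed $k\leq N$ the quantity $\dist(\xi_{k,l}(0),Crit_0)$ is bounded below while $\diam\xi_{k,l}(\om)\to 0$ as $\vep\to 0$; hence for $\vep$ small the partition inequality holds for all $k\leq N$ — so $N_l\geq N$ — and simultaneously $|Df_a^{k}(v_l(a))|\geq C_0(1-\vep')e^{\ga_0 k}$ and $|Df_a^{n-N}(\xi_N(a))/Df_b^{n-N}(\xi_N(b))-1|\leq\vep'$ for $n\leq N$ (only finitely many iterates are involved). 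In particular $\om=(-\vep,\vep)\subset\EE_{N,l}(\ga)\cap\BB_{N,l}$ for some $\ga$ close to $\ga_0$, for every $l$ at once.

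The heart of the argument is to propagate this along $[N,N_l]$. Because $\om$ is, by the definition of $N_l$, a partition element at every time $k<N_l$, I would show by induction over the successive free returns $\nu_1<\nu_2<\dots<N_l$ that $\om\subset\EE_{\nu_k,l}(\ga_k)\cap\BB_{\nu_k,l}$ with $\ga_k\geq\ga_I$: the basic assumption for $f_a$, $a\in\om$, is inherited from that of $f_0$ because the partition inequality forces $\diam\xi_{k,l}(\om)$ to be much smaller than $\dist(\xi_{k,l}(\om),Crit_\om)$ at returns and at most $S$ outside $U$ (the binding data for the critical points $j\neq l$ needed in (ba2)/(exp2) coming from running the argument for all $l$ simultaneously and from $2K\al/\ga_I$ being tiny), and the Collet--Eckmann exponent stays above $\ga_C$ by Lemma \ref{adjacent-returns} across each bound-and-free stretch. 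Granting this, Lemma \ref{main-distortion} applies at $\nu=N$ and then at each $\nu_k$; since its conclusion bounds the full ratio $|Df_a^{n}(v_l(a))/Df_b^{n}(v_l(b))-1|$, reinvoking it at every intervening free return gives that bound for \emph{all} $N\leq n\leq N_l$, and then (iv) follows by the usual cocycle manipulation together with the continuity estimate on the initial segment. Item (i) is immediate from (iv) with $b=0$: it yields $|Df_a^{n}(v_l(a))|\geq(1-\vep')|Df_0^{n}(v_l(0))|\geq (1-\vep')C_0e^{\ga_0 n}$ for all $a\in\om$, $n\leq N_l$, so (i) holds with $\ga_l=\ga_0$. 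The reason the exponent stays near $\ga_0$ rather than drifting down towards $\approx\min(\ga_0,\ga_H)$ is that before $N_l$ every return is so shallow (its diameter being below the partition threshold) that the total length of the bound periods is only $O(\al)$ times the elapsed time, and $\al$ was chosen $\ll\vep'\ga_0$.

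Finally, finiteness of $N_l$: if $N_l=\infty$, then $\om$ remains a partition element for all time, so by (i) and the good-geometry estimate (Lemma \ref{transv1} combined with (iv), giving $\xi_{k,l}'(a)\sim L_l\,Df_a^{k-1}(v_l(a))$ uniformly on $\om$) one has $\diam\xi_{k,l}(\om)\sim|\xi_{k,l}'(c)||\om|\gtrsim e^{\ga_0(1-\vep')k}\vep$; alternatively Lemma \ref{double} forces the diameter to at least double between consecutive free returns and Lemma \ref{oel} makes it grow exponentially along any infinite free tail — in every case the diameter would exceed the spherical diameter of $\hat{\C}$, which is absurd, so $N_l<\infty$. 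I expect the main obstacle to be the bookkeeping in the third paragraph: verifying that the $\EE\cap\BB$ status genuinely propagates through an \emph{a priori} unbounded number of returns before $N_l$, so that Lemma \ref{main-distortion} is legitimately applicable at each of them, and that the accumulated bound-period loss is really $o(1)\cdot k$ — which is exactly where the smallness of $\al$ relative to $\ga_0$ and the slow recurrence of $f_0$ are used.
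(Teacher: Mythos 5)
Your overall plan — define $N_l$ as the first time the partition-element inequality fails, dispose of $k\leq N$ by continuity, verify (i) and (iv), and deduce finiteness from growth — matches the paper, and your observation that (iv) with $b=0$ implies (i) is a nice packaging. But the \emph{order} is reversed from the paper's, and that reversal opens a gap in your bootstrap for the third paragraph.

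The paper first derives (i) directly from condition (ii), by the elementary observation (exactly as in the proof of Lemma~\ref{weak-distortion}) that (ii) forces $\xi_k(a)$ to stay so close to $\xi_k(0)$ that the \emph{one-step} derivatives satisfy $|Df_a(\xi_k(a))|/|Df_0(\xi_k(0))|\in[C^{-1},C]$ with $\log C\ll\vep'\gamma_0$ once $S$ and $U$ are small. Multiplying over $k$ gives $|Df_a^n(v_l(a))|\geq C^{-n}|Df_0^n(v_l(0))|\geq C_0 e^{(1-\vep')\gamma_0 n}$ for all $n\leq N_l$ — this is (i), and it needs neither Lemma~\ref{adjacent-returns} nor the Main Distortion Lemma. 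Once (i) is in hand, $\om\subset\EE_{\nu,l}(\gamma)\cap\BB_{\nu,l}$ with $\gamma$ near $\gamma_0$ is automatic at every return $\nu\leq N_l$, and Lemma~\ref{main-distortion} then delivers (iv) with no bootstrap at all.

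You instead try to establish (iv) first, which requires $\om\subset\EE_{\nu_k,l}(\gamma)\cap\BB_{\nu_k,l}$ with $\gamma\geq\gamma_I$ at each return, and you justify the exponent by Lemma~\ref{adjacent-returns} alone, claiming it ``stays above $\gamma_C$.'' That is not enough: first, $\gamma_C<\gamma_I$, so that bound would not satisfy the hypothesis of Lemma~\ref{main-distortion}; second, Lemma~\ref{adjacent-returns} only guarantees $\gamma_1\geq(9/10)\min(\gamma,\gamma_H)$, and iterating this across an a priori unbounded number of free returns lets the exponent decay geometrically below $\gamma_I$. Your patch — that returns before $N_l$ are ``shallow'' so total bound time is $O(\alpha)\cdot k$ — conflates two different things: (ii) controls the \emph{diameter of the parameter curve} relative to $\dist(\xi_k(\om),Crit)$, not the depth $r$ of a return, so returns before $N_l$ can be arbitrarily deep. (The bound on the total bound time actually comes from $\BB_{\nu,l}$ and the slow recurrence of $f_0$, not from (ii).) The correct way to save your ordering would be to run (iv)$\Rightarrow$(i) \emph{inside} the induction, so that the exponent refreshes to $\gamma_0(1-\vep')$ at every return rather than being fed through Lemma~\ref{adjacent-returns}; but that is equivalent to — and no shorter than — simply proving (i) directly from (ii) as the paper does.

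The finiteness discussion and the reduction of the first $N$ iterates to continuity are both fine and are left implicit in the paper.
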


\begin{Rem}
The Whitney type of condition on the diameter of $\xi_n(\om)$ and its distance to the critical points has the following meaning. With $\dist(\xi_{n,l}(\om),Jrit_{\om}) \sim e^{-r}$  the diameter becomes $\sim e^{-r}/r^2$ and this is sufficient for having control of the distortion of the derivative. The condition is also used in the main distortion lemma later. 
\end{Rem}

\begin{proof}
  Similar to the proof of Lemma \ref{weak-distortion},
  %for each $C > 1$ there is $S > 0$, and $\vep > 0$, such that, if $z,w \notin U$, $|z-w| \leq S$, and $a, b \in (-\vep,\vep)$, then
%\[
% \frac{|f_a'(z)|}{|f_b'(w)|} \leq C. 
%\frac{1}{C}|f_a(z)-f_a(w)| \leq |f_a'(z)||z-w| \leq C|f_a(z)-f_a(w)|.
%\]
%If we put $z=\xi_n(a)$ and $w=\xi_n(b)$ and at least one of them belongs to $U$, the same holds with possibly another $C > 1$ if $|z-w| \leq k \cdot \text{dist}(z,Crit_a)$, where $k=1 / (\log (\text{dist}(z,Crit_a))^2$.
we can easily conclude that condition $ii)$  implies that we have very small distortion for $Df_c$ $c \in (-\vep,\vep)$ both in the space and parameter variable. 
We see that if $\xi_n(a)$ and $\xi_n(b)$ are close in this sense, then for all $a,b \in (-\vep,\vep)$, in particular for $b=0$, the bounded distortion on $Df_c$ implies
\[
|Df_a^n(v_l(a))| \geq C^{-n} |Df_0^n(v_l(0))| \geq C_0 e^{\ga_1 n},
\]
for some $\ga_1$ slightly smaller than $\ga_0$ (we may assume that $\ga_1 \geq (1-\vep') \ga_0$), if $C$ is close enough to $1$ (by choosing $S$ small enough). So $f_a$ also satisfies the CE-condition with exponent slightly smaller than $\ga_0$. We now let $N_l$ be the maximal integer such that $ii)$ holds. We have shown that also $i)$ holds for this $N_l$. We can hence use the Main distortion lemma for all following returns until time $N_l$. Hence $iv)$ holds. 
\end{proof}

%We can now, a posteriori, conclude that we have a stronger statement than the Claim in Subsection \ref{paramindep}. What (\ref{good-geometry}) means is %that the map $\xi_n: \om \raw \hat{\C}$ is almost affine. It also follows that $Df_a^n$ is also almost affine on a neighbourhood of $v_l(\om)$. Indeed, %for any $c \in [a,b]$, $|\xi_n(a) - \xi_n(b)| \sim_C |\xi_n'(c)||a-b|$, where $C$ is close to $1$. By Lemma \ref{transv1} we conclude, for $N \leq j  %\leq n$, 
%\begin{equation} \label{ep-str-clean}
%|\xi_n(a) -\xi_n(b) | \sim_C |Df_a^{n-j}(\xi_n(a))| |\xi_j(a) - \xi_j(b)|, 
%\end{equation}
%where $C$ can be chosen to be arbitrarily close to $1$. 

\subsection{The partition}
If $J(f) = \hat{\C}$ then we have $2d-2$ critical points, counting multiplicity. Inside the subspace $\La_{d,\oli{p}'}$ each critical point moves  analytically. So Lemma \ref{startlemma} gives at most $2d-2$ numbers $N_l$, given an interval $\om_0=(-\vep,\vep)$, such that $\xi_{N_l,l}(\om_0)$ has grown to some large scale $S$ (same for all $l$), or has reached size $e^{-r}/r^2$ inside $U$, where $e^{-r}$ is, more or less, the distance to the critical points, i.e. $\dist(\xi_{N_l,l}(\om_0),Jrit_{\om_0}) \sim e^{-r}$. We now assume that, without loss of generality, $N_1 = \min (N_l)$. Thus we have the CE-condition satisfied for all critical points up until time $N_1$, on $\om_0$. 

If $N_1$ is not a return time, we have $\diam(\xi_{N_1,1}(\om_0)) \geq S$ by Lemma \ref{startlemma}. As soon as this happens, we partition the interval $\om_0$ into the least number of smaller sub-intervals $\om_0^i \subset \om$ of equal length such that $\diam(\xi_{N_1,1}(\om_0^i)) \leq S$. We call the sets $\om_0^i$ of this type {\em partition elements}. We do this partitioning for every critical point at all times outside $U$ until some parameter returns into $U$. In this way we always have $\diam(\xi_{n,l}(\om)) \leq S$ for {\em any} partition element $\om$ and study the evolution of each such $\om$ separately. We will use $\om \subset \om_0 = (-\vep,\vep)$ as a standard notion for partition elements in the future.

Let us go back to the critical point $c_1$ ($l=1$)
%For each $a \in \om_0$, and each time $n$, we have a partition element %$\om_n(a)$ around $a$, meaning that $\diam(\xi_{n,1}(\om_n(a))) \leq S$ if $\xi_{n,l}(\om_n(a)) \cap U = \emptyset$. The %first return for $\om_0$ is the least integer $m_{1} \geq N_1$ such that $\xi_{m_{1},1}(\om_{m_{1}}(a)) \cap U \neq %\emptyset$ for some $a \in \om_0$. Hence at time $m_1$ the interval $\om_0$ may have been partition into several smaller %subintervals outside $U$. 
% Since we are looking at the first critical point, let us drop the second index and write $m_{1,1}=m_1$.
and assume that $\om \subset \om_0$ is such partition element and that $m_1$ is the smallest integer $m_1 \geq N_1$ such that $\xi_{m_{1},1}(\om) \cap U \neq \emptyset$, i.e. $\xi_{m_1,1}(\om)$ is a {\em return} into $U$. If
\[
\frac{1}{2} \frac{\dist(\xi_{m_1,1}(\om),Jrit_{\om})}{(\log (\dist(\xi_{m_1,l}(\om),Jrit_{\om})))^2} \leq \diam(\xi_{m_1,1}(\om)) ,
\]
we speak of an {\em essential} return. Otherwise the return is {\em inessential}. For essential returns we then partition the interval $\om$ into smaller intervals $\om_{m_1}^i \subset \om$ such that
\begin{multline} \label{dyadic2}
  \frac{1}{2} \frac{\dist(\xi_{m_1,1}(\om_{m_1}^i),Jrit_{\om_{m_1}^i})}{(\log (\dist(\xi_{m_1,l}(\om_{m_1}^i),Jrit_{\om_{m_1}^i})))^2} \\
  \leq \diam(\xi_{m_1,1}(\om_{m_1}^i))  \\ \leq \frac{\dist(\xi_{m_1,1}(\om_{m_1}^i),Jrit_{\om_{m_1}^i})}{(\log (\dist(\xi_{m_1,l}(\om_{m_1}^i),Jrit_{\om_{m_1}^i})))^2}.
\end{multline}
These smaller intervals $\om_{m_1}^i$ are also called {\em partition elements} (at time $m_1$). 
The condition (\ref{dyadic2}) implies that we have control of the distortion:
\[
\frac{|Df_a(\xi_{m_1,1}(a))|}{|Df_b(\xi_{m_1,1}(b))|} \leq C(\ti{r}), \quad \text{ for all $a,b \in \om_{m_1}^i$}.
\]
where $C(\ti{r}) > 1$ and tends to $1$ as $\ti{r} = -\log (\dist(\xi_{m_1,1}(\om_{m_1}^i),Jrit_{\om_{m_1}^i}))$ tends to infinity. We let $r = \lceil \ti{r} - 1/2 \rceil$. For $\om_{m_1}^i$ above, we associate $r = r(\ti{r})$ to $\ti{r}$, and it follows that
\[
\dist(\xi_{m_1,1}(\om_{m_1}^i),Jrit_{\om_{m_1}^i}) \sim_{\sqrt{e}} e^{-r}, 
\]
(cf. with the annular neighbourhoods in \cite{MA-Z}). Moreover, we see that
\[
  \diam(\xi_{m_1,1}(\om_{m_1}^i)) \sim_2 e^{-r}/r^2,
\]
if $r \geq \De$, and $\De$ sufficiently large. When we write $\dist(A,B) \sim_{\sqrt{e}} e^{-r}$, typically we use it when $A=\xi_{n,l}(\om)$ and $B=Crit_{\om}$, then we mean the unique $r$ such that $r = \lceil -\log \dist(A,B) -1/2 \rceil$, i.e. $\dist(A,B) \in [e^{-r-1/2},e^{-r+1/2})$.

%%% SKA DETTA VARA MED VERKLIGEN? 
%Of course the partition elements can be done in many ways, but by the axiom of choice, there is a well defined function that %determines how to do this.

For each return, and in particular this first return, we partition parameter intervals according to the above rule. Moreover, we delete parameters not satisfying the basic assumption and show later that the Lebesgue measure of the set deleted is a small portion of the total interval returning into $U$. It is quite easy to see that this is the case for the first return. Because of the slow recurrence condition, we see that
\[
e^{-r} \geq e^{-\al m_1} \gg e^{-2\al m_1}. 
\]
Hence, the basic assumption possibly forces us to delete a small fraction of parameters at time $m_1$.

After this return the first bound period starts, and the whole idea is that binding the old orbit to the early orbit of possibly another critical point, will, via distortion control, transfer the derivative gain form the early orbit to the old orbit. To do this we need to be able to use the binding time for all critical points in the induction. We continue like this as long as we can use the binding information for all critical points, up until time $N_1$. This procedure creates a Cantor-like set (denoted by $\Om_l(m)$) of ``good'' parameters, for each critical point $c_l$, that do satisfy the basic assumption up until some time $m$, which turns out to be much larger than $N_1$, because the bound periods for a return $\xi_{m,l}(\om)$ into $U$ are much smaller than $m$ itself (Lemma \ref{boundexp}).

At this point, we have to delete more parameters such that the binding period can be used longer. A potential problem here is that different critical points $c_l$ may produce different Cantor-like sets up until time $m$, and if we take intersections of these sets, we may destroy the partition elements. But the idea is that the partition elements at time from, say $N_1$ until $2N_1$, are much larger that those partition elements formed around time $m \gg N_1$. We develop this idea, which is due to M. Benedicks, later.

In the construction, the growth of the derivative along critical orbits is never allowed to go below a certain level, in order to have the whole machine working. Recall that $\ga_B = (3/4) \min (\ga_0,\ga_H) (1-\tau)$, where $0 < \tau  < 1$. This exponent $\ga_B$ should be thought of the desired Lyapunov exponent, which we will get at the end. It will also be used as an induction assumption. The number $\tau$ can be chosen freely but $\de$ depends on it (see Section \ref{large-deviations}, Lemma \ref{esctimes-estimate}). The intermediate Lyapunov exponent $\ga_I = (1/3) \min (\ga_0,\ga_H) (1-\tau) < \ga_B/2$ will be an assumption in most lemmas.   
  %The critical Lyapunov exponent is then defined as $\ga_C = (1/2)\ga_I = (1/6) \ga_B$. In fact, starting with some Lyapunov exponent $\ga \geq \ga_I$ at %some time in $[n,2n]$, we can assure that the Lyapunov exponent does not sink below $\ga_I$ up until time $2n$. 

%The idea is then that these intervals deleted at an early stage 

%\begin{Rem}
%As the perturbation $\vep > 0$ tends to zero we note that $N_S=\min(N_l)$ in the lemmna will grow to infinity. Hence we may choose $N_l$ as large as %we want. 
%  \end{Rem}

\section{Large deviations} \label{large-deviations}

We will make an induction over time intervals of the type $[n,2n]$ and assume from now on that we make partitions as described above. Given a good situation at time $n$ with growth of the derivative, we first delete the parameters not satisfying the basic assumption up until time $2n$. But according to Lemma \ref{adjacent-returns}, this means that we may lose some part of the Lyapunov exponent. Therefore we make use of the famous large deviation argument, developed by Benedicks and Carleson, to restore the Lyapunov exponent up until time $2n$. 

This section is very similar to older papers \cite{BC2}, \cite{MA-Z} et al.  

\begin{Lem} \label{ba-measure}
  Suppose that $\xi_{\nu,l}(\om)$ is an essential return into $U_i$, and that the Lyapunov exponent $\ga \geq \ga_I$ for all critical points, $\om \subset \EE_{\nu,l}(\ga) \cap \BB_{\nu,l}$. Then if $\nu'$ is the next return time, we have that the set $\hat{\om}$ of parameters in $\om$ that satisfies the basic assumption, has Lebesgue measure
  \[
m(\hat{\om}) \geq (1-e^{-\al \nu}) m(\om). 
    \]
  \end{Lem}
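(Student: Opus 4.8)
The plan is to estimate the measure of the set of parameters deleted at the essential return $\xi_{\nu,l}(\om)$ because they fail the basic assumption before the next return time $\nu'$. Since $\om$ is a partition element that is essential at time $\nu$, strong distortion (the consequence of Lemma \ref{main-distortion}) and Lemma \ref{transv1} give good geometry (\ref{good-geometry}): the curve $\xi_{\nu,l}(\om)$ is essentially an affine image, so its diameter is $\sim_2 e^{-r}/r^2$ where $\dist(\xi_{\nu,l}(\om),Crit_{\om}) \sim_{\sqrt e} e^{-r}$, and comparable statements hold for the parameter derivative. The basic assumption fails before $\nu'$ precisely when some $\xi_{k,j}(a)$ for the relevant indices $k$ (for the critical point $l$ itself, $k$ up to $\nu'$; for $j\neq l$, $k$ up to the larger range controlled by the star-condition, but at an essential return we only need the part beyond $\nu$) lands closer than $K_b e^{-2\al k}$ to $Crit_a$.

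First I would localize the bad set inside $\om$. For the critical point $c_l$ itself, the only genuinely new constraint at this return is at time $\nu$ (the bound period $p$ is short by Lemma \ref{boundexp}, and during the free period up to $\nu'$ there are no returns into $U$, so no new small-distance constraints arise there except possibly at $\nu'$ itself, which is the next essential return and handled at the next stage). So the parameters to delete are those $a$ with $\dist(\xi_{\nu,l}(a),Crit_a) < K_b e^{-2\al\nu}$. Now use good geometry: $\xi_{\nu,l}$ restricted to $\om$ is almost affine with $|\xi_{\nu,l}(\om)| \sim_2 e^{-r}/r^2$, and since the return is essential we have $e^{-r}/r^2 \lesssim |\xi_{\nu,l}(\om)| \le 2\dist(\xi_{\nu,l}(\om),Crit_{\om})/r^2$, i.e. the whole curve sits at distance $\sim e^{-r}$ from $Crit_{\om}$ with $e^{-r} \gtrsim e^{-\al\nu}$ forced by the slow recurrence / basic assumption already in force (cf. the estimate $e^{-r}\ge e^{-\al m_1}$ in the text). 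Thus the sub-arc of $\xi_{\nu,l}(\om)$ lying within $K_b e^{-2\al\nu}$ of a critical point has length at most a constant multiple of $K_b e^{-2\al\nu}$, and pulling back by the almost-affine map the deleted parameter set has relative measure $\lesssim K_b e^{-2\al\nu} r^2 / e^{-r} \lesssim e^{-2\al\nu} r^2 e^{\al\nu} = r^2 e^{-\al\nu}$, which is $\le e^{-\al\nu/2}$, say, for $\nu \ge N$ large. A cleaner bookkeeping: the deleted fraction is at most $C\, K_b e^{-2\al\nu}/\diam(\xi_{\nu,l}(\om)) \le C K_b e^{-2\al\nu} \cdot 2 r^2 e^{r} \le C' r^2 e^{-\al\nu}$.

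Next I would add the contributions from the other critical points $j\neq l$: during $[\nu,\nu']$ one also needs $\dist(\xi_{k,j}(a),Crit_a) \ge K_b e^{-2\al k}$ for $k$ in the allowed range. But by the definition of $\EE_{\nu,l,\star}(\ga)\cap\BB_{\nu,l,\star}$ together with weak parameter dependence (Lemma \ref{weak-distortion}), each orbit $\xi_{k,j}(\om)$ for $k$ in this range is either still tiny (diameter $\le S$, hence contained in or disjoint from $U$ as a whole) or, if it has a return, that return can be treated exactly like the $c_l$ return above, contributing a geometric sum $\sum C' r^2 e^{-\al k}$ over the returns with $k \ge$ some fraction of $\nu$. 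Summing the geometric series in $r$ (there are only finitely many scales, $\De \le r \le$ something like $2\al k$) and then over the at most $2d-2$ critical points and their returns, the total deleted fraction is bounded by a constant times $r^2 e^{-\al\nu}$ summed over $k$ from roughly $\nu$ to $2\nu$, which is still $\le e^{-\al\nu}$ after absorbing the polynomial factor into the exponential, provided $N$ is large enough. This yields $m(\hat\om) \ge (1 - e^{-\al\nu}) m(\om)$.

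The main obstacle I anticipate is the bookkeeping for the ``other'' critical points $j \ne l$: one must verify that the $\EE_{\nu,l}\cap\BB_{\nu,l}$ (or the star-versions) assumptions genuinely license the binding/distortion control for those orbits over the whole range $k \le (2K\al/\ga_I)\nu$ (respectively $\le 4K\al\nu/\ga_I$), so that good geometry (\ref{good-geometry}) applies to $\xi_{k,j}(\om)$ and the per-return deletion estimate above is valid. Once good geometry is available on every relevant arc, the measure estimate is a routine geometric-series computation; the subtlety is entirely in making sure the hypotheses propagate to all critical points over the correct time window — this is exactly what the ``$\star$'' decorations in the definitions were introduced to handle, so I would invoke them carefully here. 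A secondary, purely technical point is to be careful that the polynomial factor $r^2$ (from the $e^{-r}/r^2$ Whitney condition) never spoils the exponential smallness; this is fine because $r \le C\al\nu$ and $\al$ is tiny, so $r^2 e^{-\al\nu/1} = e^{-\al\nu + 2\log r} \le e^{-\al\nu/2}$ for $\nu \ge N$.
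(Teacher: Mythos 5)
Your proposal is structurally off target. Re-read the hypothesis: $\om \subset \BB_{\nu,l}$ \emph{already} guarantees $\dist(\xi_{\nu,l}(a),Crit_a) \geq K_b e^{-2\al\nu}$ for every $a\in\om$, so the set you spend most of your argument deleting — parameters with $\dist(\xi_{\nu,l}(a),Crit_a) < K_b e^{-2\al\nu}$ — is empty by assumption. There is nothing to delete at time $\nu$. And when you then say that the constraint at $\nu'$ is ``handled at the next stage,'' you have discarded exactly the content of the lemma: the whole point is to estimate the measure of parameters that fail the basic assumption at the \emph{next} return $\nu'$. Your proof never addresses this.

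The missing ingredient is the one the paper's proof hinges on: the growth of $\diam(\xi_{\nu',l}(\om))$ between $\nu$ and $\nu'$. One starts from $\diam(\xi_{\nu,l}(\om)) \sim e^{-r}/r^2$ (Whitney condition at the essential return), pushes forward through the bound period using Lemmas \ref{boundexp} and \ref{bound-dist} to get $\diam(\xi_{\nu+p+1,l}(\om)) \gtrsim e^{-(7/2)\al p - 2\log r}$, and then through the free period with the outside expansion Lemma \ref{oel} to conclude $\diam(\xi_{\nu',l}(\om)) \geq e^{-8\al d r/\ga}$. Only with this lower bound in hand can one compare against the new exclusion scale $e^{-2\al\nu'}$ and, via the Main Distortion Lemma and Lemma \ref{transv1}, obtain a deleted fraction $\leq 2 e^{-2\al\nu'}/\diam(\xi_{\nu',l}(\om)) \leq e^{-\al\nu}$. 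Your argument omits this growth estimate entirely, and without it the lemma does not follow. A secondary issue: your bound $r \lesssim \al\nu$ is what the basic assumption gives at the \emph{first} return near $f_0$ (the text's $m_1$ discussion), but in the general induction $\BB_{\nu,l}$ only yields $r \leq 2\al\nu + \mathrm{const}$, so even your estimate at time $\nu$ (which, again, is not needed) would not close as written — if $r$ were close to $2\al\nu$, your ratio $r^2 e^{-2\al\nu+r}$ is not small. This is precisely why the deletion must be done at $\nu'$, after the interval has been stretched by the bound-period expansion.
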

  \begin{proof}
    This follows quite easily, since the interval $\xi_{\nu+p}(\om)$ grows rapidly during the bound period $p$. By Lemma \ref{boundexp}, Lemma \ref{bound-dist}, Lemma \ref{oel}, and Lemma \ref{main-distortion}, we get, for any $a\in \om$,
    \begin{align}
      \diam(\xi_{\nu+p+1,l}(\om)) &\sim \frac{e^{-rd_i}}{r^2} |Df^p(\xi_{\nu+1,l}(a))| \nonumber \\
      &\sim  \frac{|\xi_{\nu+p+1,l}(a) - \xi_{p+1,i}(a) |}{r^2} \nonumber \\
      &\geq C e^{-\al (p+1) - 2 \log r} \dist(\xi_{p+1,i}(a),Crit_{a}) \nonumber \\
      &\geq C K_b e^{-2\al (p + 1) - \al (p+1) - 2 \log r} \geq e^{-(7/2) \al p - 2 \log r},
    \end{align}
    if $p$ is large. So,
    \begin{multline} \nonumber 
      \diam(\xi_{\nu',l}(\om)) \geq \diam(\xi_{\nu+p,l}(\om)) C' \la_1^{\nu'-(\nu+p)} \geq C' e^{-(7/2) \al p - 2\log r} \\
      \geq  e^{-7 \al d r/\ga - 2\log r } \geq e^{ - \frac{8 \al d}{\ga} r}.  
    \end{multline}
Recalling the distortion control from the main distortion Lemma \ref{main-distortion} together with Lemma \ref{transv1}, we see that the measure of parameters deleted at $\nu'$ is 
    \[
     \frac{ |\om| - |\hat{\om}|}{|\om|}  \leq 2 \frac{e^{-2 \al \nu'}}{\diam(\xi_{\nu'}(\om))} \leq
     2e^{-\al ( 2 - \frac{8 \al d}{\ga} ) \nu} \leq e^{-\al \nu},
        \]
        since $\al K/\ga \leq 1/100$, $K = \max(d)$ (maximal degree of the critical points).  
    \end{proof}

%    In particular, given $\om \subset \BB_{\nu,l}$, applying the above lemma to all critical points $j \neq l$, then we have that the set $\om_{\star} = \om \cap %\BB_{\nu,l,\star} \subset \om$ has Lebesgue measure
%    \[
%m(\om_{\star}) \geq (1-e^{-(K\al^2 /\ga_0) \nu})m(\om). 
%      \]

      We now define escape time and escape situation. Let $U^2$ be a neighbourhood of $Crit(f_0)$ such that $U^2 = \cup_j B(c_j, \de^2) \subset U$.  We say that a {\em deep return} is characterised by $\xi_{n,l}(\om) \cap U^2 \neq \emptyset$ and a {\em shallow return} means that $\xi_{n,l}(\om) \cap U^2 = \emptyset$ but $\xi_{n,l}(\om) \cap U \neq \emptyset$. We then speak of deep returns into $U^2$
      and shallow returns into $U \sm U^2$ even if the actual curve $\xi_{n,l}(\om)$ does not entirely lay inside $U^2$ or $U \sm U^2$ respectively. 
      We also let $\om_n(a)$ be the corresponding partition element following the parameter $a$, i.e. the unique $\om$ such that $\xi_n(\om)$ has diameter bounded by $S$ if $ \xi_n(\om) \cap U = \emptyset$ and bounded by $\dist(\xi_n(\om),Crit_{\om}) / (\log \dist(\xi_n(\om),Crit_{\om}) )^2$ if $ \xi_n(\om) \cap U \neq \emptyset$. 
      \begin{Def}
        We say that $\xi_{n}(\om)$ has escaped, or is in escape position, if $\diam(\xi_n(\om)) \geq S$, and the bound period has passed. 

        The escape time for a parameter $a \in \om$ for a deep return $\xi_{\nu,l}(\om)$ into $U^2$ is defined as the least number $n - (\nu + p) \geq 0$ (where $p$ is the bound period for the return) such that $\xi_{n,l}(\om_n(a))$ has reached escape position. We write $E_l(a,\nu) = n-(\nu + p)$ for this escape time. We also define the escape time for shallow returns, i.e. if  $\xi_{\nu,l}(\om) \subset U \sm U^2$, to be equal to zero.

        If some parameter $a \in \om$ has that $\xi_{\nu',l}(\om_{\nu'}(a))$ does not satisfy the basic approach rate condition, i.e. returns too deep for some $\nu' > \nu$ before it escapes, then those parameters get deleted and we put $E_l(a,\nu) = -\infty$.   
        \end{Def}
      
    \begin{Lem} \label{q-time}
Suppose that $\xi_{\nu,l}(\om)$ is an essential return into $U_i$, $\om \in \EE_{\nu,l}(\ga) \cap \BB_{\nu,l}$, $\ga \geq \ga_I$ and that $\dist(\xi_{\nu,l}(\om),Crit_{\om}) \sim_{\sqrt{e}} e^{-r}$. Put $h = 4K^2/\ga_I$. Then if $q = n - (\nu + p)$ where $n$ is the next essential return or the time when $\xi_{n,l}(\om)$ is in escape position, which ever comes first, we have the estimate
      \[
q \leq hr.
\]
%where $h = K/\ga_I$. 
      \end{Lem}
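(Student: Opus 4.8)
The plan is to track the diameter of $\xi_{n,l}(\om)$ from the end of the bound period $\nu+p$ forward, and show it must reach the large scale $S$ within $hr$ steps — unless an essential return intervenes first, which only stops the clock earlier. The starting point is the estimate from the proof of Lemma \ref{ba-measure}: right after the bound period following the return $\xi_{\nu,l}(\om)$ into $U_i$ at depth $\dist(\xi_{\nu,l}(\om),Crit_{\om}) \sim_{\sqrt e} e^{-r}$, one has $\diam(\xi_{\nu+p+1,l}(\om)) \geq e^{-(7/2)\al p - 2\log r}$, and since by Lemma \ref{boundexp} we have $p \leq 2d_i r/\ga \leq 2K r/\ga_I$, this gives a lower bound of the form $\diam(\xi_{\nu+p,l}(\om)) \geq e^{-c \al r/\ga_I}$ for an explicit constant $c$ depending only on $K$ (using also $2\log r \ll r$). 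So the interval starts, after the bound period, at size at least roughly $e^{-C\al r/\ga_I}$.

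Next I would run the growth estimate forward. During the free period outside $U$, Lemma \ref{oel} gives a factor $\la^j$ (or $\la_1^j$); at each subsequent (non-essential) return — shallow or inessential or pseudo-return — the interval may shrink, but by Lemma \ref{double} it at least doubles between consecutive free returns, and the bound periods contribute net growth by the remark after Lemma \ref{boundexp} (namely $|\xi_{\nu+p}(a)-\xi_{\nu+p}(b)| \geq |\xi_\nu(a)-\xi_\nu(b)|$). The cleanest way is: between $\nu+p$ and the stopping time $n$, the map restricted to $\om$ has derivative at least $C e^{\ga_B' j}$ (for some exponent $\ga_B' \geq \ga_I - 4\al K > 0$) coming from Lemma \ref{adjacent-returns} composed through the bound and free periods, together with strong distortion from Lemma \ref{main-distortion}. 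Hence $\diam(\xi_{\nu+p+q,l}(\om)) \geq c\, e^{-C\al r/\ga_I} e^{\ga_I q /2}$ (say), as long as we have not yet hit an essential return and not yet escaped. If $q > hr = (4K^2/\ga_I) r$, then this lower bound exceeds $S$ (for $r \geq \De$ large, absorbing the constants $c$, $C$, $S$, $\al$, since $\al$ is tiny compared to $\ga_I$), contradicting that $\xi_{n,l}(\om)$ had not yet escaped. Therefore $q \leq hr$.

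The main obstacle is bookkeeping the interaction between bound periods and free periods over the window $[\nu+p,\,n]$ without circularity: one must know the CE-condition and the basic assumption hold on $\om$ throughout this window in order to invoke Lemmas \ref{boundexp}, \ref{adjacent-returns}, \ref{main-distortion}, \ref{double}. This is guaranteed because $\om \in \EE_{\nu,l}(\ga) \cap \BB_{\nu,l}$ with $\ga \geq \ga_I$ propagates forward: Lemma \ref{adjacent-returns} keeps the exponent $\geq (9/10)\min(\ga,\ga_H) \geq \ga_C > \ga_I$ up to the next free return, and parameters violating the basic assumption are deleted (they get $E_l(a,\nu)=-\infty$ and are not counted). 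The only genuinely delicate point is the choice of $h = 4K^2/\ga_I$: the exponent in the exponential growth of $\diam(\xi_{\cdot,l}(\om))$ after the bound period is essentially $\ga_I/2$ per free iterate (worst case), while the initial deficit after the bound period is of order $\al r/\ga_I$ times a constant controlled by $K$; matching these, and noting every iterate in $[\nu+p,n]$ contributes at least a definite fraction of $\ga_I$ to the log-diameter (free iterates directly, bound iterates via Lemma \ref{boundexp} and the post-remark), forces $q$ to be at most a constant multiple of $r$, and a short computation confirms $4K^2/\ga_I$ is a valid such constant once $\al \ll \ga_I/(400K\Ga)$ as assumed.
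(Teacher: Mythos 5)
Your overall strategy — track the diameter of $\xi_{\cdot,l}(\om)$ from the end of the bound period, show it grows exponentially, and use the ``not yet escaped'' constraint $\diam(\xi_{n,l}(\om))\leq S$ to bound $q$ — is exactly the paper's strategy, and the starting point $\diam(\xi_{\nu+p+1,l}(\om))\geq e^{-(7/2)\al p-2\log r}$ matches what the paper uses. There are, however, a few imprecisions that would need to be repaired. First, the claimed per-iterate growth rate ``essentially $\ga_I/2$'' is wrong during bound periods: Lemma \ref{boundexp} only gives $|Df_a^{p_j}(\xi_{m_j}(a))|\geq e^{(\ga/(2d_i))p_j}$, i.e.\ rate $\geq\ga_I/(2K)$, not $\ga_I/2$. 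Your final bound $h=4K^2/\ga_I$ absorbs this factor of $K$, so the conclusion survives, but the stated exponent is off. Second, Lemma \ref{adjacent-returns} is not the right tool here: it controls $|Df_a^{\nu_{k+1}}(v_a)|$ along the whole critical orbit from time $0$, not the relative derivative $|Df_a^j(\xi_{\nu+p}(a))|$ within the window $[\nu+p,n]$. What the paper actually uses is the direct decomposition into bound periods (estimated via (\ref{b-exp}), i.e.\ $D_{p_j+1}\geq e^{r_j(1-7\al K/\ga_I)}$) and free periods (Lemma \ref{oel}), then bounds $q=\sum p_j+\sum q_j$ via $p_j\leq 2Kr_j/\ga_I$ and the constraint $\sum r_j(1-8\al K/\ga_I)+\ga_C\sum q_j\leq (8\al K/\ga_I)r+K\De$; you should replace your appeal to Lemma \ref{adjacent-returns} with that decomposition. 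Third, you wrote $\ga_C>\ga_I$, but in fact $\ga_C=(1/4)\min(\ga_H,\ga_0)(1-\tau)<(1/3)\min(\ga_H,\ga_0)(1-\tau)=\ga_I$; the right statement is that the exponent never drops below $\ga_C$. Finally, the paper explicitly handles the possibility that $n$ falls in the middle of a bound period and the contribution of the pseudo-return chain at the end (which is the source of the $K\De$ term); your sketch does not mention these, though they only affect constants that $h=4K^2/\ga_I$ already has slack to absorb. In short: correct idea, essentially the paper's route via diameter growth, but the exponent during bound periods, the misuse of Lemma \ref{adjacent-returns}, and the $\ga_C$ inequality need correction, and the edge cases need to be acknowledged.
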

\begin{proof}
Let us put $D_j = |Df^j(\xi_{\nu,l}(a))|$, for $a \in \om$.  By the definition of the bound period, the basic assumption, and Lemma \ref{boundexp}, for all $a \in \om$, 
  \begin{multline} \label{b-exp}
    D_{p+1} \geq C e^{-\al (p+1)} \dist(\xi_{p+1,i}(a), Crit_{a}) e^r \\
    \geq e^{-2 \al(p+1) - \al (p+1)  + r}  \geq e^{r(1-\frac{7 \al K}{\ga})} \geq e^{r(1-\frac{7 \al K}{\ga_I})}.
\end{multline}
Let $m_j$ be the inessential returns after $\nu$, i.e. $\nu < m_1 < m_2 < \ldots < m_s < n$. Let $p_j$ and $q_j$ be the bound and free periods respectively following $m_j$. Let $p_0$ and $q_0$ be the bound and free periods following the return $\nu$. It can happen that escape takes place before a return takes place, and then $q_s$ is not a complete free period. It can also happen that $n$ is a time during the bound period for $m_s$. But then we have $m_s+p_s - \nu$ as an upper bound for $q$ and we can assume that $q > m_s+p_s$.

Suppose that $\dist(\xi_{m_j}(\om),Crit_{\om}) \sim_{\sqrt{e}} e^{-r_j}$, and let $r=r_0$. Suppose that $n=\nu'$ is a return. Then, as long as the bound period is bounded by $(2 K \al /\ga_I)\nu$, we can use the same estimate as (\ref{b-exp}), and Lemma \ref{oel}, to obtain
\begin{align}
  \diam(&\xi_{n}(\om)) \sim |Df_a^{n - \nu}(\xi_{\nu}(a))| \diam(\xi_{\nu}(\om)) \nonumber \\
  &=
    \prod_{j=0}^{s} |Df_a^{p_j}(\xi_{m_j}(a))| C' e^{\ga_H q_j} \diam(\xi_{\nu}(\om)) \nonumber \\
  &\geq
    e^{r(1-\frac{7 \al K}{\ga_I})}  C' e^{\ga_H q_0} \diam(\xi_{\nu}(\om)) \prod_{j=1}^s e^{r_j(1-\frac{7 \al K}{\ga_I})} \prod_{j=1}^s C' e^{q_j \ga_H}   \nonumber \\
  &\geq
 e^{-r \frac{8 \al K}{\ga_I} + q_0 \ga_H} \prod_{j=1}^s e^{r_j(1-\frac{8 \al K}{\ga_I})  +  q_j \ga_H}.  \label{last}
\end{align}

If $n$ was not a return, then let $q_1 < \ldots < q_t$ be the pseudo-returns after $m_s+p_s$. Between each pair of pseudo-returns we have uniform expansion of the derivative according to Lemma \ref{adjacent-returns}.  Between $m_s+p_s$ and $q_1$ we also have uniform expansion according to Lemma \ref{oel}. So we only need to conider the last time period, from $q_t$ to $n$. Since $\xi_{q_t}(\om)$ may belong to $U' \sm U$ we have $|Df_a(\xi_{q_t}(a))| \geq e^{-K \De}$ for all $a \in \om$. After time $q_t$ we can use the binding information, Lemma \ref{bound-dist} and the first statement of Lemma \ref{oel} with $U = U'$, depending on whether $n$ belongs to the bound period or not. In any case we get uniform expansion; $|Df_a^{n-q_t-1}(\xi_{q_t+1}(a))| \geq C e^{\min (\ga,\ga_H) (n-q_t-1)}$.  In other words, with $z = \xi_{m_s+p_s}(a)$, for $a \in \om$, 
\begin{align}
  |Df_a^{n-(m_s+p_s)}(z)| &= |Df_a^{q_1-(m_s+p_s)}(z)| |Df_a^{q_2-q_1}(f_a^{q_1-(m_s+p_s)}(z))| \nonumber \\
    &\cdot \ldots \cdot |Df_a^{q_t-q_{t-1}}(f_a^{q_{t-1}-(m_s+p_s)}(z))| |Df_a(f_a^{q_t-(m_s+p_s)}(z))| \nonumber \\
  &\cdot |Df_a^{n-q_t-1}(f_a^{q_t-(m_s+p_s) + 1}(z))| \nonumber \\
                          &\geq C' e^{\ga_H (q_1 -(m_s+p_s))} e^{\ga_1 (q_t-q_1)} e^{-K \De} C e^{\min(\ga,\ga_H) (n-q_t-1)} \nonumber \\
                            &\geq e^{\ga_C (n-(m_s+p_s))} e^{-K\De},
\end{align}
since $\ga_1 \geq (9/10) \ga_I \geq \ga_C$. 
So we may have to replace $q_s \ga_H$ with $\ga_C q_s - K \De$ in (\ref{last}), where $q_s = (n-(m_s+p_s))$ in this case.

Since $\ga_C < \ga_H$, and $\diam(\xi_{n}(\om))$ is assumed to be at most $S = \vep_1 \de \leq 1$, we therefore get
\[
\sum_{j=1}^s  r_j (1-\frac{8 \al K}{\ga_I}) + \sum_{j=0}^s q_j \ga_C \leq r \frac{8 \al K}{\ga_I} + K\De. 
\]
Hence, if $q  = \sum_{j=1}^s p_j + \sum_{j=0}^s q_j$, we get
\begin{align}
  q &= \sum_{j=1}^s p_j + \sum_{j=0}^s q_j \leq \sum_{j=1}^s \frac{2K}{\ga_I} r_j + \sum_{j=0}^s q_j \nonumber \\
    &= \sum_{j=1}^s \frac{4 K}{\ga_I} (1 - \frac{8 \al K}{\ga_I}) r_j + \frac{1}{\ga_C} \sum_{j=0}^s q_j \ga_C \nonumber \\
  &\leq \max \biggl( \frac{4 K}{\ga_I}, \frac{1}{\ga_C} \biggr)
  \biggl(  \sum_{j=1}^s  r_j (1-\frac{8 \al K}{\ga_I}) + \sum_{j=0}^s q_j\ga_C \biggr)  \nonumber \\
  &\leq \max \biggl( \frac{4 K}{\ga_I}, \frac{1}{\ga_C} \biggr) \biggl( \frac{8 \al K}{\ga_I} r +K\De \biggr) \leq \frac{4K^2}{\ga_I}r,
\end{align}
since $4K/\ga_I < 1/\ga_C$, and where we also used that $\al \leq 3 \ga_I /(400K \Ga)$. Since $Kr/\ga_I \leq (2 K \al /\ga_I)\nu$, we can use the binding information the whole time.
%%%%% NEDAN BERÄKNING DÄR INGEN ÅTERKOMST SKER EFTER NU. 
%\begin{align}
%  \diam(\xi_{n}(\om)) &\sim |Df_a^{n - \nu}(\xi_{\nu}(a))| \diam(\xi_{\nu}(\om)) %\nonumber \\
%  &=|Df_a^{p_0}(\xi_{m_j}(a))| e^{\ga_C (n-(\nu + p_0))} e^{-K\De} %\diam(\xi_{\nu}(\om)) \nonumber \\
% &\geq e^{-r \frac{8 \al K}{\ga_I} + \ga_C (n - (\nu + p_0)) - K\De} ,
%\end{align}
%which gives
%\[
%q = n-\nu-p_0 = \frac{1}{\ga_C} (n-\nu-p_0)\ga_C \leq \frac{1}{\ga_C} (K \De + %\frac{8 \al K}{\ga_I} ) \leq \frac{2K}{\ga_I}.
%  \]
\end{proof}

We will now estimate the measure of the set of parameters having a specific history for the returns in a time window of the form $[n,2n]$. For simplicity, suppose that $\xi_{\nu}(\om_0)$ is an essential return with $\dist(\xi_{\nu}(\om_0),Crit_{\om_0}) \sim_{\sqrt{e}} e^{-r_0}$ and $\nu \geq n$ ($\nu$ should be though of as the smallest return time after $n$).
% Let us also assume that $\om_0 \subset \EE_{\nu,l}(\ga_I) \cap \BB_{\nu,l}$, so that we can use the binding information of all other critical points up %to time $2n$.
Let us study the evolution of $\xi_m(\om_m(a))$ as $m$ goes through a sequence of essential returns $\nu_1, \nu_2, \ldots, \nu_s \leq 2n$. Let us also assume that $\om_{\nu_j}(a) \subset \EE_{\nu_j,l}(\ga_I) \cap \BB_{\nu_j,l}$, for these returns so that we can use the binding information of all other critical points up to time $2n$. This is not a strong assumption, as we now explain. Suppose $a \in \EE_{n,l}(\ga_B) \cap \BB_{2n,l}$, i.e. we assume that the basic approach rate condition is fulfilled up until time $2n$. The Lyapunov exponent will not drop too much at each return in the interval $[n,2n]$, because we can use Lemma \ref{boundexp} at each return and get a trivial lower bound for the expansion, namely $1$ during the bound period. But this means that the actual Lyapunov exponent is bounded from below, and we get a trivial bound,
\[
|Df^{2n}(v_l(a))| \geq e^{\ga_B n} \geq e^{2n \ga_I}, 
  \]
since $\ga_I \leq \ga_B/2$. In other words, $a \in \EE_{2n,l}(\ga_I) \cap \BB_{2n,l}$. 

By the Main Distortion Lemma \ref{main-distortion}, which then gives good geometry control, the diameter of $\xi_{\nu_j+p_j}(\om_{\nu_j+p_j}(a))$ is more or less equal to the length of the curve (which is then more or less straight), i.e. $\sim e^{-(7 K \al/\ga) r_j}$, see inequality (\ref{b-exp}). After the free period it may expand further, and to get rid of the constant $C'$ in Lemma \ref{oel}, we may say that the curve
$\xi_{\nu_{j+1}}(\om_{\nu_{j+1}}(a))$ has a diameter at least $e^{-(8 K \al / \ga) r_j}$. 
We therefore get, with $\ga \geq \ga_I$, that the measure of those parameters $b \in \om_{\nu_j}(a)$ entering into $U$ with  $\dist(\xi_{\nu_{j+1}}(b), Crit_{b} ) \sim_{\sqrt{e}} e^{-r_{j+1}}$ is
\begin{equation}
 m(\om_{\nu_{j+1}}(a)) =  m( \{ b \in \om_{\nu_{j}}(a) : \xi_{\nu_{j+1}}(b) \sim_{\sqrt{e}} e^{-r_{j+1}} \} )
  \leq C \frac{e^{-r_{j+1}}}{e^{-(8 K \al/\ga) r_j}}  m(\om_{\nu_j(a)}) ,
\end{equation}
(recall that we do not partition $\om_{\nu_j}(a)$ until the next return, so $\om_{\nu_j}(a)= \om_{\nu_{j+1}-1}(a)$). 
So suppose now that we have a sequence of $s$ essential returns $\nu_1, \nu_2, \ldots, \nu_s \leq 2n$. Let us also assume that we always have a lower bound, $\ga_I$, for the Lyapunov exponent, i.e. $a \in \EE_{2n,l}(\ga_I) \cup \BB_{2n,l}$ for the parameters we are considering. Then the portion from the starting interval, call it $\om_0 = \om_{\nu}(a)$ for some $a \in \om_0$, that has this specific history is, with $\om_j = \om_{\nu_j}(a)$, 
\begin{equation} \label{portion}
  \frac{m(\om_s)}{m(\om_0)} = \prod_{j=0}^{s-1} \frac{m(\om_{j+1})}{m(\om_j)} \leq C^s \prod_{j=0}^{s-1} \frac{e^{-r_{j+1}}}{e^{-(8 K \al/\ga) r_j}}.
\end{equation}

We continue to follow \cite{MA-Z} and \cite{BC2} more or less verbatim. Let $R=r_1+r_2 + \ldots + r_s$. We now compute the number of combinations of choosing such $r_j$ given that $r_j \geq \De \geq 0$. 
Let us not yet take into account that we are partitioning the intervals into smaller intervals such that
  \begin{equation} \label{squared-partitions}
\diam(\xi_{\nu_{j}}(\om)) \sim_{\sqrt{e}} e^{-r_{j}} /r_{j}^2 , \text{ for each $j = 1, \ldots, s$,}
\end{equation}
where $\om = \om_{\nu_j}(a)$. Hence for each such set we have another $r_{j}^2$ possibilities.

By the pigeonhole principle, this number of combinations is, disregarding from these extra $r_j^2s$ possibilities, 
  \[
    \binom{R+s-1}{s-1}.
  \]
  By Stirling's formula this can be estimated as follows, using that $R \geq s \De$, 
  \begin{align}
    \binom{R+s-1}{s-1} &\leq C \frac{1}{\sqrt{2\pi}} \frac{(R+s-1)^{R+s-1} e^{-R-s+1}}{R^R e^{-R} (s-1)^{s-1} e^{-s}}      \sqrt{\frac{R+s-1}{R(s-1)}} \nonumber \\
    &\leq \frac{R^{R+\frac{R}{\De}} (1+\frac{1}{\De})^{(1+\frac{1}{\De})R}}{R^R (\frac{R}{\De})^{R/\De}} \nonumber \\
    &\leq \biggl( \De^{1/\De} (1 + \frac{1}{\De})^{1+\frac{1}{\De}} \biggr)^R \leq 2(1 + \eta(\De))^R
\end{align}
if $\De$ is large enough, where $\eta(\De) = \OO(1/\De)$. 

Taking into account now (\ref{squared-partitions}), we get that the number of combinations is
\[
2(1+\eta(\De))^R \prod_{j=1}^s r_j^2 \leq e^{R/32} (1 + \eta(\De))^R.
  \]

% Given the essential return $\xi_{\nu,l}(\om)$, consider the set of parameters $a \in \om$ that have the specific %history of essential returns at times $\nu_j$, $j=1,\ldots,s$ ($\nu=\nu_0$), and thereafter escaping.   For each %such sequence of $\nu_j$:s we get a sequence of $r_j$:s such that $\dist(\xi_{\nu_j,l}(a), Crit_{a}) \sim %e^{-r_j}$.

We can rewrite (\ref{portion}) to get, (recall the condition on $\al$),
\[
 \frac{m(\om_s)}{m(\om_0)}= C^s e^{r_0 (8 \al K/\ga) - \sum_{j=1}^{s-1} r_j (1- 8 \al K/\ga) - r_s } \leq C^se^{r_0 (8 \al K/\ga) - (15/16) R }.
\]

Given an essential return $\xi_{\nu,l}(\om)$, let $A_{s,R} \subset \om$ be the set of those parameters having exactly $s$ essential returns as above before escaping at the $s+1$:st return, for a fixed $R$. Each pair of sequences $\{\nu_j \}_{j=1}^s , \{ r_j\}_{j=1}^s$ defines a unique history for a parameter $a \in A_{s,R}$. Letting $s$ and $R$ vary, then $\om$ gets partitioned into a (likely huge) number of smaller intervals having this specific history. But let us fix $s$ and let $\hat{\om}_s$ be the largest of these partition intervals for this fixed $s$. Then
\[
|A_{s,R}| \leq |\hat{\om}_s| e^{R/32}(1 + \eta(\De))^R.
\]

Now we show that the set of those parameters for which $\xi_n(a)$ returns too frequently and too deep into $U$ has very small Lebesgue measure. This is handled via so famous large deviation argument, originally developed in \cite{BC2}, which is an idea from a probabilistic point of view, and although the system we are considering is deterministic.
%But it behaves like a Markov chain from each escape situation. 

For an essential return $\xi_{\nu,l}(\om)$ into $U^2$ where $\dist(\xi_{\nu,l}(\om), Crit_{\om}) \sim_{\sqrt{e}} e^{-r}$, suppose that $a \in \om$ has $s$ essential returns before it has escaped. Then according to Lemma \ref{q-time} , we have 
\[
E_l(a,\nu) \leq \sum_{j=0}^s h r_j = hr + hR,
\]
where $R = r_1 + \ldots + r_s$. So the escape time $t \leq hr  + hR$, i.e. it is bounded in terms of how deep the returns are. Let us estimate the measure of those parameters that escape at a certain (long) time $t$. 

Put $r=r_0$. We get, given that $\De$ is large enough,
\begin{align}
  m(\{ a \in \om : E_l(a,\nu) = t \}) &\leq \sum_{R \geq t/h - r_0, s \leq R/\De} |A_{s,R}|  \nonumber \\
  &\leq \sum_{R \geq t/h - r_0, s \leq R/\De} |\hat{\om}_s| e^{R/32}(1 + \eta(\De))^R \nonumber \\
                                  &\leq |\om| \sum_{R=t/h - r_0}^{\infty} \sum_{s=1}^{R/\De} e^{R/32}(1 + \eta(\De))^R C^s e^{r_0 (8 \al K /\ga) - (15/16) R } \nonumber \\
                                  &\leq  C' |\om| \sum_{R=t/h - r_0}^{\infty} C^{R/\De} e^{-R(\frac{29}{32} -  \eta(\De)) + (8 K \al /\ga)r_0} \nonumber \\
                                  &\leq  C' |\om| e^{-(\frac{t}{h} - r_0)\frac{28}{32}  + (8 K \al /\ga)r_0} \nonumber \\
  &\leq C' |\om| e^{-\frac{t}{h}\frac{28}{32} + (\frac{28}{32} + \frac{8 K\al}{\ga}) r_0}.
\end{align}
for some constant $C' > 0$. 

By the condition on $\al$, if $\ga \geq \ga_I$, we get an estimate of the measure of parameters for large escape times. Let us suppose that $t > 2hr_0$. Then
\begin{equation} \label{long-escapes}
 m(\{ a \in \om : E_l(a,\nu) = t \}) \leq Ce^{-\frac{t}{3h}} |\om|. 
\end{equation}

We now follow a parameter in $a \in \om$ in a time window $[n,2n]$, and estimate its total time spent on escaping from essential returns. Recall that given an essential return $\xi_{\nu}(\om_{\nu}(a))$, the parameter $a$ has to escape first before we can start counting the next escape time. Let
\[
T_n(a) = T_{n,l}(a) = \sum_{j=1}^{s(a)} E_l(a,\nu_j(a)),
\]
where $\nu_j(a)$ are essential returns after escape situations, and $s=s(a)$ the total number of such returns in $[n,2n]$. We include shallow returns above also but then, by definition, the escape time is zero, so one needs only consider deep returns in the sum.

\begin{Rem} \label{blind-escapes}
A note on the last return $\nu_s$ in the expression of $T_{n,l}(a)$. The escape period of the last return $\nu_s=\nu_s(a)$, by definition, has to transcend into the next time window $[2n,4n]$. If it is too long it may deteriorate the Lyapunov exponent for that parameter too much. Here we make the following convention, namely that if $E(a,\nu_s) \geq 6 h \al n$ (where $6 h \al n \ll n$), then we delete those parameters. They constitute an exponentially small portion of the parameters in $\om$ (put $t=6h \al n$ in equation (\ref{long-escapes})), i.e. has measure $\leq |\om| Ce^{- q n}$, where $q = 2 \al$. We simply disregard from those parameters in the above expression for $T_n(a)$. They can easily be taken care of in the final proof in the next section. 
\end{Rem}

In order to reach the main conclusion that the set of parameters having too many too deep returns in the time window $[n,2n]$ has small measure, we want to estimate, for suitable $\th > 0$, the integral
\[
\frac{1}{|\om|} \int_{\om} e^{\th T_n(a)} \ud a.
  \]
There is some freedom of how to choose $\th$, but let us set $\th = 1/(6h)$. 
\begin{Lem} \label{escape-times}
  Let $\xi_{\nu,l}(\om)$ be a deep essential return with $\dist(\xi_{\nu,l}(\om), Crit_{\om}) \sim_{\sqrt{e}} e^{-r}$, $n \leq \nu \leq 2n$, and $\om \subset \EE_{\nu,l,\star} (\ga) \cap \BB_{\nu,l,\star}$ for some $\ga \geq \ga_I$. Suppose also that all parameters $a \in \om \cap \BB_{2n,l}$ has that $a \in \EE_{2n,l}(\ga_I)$. Then
\begin{align}
  \int_{ \{ a \in \om: 2hr \leq E_l(a,\nu) \leq \nu-n  \}  } e^{\th E_l(a,\nu)} \ud a &\leq C e^{-r/3} |\om|, \\
  \int_{  \{ a \in \om: E_l(a,\nu) \leq 2hr \} } e^{\th E_l(a,\nu)} \ud a &\leq C e^{r/3} |\om|.
\end{align}
\end{Lem}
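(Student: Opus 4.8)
The plan is to read off both estimates from the large-deviation bound \eqref{long-escapes} already proved, the point being that the exponent $\th = 1/(6h)$ was chosen precisely so that $\th < 1/(3h)$, i.e. so that $e^{\th t}$ is summable against the decay $e^{-t/(3h)}$ appearing in \eqref{long-escapes}. No further dynamical input is required: the hypotheses $\om \subset \EE_{\nu,l,\star}(\ga)\cap\BB_{\nu,l,\star}$ and ``$a \in \EE_{2n,l}(\ga_I)$ for all $a \in \om \cap \BB_{2n,l}$'' are exactly the assumptions under which the derivation preceding \eqref{long-escapes} is valid — they guarantee that the binding information of every critical point is available throughout $[n,2n]$, so that Lemma \ref{q-time} and the combinatorial count of essential-return histories apply with the depth of $\xi_{\nu,l}(\om)$ playing the role of $r_0 = r$.

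For the first estimate I would decompose the domain according to the value $t = E_l(a,\nu)$, which is a non-negative integer on the non-deleted parameters (deleted parameters have $E_l(a,\nu) = -\infty$ and contribute $0$ to the integral). Thus
\[
\int_{\{2hr \le E_l(a,\nu) \le \nu-n\}} e^{\th E_l(a,\nu)}\,\ud a \;=\; \sum_{t \ge 2hr} e^{\th t}\, m\bigl(\{a \in \om : E_l(a,\nu) = t\}\bigr) \;\le\; C|\om| \sum_{t \ge 2hr} e^{(\th - 1/(3h))t},
\]
where each term is estimated by \eqref{long-escapes} (legitimate since $t \ge 2hr = 2hr_0$). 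Because $\th - 1/(3h) = 1/(6h) - 1/(3h) = -1/(6h)$, the right-hand side is a convergent geometric series bounded by $C'|\om|\,e^{-2hr/(6h)} = C'|\om|\,e^{-r/3}$, which is the claim.

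For the second estimate nothing probabilistic is needed: on $\{a \in \om : E_l(a,\nu) \le 2hr\}$ one has the pointwise bound $e^{\th E_l(a,\nu)} \le e^{2\th h r} = e^{r/3}$, so the integral is at most $e^{r/3}\,m(\om) = e^{r/3}|\om|$, and one may take $C = 1$. The only place calling for a line of care — rather than a genuine obstacle — is verifying that \eqref{long-escapes} applies verbatim here, namely that the ``deep essential return'' assumption together with the star-conditions match the setup in which \eqref{long-escapes} was established; granting that, both displayed inequalities of the lemma follow from the two elementary bounds above. In short, the substance of the lemma is entirely contained in \eqref{long-escapes}, and what remains is the choice $\th = 1/(6h)$ plus a geometric series.
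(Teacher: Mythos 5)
Your proof is correct and coincides with the paper's own argument: the paper likewise invokes the large-deviation bound \eqref{long-escapes}, decomposes by the integer value $t = E_l(a,\nu)$, and sums the resulting geometric series $\sum_{t\geq 2hr} e^{(\th-1/(3h))t}$ with $\th = 1/(6h)$; the second estimate is the same trivial pointwise bound $e^{\th E_l(a,\nu)} \leq e^{2\th hr} = e^{r/3}$.
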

\begin{proof}
By (\ref{long-escapes}) we have,
\begin{align}
  \int_{  \{ a \in \om : E_l(a,\nu) \geq 2hr \}  } e^{\th E_l(a,\nu)} \ud a &\leq C \sum_{t \geq 2hr} e^{-\frac{t}{3h}} e^{\th t} |\om|  \nonumber \\
  &\leq C e^{-\frac{t}{6h}} |\om| \leq C e^{-r/3}|\om|.
  \end{align}
The second inequality follows directly. 
  \end{proof}

\begin{Lem} \label{escape-measure}
  Let $\xi_{\nu,l}(\om)$ be an essential return with $\dist(\xi_{\nu,l}(\om), Crit_{\om}) \sim_{\sqrt{e}} e^{-r}$, $n \leq \nu \leq 2n$, and $\om \subset \EE_{\nu,l,\star} (\ga) \cap \BB_{\nu,l,\star}$ for some $\ga \geq \ga_I$. Suppose also that all parameters $a \in \om \cap \BB_{2n,l}$ has that $a \in \EE_{2n,l}(\ga_I)$. Then for any $\vep_2 > 0$ there is a $\De_2$ such that if $\De \geq \De_2$ (recall $\de=e^{-\De}$), we have
  \[
 \int_{\om} e^{\th T_{n,l}(a)} \ud a \leq e^{\vep_2 n}|\om|. 
    \]
\end{Lem}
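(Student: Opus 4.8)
The plan is to run the Benedicks--Carleson large deviation scheme in the form used in \cite{BC2} and \cite{MA-Z}: organise the window $[\nu,2n]$ into consecutive \emph{escape epochs} and show that each epoch contributes to $\frac{1}{|\om|}\int_\om e^{\th T_{n,l}(a)}\,\ud a$ only a factor $1+\rho(\De)$ with $\rho(\De)\to0$ as $\De\to\infty$. Concretely, for $a\in\om$, after the bound period and escape time of the return $\xi_{\nu,l}$ the orbit reaches escape position; the next essential return is $\nu_1(a)$, from which $a$ escapes again in time $E_l(a,\nu_1)$, and iterating this gives $T_{n,l}(a)=\sum_j E_l(a,\nu_j(a))$ over the post-escape essential returns $\nu_j$ in $[\nu,2n]$. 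Every epoch occupies at least one iterate, so there are at most $n$ of them; and by Remark \ref{blind-escapes} we may first discard the parameters whose last escape time exceeds $6h\al n$ (a set of measure $\le C|\om|e^{-2\al n}$), so all remaining escape times are $\le 6h\al n$.

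The core is a conditional one-epoch estimate: for any partition element $\om'$ at escape position, so that its image curve has diameter of order $S=\vep_1\de$, and with $\nu'$ the ensuing first essential return,
\[
\frac{1}{|\om'|}\int_{\om'}e^{\th E_l(a,\nu')}\,\ud a\le 1+\rho(\De),\qquad\rho(\De)=\frac{C_3}{\vep_1}e^{-\De/3}.
\]
To prove this I would partition $\om'$ at time $\nu'$ by the return depth $r_0$, where $\dist(\xi_{\nu',l}(\om''),Crit_{\om''})\sim_{\sqrt e}e^{-r_0}$ on each sub-element $\om''$. Shallow returns and the part of $\om'$ with no deep essential return before $2n$ carry escape time $0$ and contribute at most $|\om'|$. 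For a deep return at depth $r_0$: the Main Distortion Lemma \ref{main-distortion} gives bounded distortion, so exactly as in the proof of Lemma \ref{ba-measure} the total measure in $\om'$ of parameters returning at depth $\sim_{\sqrt e}e^{-r_0}$ is $\le C\,e^{-r_0}/S=C\vep_1^{-1}e^{\De-r_0}|\om'|$; on each $\om''$ at that depth, Lemma \ref{escape-times} (its second inequality, together with the tail \eqref{long-escapes} for escape times above $2hr_0$) gives $\int_{\om''}e^{\th E_l(a,\nu')}\,\ud a\le C\,e^{r_0/3}|\om''|$. Summing over the sub-elements at a fixed depth and then over $r_0\ge2\De$ yields $C\vep_1^{-1}e^{\De}\sum_{r_0\ge2\De}e^{-2r_0/3}|\om'|=\rho(\De)|\om'|$, which tends to $0$.

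It then remains to telescope over epochs. Conditioning successively on the partition element reached at the start of each epoch, and using that after the standard refinement the combinatorial data — in particular the escape times $E_l(a,\nu_j)$ — are constant on partition elements, an induction on the number of remaining epochs gives $\int_{\om^{(j)}}e^{\th\sum_{i\ge j}E_l(a,\nu_i)}\,\ud a\le(1+\rho(\De))^{\#\{\text{remaining epochs}\}}|\om^{(j)}|$, the inductive step being precisely the one-epoch estimate on $\om^{(j)}$ followed by summing the uniform bound over the sub-elements $\om^{(j+1)}\subset\om^{(j)}$. Since $\om$ produces at most $n$ epochs, $\int_\om e^{\th T_{n,l}}\,\ud a\le(1+\rho(\De))^n|\om|\le e^{n\rho(\De)}|\om|\le e^{\vep_2 n}|\om|$ once $\De\ge\De_2$ is large enough that $\rho(\De)\le\vep_2$. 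The hypotheses $\om\subset\EE_{\nu,l,\star}(\ga)\cap\BB_{\nu,l,\star}$ and $a\in\EE_{2n,l}(\ga_I)$ on $\om\cap\BB_{2n,l}$ are exactly what lets Lemmas \ref{boundexp}, \ref{q-time} and \ref{escape-times} be applied at every return in $[\nu,2n]$.

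The step I expect to be the main obstacle is the conditional one-epoch estimate, and inside it the geometric claim that, conditioned on reaching escape position, the relative measure of parameters making their next essential return at depth $r_0$ is bounded by $C\vep_1^{-1}e^{\De-r_0}$: this is where the distortion control of Lemma \ref{main-distortion}, the geometry of the nearly straight curve $\xi_{\nu'}(\om')$ near $Crit$, and the partition rule must be combined so that the $e^{r_0/3}$ loss coming from Lemma \ref{escape-times} is genuinely dominated. A secondary technical point is verifying that the escape times are constant on partition elements (so that the telescoping is legitimate) and that the basic-assumption deletions together with the pre-deleted blind escapes only shrink the measures involved, so that no terms are lost in the product.
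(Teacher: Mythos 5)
Your proposal is correct and follows essentially the same approach as the paper's own proof: writing $T_{n,l}$ as a sum of escape times that are constant on the nested partition elements, proving a single-epoch conditional bound $\le 1+\eta(\De)$ by combining the partition-element measure estimate (fraction $\sim e^{-r}/S$ at depth $r$ from the Main Distortion Lemma) with Lemma \ref{escape-times} and the tail \eqref{long-escapes}, and then telescoping over the at most $n$ epochs to get $(1+\eta(\De))^n\le e^{\vep_2 n}$ for $\De$ large. The only cosmetic differences are that you carry the $\vep_1^{-1}$ factor explicitly (the paper absorbs it into the constant) and that you state the telescoping as a forward induction over epochs rather than the paper's backward peeling from the last escape time; these are equivalent.
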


\begin{proof}
Let $\hat{\om} \subset \om$ be a subset of $\om$ such that every parameter $a \in \hat{\om}$ has $s$ number of free returns into $U$ after escape situations. So $T_{n,l}(a)$ consists of $s$ terms of the form $E_l(a,\nu_j(a))$, $j=1,\ldots, s$, where $\nu_1=\nu$. Recall that $E_l(a,\nu_j(a)) = 0$ if the return is shallow. Set $\xi_{n,l}(\om) = \xi_n(\om)$. Every parameter $a \in \hat{\om}$ has a nested sequence of corresponding intervals so that $a \in \om^s \subset \om^{s-1} \subset \ldots \subset \om^1 \subset \hat{\om}$, such that $\xi_{\nu_{j+1}(a)}(\om^j)$ is in escape position and $\xi_{\nu_j(a)}(\ti{\om}^{j})$ is an essential return, $\om^j \subset \ti{\om}^{j}$. We have $\ti{\om}^{1}=\om$, by assumption. We also see that $E_l(a,\nu_j(a))$ is constant on $\om^j=\om^j(a)$ but not on $\om^{j-1}$. We think of $\om^1 = \om^1(a) \subset \hat{\om}$ as an interval around $a$ which has escaped at time  $\nu_2 = \nu_2(a)$ (possibly earlier). Then $\om^2$ is another smaller interval around $a$ which has escaped at time $\nu_3$ (possibly earlier) and so on. In the construction one should think of $\om$ as contained in some larger interval $\om^0$, $\om \subset \om^0$ where $\xi_{\nu_1}(\om^0)$ is in escape position, and where $\xi_{\nu_1}(\om)$ is an essential return.

Since $T_{n,l}(a) = \sum_{j=1}^{s} E_l(a,\nu_j)$, and $E_l(a,\nu_j(a))$ is constant on $\om^{j}$ but not on $\om^{j-1}$, we get,
  \[
    \int_{\om^{s-1}} e^{\th T_{n,l}(a)} \ud a = \sum_{j=1}^{s-1} e^{\th E_l(a,\nu_j)} \int_{\om^{s-1}} e^{\th E_l(a,\nu_{s})} \ud a.
  \]
  Now, $\xi_{\nu_{s}}(\om^{s-1})$ is in escape position and therefore each interval $\om^{s} \subset \om^{s-1,r}$ where $\diam(\xi_{\nu_{s}}(\om^{s-1,r})) \sim e^{-r}$. Also $\om^{s-1}$ is a union of disjoint intervals $\om^{s-1,r}$, i.e.
  \[
\om^{s-1} = \bigcup_{r=\De}^{\infty} \om^{s-1,r}.
    \]
Recall that the escape time $E_l(a,\nu_{s}(a)) = 0$ for $a \in \om^{s-1,r}$ if $r \leq  2\De$. By Lemma \ref{escape-times} we have
\begin{align}
  \int_{\om^{s-1}} e^{\th E_l(a,\nu_{s})} \ud a &\leq |\om^{s-1}|  + \sum_{r \geq 2\De} \int_{\om^{s-1,r}} e^{\th E_l(a,\nu_{s})} \ud a  \nonumber \\
                                                &\leq |\om^{s-1}| + \sum_{r=2\De}^{\infty} \biggl( \int_{ \{a \in \om^{s-1} : E_l(a,\nu_{s}(a)) \geq 2hr \} } e^{\th E_l(a,\nu_{s})} \ud a \nonumber \\
  &+ \int_{ \{ a \in \om^{s-1} : E_l(a,\nu_{s}(a)) \leq 2hr\} } e^{\th E_l(a,\nu_{s})} \ud a \biggl) \nonumber \\
                                                &\leq |\om^{s-1}| +  C \sum_{r=2\De}^{\infty} (e^{r/3} + e^{-r/3})|\om^{s-1,r}|.
\end{align}
Since $\xi_{\nu_{s}}(\om^{s-1})$ is in escape position, by the Main Distortion Lemma the parameters $a$ that enter into the set where  $\diam(\xi_{\nu_{s}}(\om^{s-1,r})) \sim e^{-r}$ has measure $\sim \frac{e^{-r}}{\de} |\om^{s-1}|$. Therefore,
\begin{align}
  \int_{\om^{s-1}} e^{\th E_l(a,\nu_{s})} \ud a &\leq |\om^{s-1}| +  C \sum_{r=2\De}^{\infty} (e^{r/3} + e^{-r/3})  \frac{e^{-r}}{\de} |\om^{s-1}|  \nonumber \\
  &= |\om^{s-1}| (1 + Ce^{-\De/3}) = |\om^{s-1}|(1+ \eta(\De)),
\end{align}
where $\eta(\De) \raw 0$ as $\De \raw \infty$. 

Next, we want to compute the integral over $\om^{s-2}$: Again $\xi_{\nu_{s-1}}(\om^{s-2})$ is in escape position and therefore $\om^{s-2}$ is subdivisioned into disjoint intervals of the type $\om^{s-2,r} \subset \om^2$ as $\om^{s-1}$:
\[
\om^{s-2} = \bigcup_{r=\De}^{\infty} \om^{s-2,r}.
  \]
  Since $E_l(a,\nu_j(a))$ is constant on $\om^j$, we now compute
  \begin{align}
    \int_{\om^{s-2,r}} e^{\th (E_l(a,\nu_s) + E_l(a,\nu_{s-1}))} \ud a
    &= \sum_{\om^{s-1} \subset \om^{s-2,r}} e^{\th E_l(a,\nu_{s-1})} \int_{\om^{s-2,r} \cap \om^{s-1}} e^{\th E_l(a,\nu_{s})} \ud a \nonumber \\
    &\leq \sum_{\om^{s-1} \subset \om^{s-2,r}} e^{\th E_l(a,\nu_{s-1})} (1 + \eta(\De))|\om^{s-1}|  \nonumber \\
    &= (1+ \eta(\De)) \int_{\om^{s-2,r}} e^{\th E_l(a,\nu_{s-1})} \ud a. 
\end{align}
Thus,
\begin{align}
  \int_{\om^{s-2}} &= \sum_{r \geq 2 \De} \int_{\om^{s-2,r}} e^{\th (E_l(a,\nu_{s-1}) + E_l(a,\nu_s))} \ud a   \nonumber \\
                   &\leq (1 + \eta(\De)) \sum_{r \geq 2 \De} \int_{\om^{s-2,r}} e^{\th E_l(a,\nu_{s-1})} \ud a \nonumber \\
  &\leq (1 + \eta(\De)) \int_{\om^{s-2}} e^{\th E_l(a,\nu_{s-1})} \ud a \leq (1 + \eta(\De))^2 |\om^{s-2}|. 
\end{align}
Repeating this $s$ times and noting that $s \leq n$ trivially and that $\eta(\De) \raw 0$ as $\De \raw \infty$, we get
\[
\int_{\om^0} e^{\th T_{n,l}(a)} \ud a \leq (1 + \eta(\De))^s |\om^0| \leq e^{\vep_2 n} |\om^0|.
\]
Since this holds for every set of the type $\hat{\om}$ (and letting $s$ vary) the lemma follows. 
\end{proof}

Finally we can prove the main goal in this section.

\begin{Lem}  \label{esctimes-estimate}
Let $\tau > 0$ be such that $\tau \th > \vep_2$ and suppose that $\xi_{\nu,l}(\om)$ is a deep essential return with $\dist(\xi_{\nu,l}(\om), Crit_{\om}) \sim_{\sqrt{e}} e^{-r}$, $n \leq \nu \leq 2n$, and $\om \subset \EE_{\nu,l,\star} (\ga) \cap \BB_{\nu,l,\star}$ for some $\ga \geq \ga_I$. Suppose also that all parameters $a \in \om \cap \BB_{2n,l}$ has that $a \in \EE_{2n,l}(\ga_I)$. Then
  \[
m(\{a \in \om : T_n(a) \geq \tau n \} ) \leq e^{n(\vep_2 - \th \tau)} |\om|. 
    \]
  \end{Lem}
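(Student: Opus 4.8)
The plan is to deduce this from Lemma \ref{escape-measure} by a routine exponential Chebyshev (Markov) inequality, exactly as in the classical Benedicks--Carleson scheme.

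First I would recall that the hypotheses of the present lemma are exactly the hypotheses of Lemma \ref{escape-measure}, so that lemma applies and gives
\[
\int_{\om} e^{\th T_{n,l}(a)} \ud a \leq e^{\vep_2 n} |\om|,
\]
provided $\De$ is chosen large enough (depending on $\vep_2$), which we may assume since $\tau \th > \vep_2 > 0$ leaves room to fix such a $\De$. Here $\th = 1/(6h)$ as set before the statement of Lemma \ref{escape-measure}, and $T_n(a) = T_{n,l}(a)$ for the critical point $l$ under consideration (the blind escapes of Remark \ref{blind-escapes} having already been removed as an exponentially small portion, so they do not affect the estimate).

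Next I would apply Markov's inequality to the non-negative function $e^{\th T_n(a)}$. On the set $\{ a \in \om : T_n(a) \geq \tau n \}$ we have $e^{\th T_n(a)} \geq e^{\th \tau n}$, hence
\[
e^{\th \tau n}\, m(\{ a \in \om : T_n(a) \geq \tau n \}) \leq \int_{\om} e^{\th T_n(a)} \ud a \leq e^{\vep_2 n} |\om|,
\]
and dividing by $e^{\th \tau n}$ gives
\[
m(\{ a \in \om : T_n(a) \geq \tau n \}) \leq e^{n(\vep_2 - \th \tau)} |\om|,
\]
which is the asserted bound. Since $\tau \th > \vep_2$, the exponent $\vep_2 - \th \tau$ is negative, so this is an exponentially small fraction of $|\om|$, as needed in the parameter-exclusion argument of the next section.

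There is essentially no obstacle here: the only thing to be careful about is that the constant $\De_2$ furnished by Lemma \ref{escape-measure} depends on $\vep_2$, and that $\tau$ must be chosen (as the statement says) with $\tau\th > \vep_2$ so that the final exponent is genuinely negative; both are consistency conditions already built into the hypotheses. One should also note that $T_n(a)$ as used here counts escape times only of deep returns after escape situations (shallow returns contribute $0$ by definition), so the $T_{n,l}$ in Lemma \ref{escape-measure} and the $T_n$ in the statement are the same object, and summing over the finitely many critical points $l$ (absorbing a factor into constants) causes no difficulty.
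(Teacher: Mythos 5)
Your proof is correct and is essentially identical to the paper's: both apply Lemma \ref{escape-measure} and then conclude by the exponential Chebyshev (Markov) inequality, bounding $e^{\th \tau n}\, m(\{T_n \geq \tau n\})$ by $\int_\om e^{\th T_n}\,\ud a$.
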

  \begin{proof}
    We have by Lemma \ref{escape-measure},
    \begin{equation}
      e^{\th \tau n} m(\{a \in \om : T_n(a) \geq \tau n \} ) \leq \int_{ \{ T_n(a) > \tau n  \} } e^{\th T_n(a)} \ud a \leq \int_{\om} e^{\th T_n(a)} \ud a \leq e^{\vep_2 n} |\om|,
\nonumber
    \end{equation}
    from which we conclude that
    \[
m(\{a \in \om : T_n(a) \geq \tau n \} ) \leq e^{n(\vep_2 - \th \tau)} |\om|.
      \]
    \end{proof}

\section{Conclusion and proof of the main theorem}

We make induction over time intervals of the type $[n,2n]$. By Lemma \ref{startlemma}, for a sufficiently small starting interval $\om_0 = (-\vep,\vep)$ around the starting map $f_0$, there are numbers $N_l$ such that $\xi_{N_l,l}(\om_0)$ has grown to the large scale or returned into $U$ with $\dist(\xi_{N_l,l}(\om_0),Jrit_{\om_0}) / (\log (  \dist(\xi_{N_l,l}(\om_0),Jrit_{\om_0})))^2 \leq \diam(\xi_{N_l,l}(\om_0)) $, (i.e. in the case of a return, it has to be essential). Suppose, without loss of generality, that the first critical point ($l=1$) has that $N_1 = \min(N_l)$. Let $\nu_0 \geq N_1$ be the first return into $U$. It follows that $\xi_{\nu_0,1}(\om_0)$ is an essential return.

%We now use the binding information of the other critical points $c_l$, $l \neq 1$, up until time $N_1 \ga/ (2 \al K)$, by Lemma %\ref{boundexp}.  Let us put $n=N_1$. Since $\hat{\al} = 2 \al K / \ga \leq 1$, the binding information can be used for all critical point %until time $2n$.

If $\nu_0 > 2N_1$ then it means we have no more returns in $[N_1,2N_1]$ for $l=1$ so we go on to the next critical point. To start, put $n=N_1$. For each critical point, we consider the returns $\nu_j \in [n,2n]$ and delete parameters according to the basic approach rate condition. If $\hat{\om}_0 \subset \om_0$ is the set that is left from $\om_0$ when we have deleted parameters not satisfying this condition up until time $2n$, then by Lemma \ref{ba-measure}, 
\[
|\hat{\om_0}| \geq (1-e^{-\al n})|\om_0|.
\]
We make this construction for each critical point, and thereby get a set $\Om_l(2N_1)$, which corresponds to $\hat{\om}_0$ for each $l$, and which contains parameters in $\om_0$ that satisfy the basic assumption for the critical point $c_l$. Up until time $n=N_1$ we see that $\om_0 \subset \EE_{N_1,l}(\ga_B)$, by making $\vep$ sufficiently small. Actually we have a stronger statement at this early stage according to the Starting Lemma (the Lyapunov exponents are close to $\ga_0$), but we do not need that. Moreover, by definition we have $\om_0 \subset \BB_{N_1-1,l}$ for all $l$. Obviously, $\Om_l(2N_1) \subset \BB_{2N_1,l}$. 

If we do not do anything more than keeping the parameters satisfying the basic approach rate condition, the Lyapunov exponent may drop in the time window $[n,2n]$, and over time we may lose too much. 
Every return in this time window has a bound period $p_j \leq \nu_j (2 K\al /\ga_I) = \hat{\al} \nu_j \leq 2n$, for the returns $\nu_j \in [n,2n]$, where we have set $\hat{\al} = 2 K \al/\ga_I$. Hence we can use the expansion of the early orbits up until time $2n$ for all such bound periods. We also note that by Lemma \ref{boundexp}, the bound periods are bounded from below by $Kr_j/(2\Ga) \geq K\De/(2\Ga)$. Let $L_j$ be the corresponding free periods. For every parameter which satisfies the basic approach rate condition, by Lemmas \ref{oel} and \ref{boundexp}, using that $a \in \EE_{n,l}(\ga_B)$, we have, if $\de=e^{-\De}$ is sufficiently small,
\begin{multline} \label{exp-degradation}
  |Df^{2n}(v_l(a))| \geq C_0 e^{\ga_B n} \prod_j \bigl( e^{p_j (\ga/(2K))} C' e^{L_j \ga_H} \bigl)  \\
  %=  e^{\sum_j p_j (\ga/(2K)) +  \log C' + \ga_H L_j} 
  \geq e^{\ga_B n} e^{\sum_j p_j (\ga/(4K)) + \ga_H L_j} \geq C_0 e^{\ga_B (1/2) 2n}.
  \end{multline}

Hence up until time $2n=2N_1$, we may have lost some part of the starting Lyapunov exponent ($\ga_B$), but at each return it does not go below $\ga_B/2 > \ga_I$, where $\ga_I$ is a lower bound for most lemmas in the induction process. However, precisely after a return the exponent may drop, but not more than $4 K \al$ because of the basic approach rate assumption (the $2 \al$ is replaced by $4 \al$ to eat up constants), and in general each parameter $a$ we are considering  belongs to $\EE_{n,l}(\ga)$ for some $\ga \geq \ga_B/2 - 4 K \al \geq \ga_I$. 

  Therefore we may have to delete more parameters, that return too often and too deep, in order to restore the Lyapunov exponent for the remaining parameters. This is handled in the section about large deviations. The large deviation argument estimates the set of those parameters that spend a too large portion of the time in $[n,2n]$ reaching escape positions. Since the escape period is set to zero for shallow returns, i.e. for returns into $U \sm U^2$, the orbits $\xi_{n,l}(a)$ outside $U^2$ can be considered as free periods. Using Lemma \ref{oel} for this neighbourhood $U^2$ also gives uniform expansion until the next return (let us use the same exponent $\ga_H > 0$ for those free periods). Since each bound period for a deep return into $U^2$ is contained in an escape period, we now consider those bound periods $\ti{p}_j$ in $[n,2n]$ and the corresponding free periods $\ti{L}_j$ outside $U^2$. If the parameter $a$ is such that $T_{n}(a) \leq \tau n$ where $0 < \tau < 1$ then 
  \begin{equation} \label{exp-restorage}
  |Df^{2n}(v_l(a))| \geq C_0 e^{\ga_B n} e^{\sum_j \ti{p}_j (\ga/(4K))} e^{\ga_H \ti{L}_j} \geq C_0 e^{\ga_B n} e^{(1-\tau) n \ga_H }.
\end{equation}
According to the definition, $\ga_B = (3/4) (1-\tau) \min(\ga_H,\ga_0)$, and hence the Lyapunov exponent is restored: 
\[
|Df^{2n}(v_l(a))| C_0 \geq  e^{\ga_B 2n}.
  \]

Let us now turn to the general case where we use induction. Assume that we have constructed $\Om_l(n)$ for every $l$ and that the sets $\Om_l(n)$ are ``good'' in the following sense. We assume that each partition element $\om \subset \Om_l(n)$ belongs to $\EE_{n,l}(\ga_B) \cap \BB_{n,l}$, i.e. $\Om_l(n) \subset  \EE_{n,l}(\ga_B) \cap \BB_{n,l}$. The sets $\Om_l(n)$ have their own structure and should not be mixed until at the very end, because the partition elements in each such set may differ, and and intersection therefore can destroy these elements. 

%STARTA SEDAN HUVUDINDUKTION HÄR.
  
For simplicity assume that $\nu=n$ is a return time for $l$. By definition of $\EE_{\nu,l}(\ga)$ and $\BB_{\nu,l}$, we can use the binding information for all critical point up until time $(2K \al/\ga_I) \nu  = \hat{\al} \nu$. First let us from $\Om_l(n)$ delete parameters so that we can use the binding information of all other critical points $j \neq l$  for a longer time, in the next time window $[2n,4n]$, i.e. we want to consider $\EE_{\nu,l,\star}(\ga) \cap \BB_{\nu,l,\star}$. The point is now that the partition elements we are deleting, i.e. parameters belonging to $(\EE_{\nu,l,\star}(\ga) \cap \BB_{\nu,l,\star}) \sm (\EE_{\nu,l}(\ga) \cap \BB_{\nu,l})$,  by this procedure are much larger than the partition elements in $\Om_l(n)$ (this was originally observed by M. Benedicks). Indeed, if we look at the length of  $\xi_{m,j}(\om_1)$ where $\om_1$ is a partition element that got deleted at some time (return) $m \leq 2 \hat{\al} n$ then, by Lemma \ref{transv1}, 
\[
\diam(\xi_{m,j}(\om_1))\sim |\om_1||Df^m(v_j(a))| \leq |\om_1|e^{\Ga m}.
\]
By the basic assumption, and since $\om_1$ got deleted at time $m$, we have
\[
  \diam(\xi_{m,j}(\om_1)) \sim \dist(\xi_{m,j}(\om_1),Crit_{\om_1}) / (\log (\dist (\xi_{m,j}(\om_1),Crit_{\om_1})))^2 \geq e^{-3 \al m},
\]
so
\[
e^{-3\al m} \leq C \diam(\xi_{m,j}(\om_1)) \leq C |\om_1|e^{\Ga m}.
\]

On the other hand, the partition elements at time $n$ or higher, are much smaller. This can be seen as follows. Let $\om_2$ be a partition element at time $n$. Since $\diam(\xi_{n,j}(\om_2)) \leq S$, we have
\[
S \geq \diam(\xi_{n,j}(\om_2)) \sim |\om_2| |Df^n(v_j(a))| \geq |\om_2| C_0 e^{\ga n}.
\]
Therefore, since $m \leq 2 \hat{\al} n$,
\[
\frac{|\om_1|}{|\om_2|} \geq C \frac{e^{-(3\al + \Ga)m}}{e^{-\ga n}} \geq C e^{ (\ga - 2 \hat{\al} (3 \al + \Ga) )n} \gg 1.
\]
Hence $\om_2$ is much smaller than $\om_1$. This means that when deleting partition elements in $\Om_l(n)$ that do not satisfy the basic approach rate condition until time $2\hat{\al}n$ for other critical points $j \neq l$, in the time window $[\hat{\al}n, 2\hat{\al}n]$, we do not destroy the partition elements; we only delete whole partition elements of the type $\om_2 \in \Om_l(n)$ that intersect partition elements of the type $\om_1$ that was deleted at the time scale $\sim \hat{\al}n$. 

Starting from the partition elements in $\Om_l(n) \subset \EE_{n,l}(\ga) \cap \BB_{n,l} $ and passing to $ \Om_l(n,\star) \subset \EE_{\nu,l,\star}(\ga) \cap \BB_{\nu,l,\star}$ is therefore harmless and the measure deleted is
\begin{equation} \label{star-measure}
|\Om_l(n,\star)| \geq (1-C e^{-\al \hat{\al}n}) |\Om_l(n)|. 
\end{equation}
We have now constructed $\Om_l(n,\star)$ and want to pass to $\Om_l(2n) \subset \EE_{2n,l}(\ga) \cap \BB_{2n,l}$. 
Passing from $\Om_l(n,\star)$ to $\Om_l(2n)$, we have to delete parameters that do not satisfy the basic approach rate condition for critical point $c_l$ and also delete those parameters that have too many too deep returns in $[n,2n]$. We have seen by equation (\ref{exp-degradation}), that the Lyapunov exponent can decrease to $(1/2) \ga_B > \ga_I$ during the period form $n$ to $2n$. We also have to take into account the blind escapes, see Remark \ref{blind-escapes}, which constitute a small portion, $\leq Ce^{-qn}$, of the original set of parameters. For those parameters whose escape periods transcend into $[2n,4n]$ (these are the escape periods for the last return $\nu_s$ discussed in Remark \ref{blind-escapes}), the Lyapunov exponent may drop slightly below $\ga_B/2$, but never below $\ga_I$ (if $\al$ is sufficiently small, see the condition on p. 4). By Lemmas \ref{ba-measure} and \ref{escape-measure} we get the estimate
\[
  |\Om_l(2n)| \geq (1-e^{-\al n})  (1- Ce^{-(\th \tau - \vep_2)n} )(1-Ce^{-qn}) |\Om_l(n,\star)|. 
\]

Together with (\ref{star-measure}), we get, for some $\be > 0$, 
\begin{align}
  |\Om_l(2n)|  &\geq  (1-e^{-\al n})  (1- Ce^{-(\th \tau - \vep_2)n})(1-Ce^{-qn}) (1-C e^{-\al \hat{\al}n})    |\Om_l(n)| \nonumber \\
  &\geq  (1-e^{-\be n}) |\Om_l(n)|. \nonumber
  \end{align}
  It follows that $\Om_l(2n) \subset \EE_{2n,l}(\ga) \cap \BB_{2n,l}$, where $\ga \geq \ga_B$ by the choice of $\ga_B$
%  and the fact that $\ga_B = (3/4) \min(\ga_H,\ga_0) (1-\tau)$
  (possibly, if $n$ is just after a return time, $\ga \geq \ga_B - 4 K \al$). We are then back to the same situation at time $2n$ as we were for time $n$ and the induction argument goes on forever.  

Let $M \geq 2$. Choosing the constants correctly, in this way we construct, for each critical point, a set $\Om_l(n) \subset \EE_{n,l}(\ga_B- 4 K \al) \cap \BB_{n,l}$ with measure at least $(1-1/(2Md)) |\om_0|$, that holds for $n > 0$, where $d$ is the degree of $f$. Passing to the limit, as $n \raw \infty$, we get that the measure of parameters that satisfies the CE-condition for all $n > 0$ is estimated by
    \[
\lim\limits_{n \raw \infty} m \biggl( \bigcap_l \Om_l(n) \biggr) \geq  (1-\frac{1}{M}) |\om_0|. 
      \]
Since $M$ can be chosen arbitrarily large, it follows that $f_0$ is a Lebesgue density point of CE-maps. 

\newpage

\bibliographystyle{plain}
\bibliography{ref}

\begin{thebibliography}{10}

\bibitem{MA-Z}
Magnus Aspenberg.
\newblock {T}he {C}ollet-{E}ckmann condition for rational functions on the
  {R}iemann sphere.
\newblock {\em Math. Z.}, 273:935--980, 2013.

\bibitem{Mih-Gaut-etal}
Matthieu Astorg, Thomas Gauthier, Nicolae Mihalache, and Gabriel Vigny.
\newblock Collet, {E}ckmann and the bifurcation measure.
\newblock {\em Invent. Math.}, 217(3):749--797, 2019.

\bibitem{AM}
Artur Avila and Carlos~Gustavo Moreira.
\newblock Statistical properties of unimodal maps: the quadratic family.
\newblock {\em Ann. of Math. (2)}, 161(2):831--881, 2005.

\bibitem{BC1}
Michael Benedicks and Lennart Carleson.
\newblock On iterations of {$1-ax\sp 2$} on {$(-1,1)$}.
\newblock {\em Ann. of Math. (2)}, 122(1):1--25, 1985.

\bibitem{BC2}
Michael Benedicks and Lennart Carleson.
\newblock The dynamics of the {H}\'enon map.
\newblock {\em Ann. of Math. (2)}, 133(1):73--169, 1991.

\bibitem{Benedicks-Misse}
Michael Benedicks and Micha\l Misiurewicz.
\newblock Absolutely continuous invariant measures for maps with flat tops.
\newblock {\em Inst. Hautes \'{E}tudes Sci. Publ. Math.}, (69):203--213, 1989.

\bibitem{BSS}
Henk Bruin, Weixiao Shen, and Sebastian van Strien.
\newblock Invariant measures exist without a growth condition.
\newblock {\em Comm. Math. Phys.}, 241(2-3):287--306, 2003.

\bibitem{Mats-2}
{M}ats {B}ylund.
\newblock Equivalence of {C}ollet-{E}ckmann conditions for slowly recurrent
  rational maps.
\newblock Manuscript in preparation and personal communication.

\bibitem{Collet-Eckmann1}
P.~Collet and J.-P. Eckmann.
\newblock On the abundance of aperiodic behaviour for maps on the interval.
\newblock {\em Comm. Math. Phys.}, 73(2):115--160, 1980.

\bibitem{Collet-Eckmann3}
P.~Collet and J.-P. Eckmann.
\newblock Positive {L}iapunov exponents and absolute continuity for maps of the
  interval.
\newblock {\em Ergodic Theory Dynam. Systems}, 3(1):13--46, 1983.

\bibitem{Collet-Eckmann2}
Pierre Collet and Jean-Pierre Eckmann.
\newblock {\em Iterated maps on the interval as dynamical systems}, volume~1 of
  {\em Progress in Physics}.
\newblock Birkh\"{a}user, Boston, Mass., 1980.

\bibitem{Gao-Shen}
Bing Gao and Weixiao Shen.
\newblock Summability implies {C}ollet-{E}ckmann almost surely.
\newblock {\em Ergodic Theory Dynam. Systems}, 34(4):1184--1209, 2014.

\bibitem{GS}
Jacek Graczyk and Stanislav Smirnov.
\newblock Collet, {E}ckmann and {H}\"older.
\newblock {\em Invent. Math.}, 133(1):69--96, 1998.

\bibitem{GS2}
Jacek Graczyk and Stanislav Smirnov.
\newblock Non-uniform hyperbolicity in {C}omplex {D}ynamics.
\newblock {\em Invent. Math.}, 175:335--415, 2009.

\bibitem{GSW-Struc}
Jacek Graczyk and Grzegorz \'Swi\c{a}tek.
\newblock Analytic structures and harmonic measure at bifurcation locus.
\newblock Preprint, arXiv:1904.09434.

\bibitem{GSW}
Jacek Graczyk and Grzegorz {\'S}wi\c{a}tek.
\newblock Generic hyperbolicity in the logistic family.
\newblock {\em Ann. of Math. (2)}, 146(1):1--52, 1997.

\bibitem{GSW-Fine}
Jacek Graczyk and Grzegorz \'{S}wi\c{a}tek.
\newblock Fine structure of connectedness loci.
\newblock {\em Math. Ann.}, 369(1-2):49--108, 2017.

\bibitem{MJ}
Michael~V. Jakobson.
\newblock Absolutely continuous invariant measures for one-parameter families
  of one-dimensional maps.
\newblock {\em Comm. Math. Phys.}, 81(1):39--88, 1981.

\bibitem{Keller}
Gerhard Keller.
\newblock Exponents, attractors and {H}opf decompositions for interval maps.
\newblock {\em Ergodic Theory Dynam. Systems}, 10(4):717--744, 1990.

\bibitem{SSK}
Oleg Kozlovski, Weixiao Shen, and Sebastian van Strien.
\newblock Density of hyperbolicity in dimension one.
\newblock {\em Ann. of Math. (2)}, 166(1):145--182, 2007.

\bibitem{Levin-multipliers}
Genadi Levin.
\newblock Multipliers of periodic orbits in spaces of rational maps.
\newblock {\em Ergodic Theory Dynam. Systems}, 31(1):197--243, 2011.

\bibitem{Levin-book}
Genadi Levin.
\newblock Perturbations of weakly expanding critical orbits.
\newblock In {\em Frontiers in complex dynamics}, volume~51 of {\em Princeton
  Math. Ser.}, pages 163--196. Princeton Univ. Press, Princeton, NJ, 2014.

\bibitem{Levin-transv}
Genadi Levin.
\newblock Fixed points of the {R}uelle-{T}hurston operator and the {C}auchy
  transform.
\newblock {\em Fund. Math.}, 254(1):49--67, 2021.

\bibitem{Le-Pr-Sh}
Genadi Levin, Feliks Przytycki, and Weixiao Shen.
\newblock The {L}yapunov exponent of holomorphic maps.
\newblock {\em Invent. Math.}, 205(2):363--382, 2016.

\bibitem{ML2}
Mikhail Lyubich.
\newblock Dynamics of quadratic polynomials. {I}, {II}.
\newblock {\em Acta Math.}, 178(2):185--247, 247--297, 1997.

\bibitem{ML}
Mikhail Lyubich.
\newblock Almost every real quadratic map is either regular or stochastic.
\newblock {\em Ann. of Math. (2)}, 156(1):1--78, 2002.

\bibitem{Nowicki-Strien}
Tomasz Nowicki and Sebastian van Strien.
\newblock Invariant measures exist under a summability condition for unimodal
  maps.
\newblock {\em Invent. Math.}, 105(1):123--136, 1991.

\bibitem{Misse-IHES}
Micha\l \phantom{1}Misiurewicz.
\newblock Absolutely continuous measures for certain maps of an interval.
\newblock {\em Inst. Hautes \'{E}tudes Sci. Publ. Math.}, (53):17--51, 1981.

\bibitem{FP2}
Feliks Przytycki.
\newblock Iterations of holomorphic {C}ollet-{E}ckmann maps: conformal and
  invariant measures. {A}ppendix: on non-renormalizable quadratic polynomials.
\newblock {\em Trans. Amer. Math. Soc.}, 350(2):717--742, 1998.

\bibitem{FP3}
Feliks Przytycki.
\newblock H\"older implies {C}ollet-{E}ckmann.
\newblock {\em Ast\'erisque}, (261):xiv, 385--403, 2000.
\newblock G\'eom\'etrie complexe et syst\`emes dynamiques (Orsay, 1995).

\bibitem{PRS}
Feliks Przytycki, Juan Rivera-Letelier, and Stanislav Smirnov.
\newblock Equivalence and topological invariance of conditions for non-uniform
  hyperbolicity in the iteration of rational maps.
\newblock {\em Invent. Math.}, 151(1):29--63, 2003.

\bibitem{MR}
Mary Rees.
\newblock Positive measure sets of ergodic rational maps.
\newblock {\em Ann. Sci. \'Ecole Norm. Sup. (4)}, 19(3):383--407, 1986.

\bibitem{Tsuji}
Masato Tsujii.
\newblock Positive {L}yapunov exponents in families of one-dimensional
  dynamical systems.
\newblock {\em Invent. Math.}, 111(1):113--137, 1993.

\end{thebibliography}

\end{document}